\documentclass{amsart}
\usepackage[utf8]{inputenc}
\usepackage{amssymb}
\usepackage{hyperref}
\usepackage[final]{showkeys} 

\input xy
\xyoption{all}

\theoremstyle{definition}
\newtheorem{mydef}{Definition}[section]
\newtheorem{lem}[mydef]{Lemma}
\newtheorem{thm}[mydef]{Theorem}
\newtheorem{cor}[mydef]{Corollary}

\newtheorem{hypothesis}[mydef]{Hypothesis}
\newtheorem{prop}[mydef]{Proposition}
\newtheorem{defin}[mydef]{Definition}

\newtheorem{remark}[mydef]{Remark}
\newtheorem{notation}[mydef]{Notation}
\newtheorem{fact}[mydef]{Fact}

\newcommand{\fct}[2]{{}^{#1}#2}



\newcommand{\ba}{\bar{a}}
\newcommand{\bb}{\bar{b}}
\newcommand{\bc}{\bar{c}}

\newcommand{\bx}{\bar{x}}
\newcommand{\by}{\bar{y}}

\newcommand{\bigN}{\widehat{N}}

\newcommand{\bigL}{\widehat{L}}
\newcommand{\bigK}{\widehat{K}}
\newcommand{\bigKp}[1]{\widehat{K}^{<#1}}

\newcommand{\Ksatpp}[2]{{#1}^{#2\text{-sat}}}
\newcommand{\Ksatp}[1]{\Ksatpp{K}{#1}}
\newcommand{\Ksat}{\Ksatp{\lambda}}


\newcommand{\sea}{\mathfrak{C}}
\newcommand{\dom}[1]{\text{dom}(#1)}

\newcommand{\cf}[1]{\text{cf} (#1)}
\newcommand{\seq}[1]{\langle #1 \rangle}
\newcommand{\rest}{\upharpoonright}
\newcommand{\bkappa}{\bar \kappa}

\newcommand{\is}{\mathfrak{i}}




\def\lta{<}
\def\lea{\le}

\def\gea{\ge}


\def\ltu{\lta_{\text{univ}}}

\newcommand{\K}{K}



\newbox\noforkbox \newdimen\forklinewidth
\forklinewidth=0.3pt \setbox0\hbox{$\textstyle\smile$}
\setbox1\hbox to \wd0{\hfil\vrule width \forklinewidth depth-2pt
 height 10pt \hfil}
\wd1=0 cm \setbox\noforkbox\hbox{\lower 2pt\box1\lower
2pt\box0\relax}
\def\unionstick{\mathop{\copy\noforkbox}\limits}
\newcommand{\nf}{\unionstick}

\newcommand{\nfp}[1]{#1\text{-nf}}
\newcommand{\lambdanf}{\nfp{\lambda}}
\newcommand{\chinf}{\nfp{\chi}}

\def\1nf{\unionstick^{(1)}}

\def\2nf{\unionstick^{(2)}}
\def\3nf{\unionstick^{(3)}}

\def\nfm{\unionstick^{\ast}}

\newcommand{\tp}{\text{tp}}
\newcommand{\gtp}{\text{gtp}}
\newcommand{\Ss}{S}
\newcommand{\gS}{\text{gS}}

\newcommand{\hanf}[1]{h (#1)}
\newcommand{\hanfs}[1]{h^\ast (#1)}


\newcommand{\LS}{\text{LS}}

\newcommand{\plus}[1]{{#1}_r}
\newcommand{\kappap}{\plus{\kappa}}
\newcommand{\BI}{\mathbf{I}}
\newcommand{\BJ}{\mathbf{J}}

\newcommand{\Av}{\text{Av}}

\newcommand{\slc}[1]{\bkappa_{#1}}

\newcommand{\slcp}[1]{\slc{#1}^1}

\newcommand{\clc}[1]{\kappa_{#1}}



\title{Chains of saturated models in AECs}
\date{\today\\
AMS 2010 Subject Classification: Primary 03C48. Secondary: 03C47, 03C52, 03C55, 03E55.}
\keywords{Abstract elementary classes; Forking; Independence calculus; Classification theory; Stability; Superstability; Tameness; Saturated models; Limit models; Averages; Stability theory inside a model}

\parindent 0pt
\parskip 5pt

\setcounter{tocdepth}{1}

\author{Will Boney}
\email{wboney@math.harvard.edu}
\urladdr{http://math.harvard.edu/\textasciitilde wboney/}
\address{Department of Mathematics \\ Harvard University \\ Cambridge, Massachusetts, USA}

\author{Sebastien Vasey}
\email{sebv@cmu.edu}
\urladdr{http://math.cmu.edu/\textasciitilde svasey/}
\address{Department of Mathematical Sciences, Carnegie Mellon University, Pittsburgh, Pennsylvania, USA}

\thanks{This material is based upon work done while the first author was supported by the National Science Foundation under Grant No. DMS-1402191 and the second author was supported by the Swiss National Science Foundation under Grant No.\ 155136.}

\begin{document}

\begin{abstract}
We study when a union of saturated models is saturated in the framework of tame abstract elementary classes (AECs) with amalgamation. We prove:

\begin{thm}
  If $K$ is a tame AEC with amalgamation satisfying a natural definition of superstability (which follows from categoricity in a high-enough cardinal), then for all high-enough $\lambda$:
  
  \begin{enumerate}
    \item The union of an increasing chain of $\lambda$-saturated models is $\lambda$-saturated.
    \item There exists a type-full good $\lambda$-frame with underlying class the saturated models of size $\lambda$.
    \item There exists a unique limit model of size $\lambda$.
  \end{enumerate}
\end{thm}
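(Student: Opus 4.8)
The plan is to isolate part (1) as the main technical statement and then read off (2) and (3) from it together with standard frame-theoretic bookkeeping. The engine for (1) will be a theory of \emph{averages} for the tame superstable class, in the spirit of Shelah's stability theory inside a model but phrased for Galois types: for a sufficiently long sequence $\BI$ with bounded splitting and a set $A$, one defines $\Av(\BI, A)$ as the Galois type whose small restrictions agree with those realized by a tail of $\BI$, and one shows (using tameness to pass between Galois types and their $\LS(K)$-sized restrictions, and superstability to bound the splitting) that $\Av(\BI, A)$ is a well-defined member of $\gS(A)$. The crucial input from superstability is a local character statement: every Galois type over a model does not split over a submodel of bounded size, and along any $\prec$-increasing chain whose successors are universal extensions the splitting must stabilize (no long splitting chains).

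For (1), let $\langle M_i : i < \delta\rangle$ be $\prec$-increasing with each $M_i$ being $\lambda$-saturated and $M := \bigcup_{i<\delta} M_i$; the case $\cf{\delta} \geq \lambda$ is trivial, since then every subset of $M$ of size $< \lambda$ already lies in a single $M_i$, so assume $\cf{\delta} < \lambda$ (reducing, if convenient, to saturated models of size $\lambda$, where universality of successors over predecessors is clear). Given $p \in \gS(A)$ with $A \subseteq M$ and $|A| < \lambda$, extend it to $\hat p \in \gS(M)$ using stability. Because each $M_{i+1}$ is universal over $M_i$, the no-long-splitting-chains property yields $i_0 < \delta$ with $\hat p$ not splitting over $M_{i_0}$, and the average representation writes $\hat p = \Av(\BI, M)$ for an indiscernible $\BI \subseteq M_{i_0}$ based on a set of size $< \lambda$. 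Now realize the average of $\BI$ over that bounded base \emph{inside} $M_{i_0}$ — a legitimate demand on the $\lambda$-saturated model $M_{i_0}$, since it is a type over a set of size $< \lambda$ — obtaining $c \in M_{i_0} \subseteq M$; nonsplitting over $M_{i_0}$ (equivalently, that $\BI{}^\frown c$ remains indiscernible) forces $\gtp(c/M) = \hat p$, so $c$ realizes $p$. Thus $M$ is $\lambda$-saturated. The crux, and the step I expect to be hardest, is the average representation: showing every type over a saturated model is the average of a bounded indiscernible sequence it contains, which is exactly where tameness and superstability are both indispensable.

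Granting (1), the good $\lambda$-frame in (2) is built by taking the underlying class to be the saturated models of size $\lambda$ and the basic nonforking relation to be the type-full closure of nonsplitting over these models. Stability in $\lambda$ gives existence and extension of nonforking types; superstability plus tameness give uniqueness, local character, and symmetry; and the one axiom that does not come for free is continuity along increasing chains, namely that the union of a chain in the class is again in the class and that nonforking is preserved through the union — this is precisely the content of (1). One should still check that the frame is type-full (every nonalgebraic type is basic) and that the class of saturated models is closed under the required unions, both immediate from (1) and stability.

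Finally, (3) follows from (1) by the usual reduction of limit models to saturated ones. A $(\mu,\lambda)$-limit model is the union of a $\prec$-increasing chain of length $\mu$ whose successors are universal; when $\mu$ is a successor the limit model is saturated by a direct back-and-forth, and when $\mu$ is a limit it is a union of saturated limit models and hence saturated by (1). Thus every limit model of size $\lambda$ is $\lambda$-saturated, and since amalgamation and stability guarantee that any two saturated models of the same size are isomorphic, all limit models of size $\lambda$ are isomorphic, giving uniqueness. I expect the genuinely hard work to remain concentrated in the average machinery underlying (1); once that is in place, (2) and (3) assemble from it together with standard consequences of stability and tameness.
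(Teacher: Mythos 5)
Your architecture mirrors the paper's — part (1) is the engine, proved via averages and nonsplitting local character, with (2) and (3) assembled from (1) plus known frame technology (the paper likewise cites prior work for the frame axioms beyond closure of $\Ksatp{\lambda}$ under unions) — but the central steps of your proof of (1) have genuine gaps. First, you invoke no-long-splitting-chains for the chain $\seq{M_i : i < \delta}$ on the grounds that "each $M_{i+1}$ is universal over $M_i$." That is not available: superstability (Definition \ref{ss-def}) gives local character only along $\leu$-increasing chains in $K_\mu$, whereas the $M_i$ form an \emph{arbitrary} increasing chain of $\lambda$-saturated models (possibly of distinct cardinalities), and a $\lambda$-saturated extension is not known, at this stage, to be universal over a $\lea$-submodel of the same cardinality — that universality is itself a consequence of the limit-model/frame machinery being established, so the step is circular. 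The paper needs a genuine upgrade here: Fact \ref{ns-weakly-good} (quoted from \cite{indep-aec-v4}) supplies an independence relation on saturated models with $\clc{1} = \aleph_0$ computed along arbitrary increasing chains, not just universal ones. Second, your realization step fails as stated: you pick $c \in M_{i_0}$ realizing the average over the bounded base by saturation, then claim nonsplitting "forces" $\gtp (c / M) = \hat p$. But realizing a restriction over a small set does not make the \emph{full} type $\gtp (c / M)$ nonsplitting over that set, so the uniqueness lemma for nonsplitting extensions cannot be applied to $c$. The paper avoids this by proving $\Av (\BI / M_\delta) = q$ outright (convergence plus based-ness of Morley sequences, Lemmas \ref{morley-based} and \ref{av-eq-monot}) and then realizing $p$ by an element \emph{of the sequence $\BI$ itself} via a pigeonhole count (Lemma \ref{average-realize}). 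Relatedly, for the average even to be a well-defined complete type one needs the convergence theory, which the paper obtains syntactically through the Galois Morleyization; it is unclear that your directly-defined Galois average is complete.

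In (3) there are two further gaps. The constituent models of a $(\lambda, \delta)$-limit chain are merely universal extensions, not saturated, so (1) does not apply to that chain directly; one must first replace it by a $\ltu$-increasing chain of saturated models and run a back-and-forth, as in the paper's proof of Theorem \ref{ss-def-implication}. More seriously, the uniqueness asserted is isomorphism \emph{over the base} $M_0 \in K_\lambda$: "any two saturated models of the same size are isomorphic" yields only a plain isomorphism, and no back-and-forth from saturation alone can be made to fix a base of full cardinality $\lambda$ pointwise. That over-the-base uniqueness is exactly what the good frame of (2) supplies (via Shelah's uniqueness of limit models from a frame, together with the paper's renaming/universality argument), which is why the paper derives (3) from (2) rather than directly from (1); your proof of (3) must do the same.
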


Our proofs use independence calculus and a generalization of averages to this non first-order context.
\end{abstract}

\maketitle

\tableofcontents

\section{Introduction}
Determining when a union of $\lambda$-saturated models is $\lambda$-saturated is an important dividing line in first-order model theory. Recall that Harnik and Shelah have shown:

\begin{fact}[\cite{harniksat}, III.3.11 in \cite{shelahfobook} for the case $\lambda \le |T|$]\label{union-sat-fo} 
  Let $T$ be a first-order theory.
\begin{itemize}
	\item If $T$ is superstable, then any increasing union of $\lambda$-saturated models is $\lambda$-saturated.
	\item If $T$ is stable, then any increasing union of $\lambda$-saturated models of cofinality at least $|T|^+$ is $\lambda$-saturated.
\end{itemize}
\end{fact}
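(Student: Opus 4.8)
The plan is to work inside a monster model $\sea$ and to establish that $M := \bigcup_{i < \delta} M_i$ is $\lambda$-saturated directly: fix $A \subseteq M$ with $|A| < \lambda$ and a complete type $p \in \Ss(A)$, and produce a realization of $p$ \emph{inside} $M$. The entire argument runs on the forking calculus of the stable theory $T$, and the superstability (resp.\ cofinality) hypothesis is used at exactly one point, to locate a forking base of $p$ inside a single member of the chain. I would treat the two bullets uniformly, the only difference being the bound on the size of that base. Throughout I assume $\lambda \geq |T|^+$, which is Harnik's case; the residual case $\lambda \leq |T|$ is the content of the cited Theorem~III.3.11 of \cite{shelahfobook} and needs the finer apparatus of $\mathbf{F}^a_\kappa$-isolation, so I would simply invoke it there.

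\textbf{Reduction to a stationary base.} By local character of forking there is $A_0 \subseteq A$ with $|A_0| < \kappa(T)$ over which $p$ does not fork, where $\kappa(T) = \aleph_0$ when $T$ is superstable and $\kappa(T) \leq |T|^+$ when $T$ is merely stable. The decisive step is that $A_0$ lies in a single $M_{i^*}$: if $T$ is superstable then $A_0$ is finite and any finite subset of the chain is captured at a bounded stage regardless of $\cf{\delta}$; if $T$ is stable then $|A_0| \leq |T| < \cf{\delta}$ (using $\cf{\delta} \geq |T|^+$), so the indices needed for the elements of $A_0$ are bounded below $\delta$. Inside $M_{i^*}$ I choose, by L\"owenheim--Skolem, a model $N$ with $A_0 \subseteq N \prec M_{i^*}$ and $|N| = |A_0| + |T| < \lambda$. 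Replacing $p$ by its nonforking extension to $A \cup N$ (still of size $<\lambda$, still contained in $M$, still realizing the original type, and still nonforking over $A_0 \subseteq N$), I may assume $N \subseteq A$ and that $p$ does not fork over $N$. Since $N$ is a model, $q := p \rest N$ is stationary and $p$ is its unique nonforking extension to $A$.

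\textbf{Realizing $p$ inside $M_{i^*}$.} The point is that a realization can already be found in $M_{i^*}$, so no limit along the chain is needed. Using that $M_{i^*}$ is $\lambda$-saturated and $|N| < \lambda$, I build a Morley sequence $\langle e_\alpha : \alpha < \lambda \rangle$ for $q$ over $N$ with every $e_\alpha \in M_{i^*}$, realizing at each stage the nonforking extension of $q$ over $N \cup \{e_\beta : \beta < \alpha\}$, a type over a set of size $< \lambda$. By local character applied to the $(<\lambda)$-tuple $A$, the type of $A$ over $N \cup \{e_\alpha : \alpha < \lambda\}$ does not fork over $N \cup \{e_\alpha : \alpha \in w\}$ for some $w$ with $|w| \leq |A| + |T| < \lambda$. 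Picking any $\beta \in \lambda \setminus w$, symmetry gives that $e_\beta$ is independent from $A$ over $N \cup \{e_\alpha : \alpha \in w\}$, while independence of the Morley sequence gives that $e_\beta$ is independent from $\{e_\alpha : \alpha \in w\}$ over $N$; transitivity then yields that $e_\beta$ is independent from $A$ over $N$. As $e_\beta$ realizes $q$, stationarity forces $\tp(e_\beta / A) = p$, and $e_\beta \in M_{i^*} \subseteq M$ is the desired realization.

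\textbf{Main obstacle.} The delicate point is not the assembly of the realization but the single-model step: forcing the forking base into one $M_{i^*}$ is precisely where the hypotheses bite, and it is exactly this that can fail for a stable theory at cofinality $\leq |T|$, which is why the dividing line appears. The superstable case needs no cofinality restriction because a finite base is always captured at a bounded stage, whereas in the stable case one must spend $\cf{\delta} \geq |T|^+$ to capture a base of size up to $|T|$. The secondary technical obligations---that $|N|$, $|w|$, and each intermediate domain remain below $\lambda$---are what pin the argument to $\lambda \geq |T|^+$ and force the appeal to \cite{shelahfobook} for $\lambda \leq |T|$.
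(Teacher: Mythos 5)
Your proposal is correct, but note that the paper does not prove this Fact at all: it is quoted from the literature, with the case $\lambda \ge |T|^+$ attributed to Harnik \cite{harniksat} and the case $\lambda \le |T|$ to Theorem III.3.11 of \cite{shelahfobook} --- precisely the division of labor you adopt, so your appeal to Shelah for the residual case is exactly what the bracketed attribution intends. Your argument for the main case is the standard forking-calculus proof and it checks out: local character puts the base $A_0$ (finite if superstable, of size $\le |T|$ if stable with $\cf{\delta} \ge |T|^+$) inside a single $M_{i^*}$; the passage to a model $N$ and to the nonforking extension over $A \cup N$ is legitimate by existence, transitivity, and base monotonicity; the Morley sequence of length $\lambda$ fits inside $M_{i^*}$ by saturation (the domains at each stage have size $<\lambda$ even for singular $\lambda$, since a sum of two cardinals below $\lambda$ stays below $\lambda$); and the extraction of $e_\beta$ with $\beta \notin w$ via finite character (which gives the bound $|w| \le |A| + |T| < \lambda$), symmetry, independence of the Morley sequence as a \emph{set}, transitivity, and stationarity of $q = p \rest N$ is sound. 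Two cosmetic points: you should dispose of the trivially algebraic case of $q$ (its realizations already lie in $N \subseteq M$), and your Morley-sequence-plus-exceptional-set device is really the forking-theoretic rendition of the average argument --- the element off the small exceptional set realizes $\Av(\BI/A)$, which for a Morley sequence in $q$ is the nonforking extension of $q$. This is consonant with the paper's own remark that the classical proof ``relies on the heavy machinery of forking and averages,'' and your single-model localization step (capture $A_0$, then realize inside $M_{i^*}$) is also the same mechanism the paper generalizes in its Section 3 following Makkai--Shelah, where the array construction substitutes for your Morley sequence because stationarity over models is unavailable in the AEC setting.
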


A converse was later proven by Albert and Grossberg \cite[Theorem 13]{agchains}. Fact \ref{union-sat-fo} can be used to prove:

\begin{fact}[The saturation spectrum theorem, VIII.4.7 in \cite{shelahfobook}]\label{sat-spectrum}
  Let $T$ be a stable first-order theory. Then $T$ has a saturated model of size $\lambda$ if and only if [$T$ is stable in $\lambda$ or $\lambda = \lambda^{<\lambda} + |D (T)|$].
\end{fact}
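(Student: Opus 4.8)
The plan is to prove the two implications separately, reading the right-hand side as the disjunction of $(\ast)$ ``$T$ is stable in $\lambda$'' and $(\ast\ast)$ ``$\lambda = \lambda^{<\lambda} + |D(T)|$''.

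\textbf{Sufficiency under $(\ast\ast)$.} Here $\lambda$ is regular, $2^{<\lambda} = \lambda$, and $|D(T)| \le \lambda$, and I would build a saturated model of size $\lambda$ by the classical elementary-chain construction, needing no stability. Build an increasing continuous chain $\langle M_\alpha : \alpha < \lambda\rangle$ of models of size $\lambda$ with a bookkeeping function, arranging at each successor stage that one more type over a subset of size $<\lambda$ of the model built so far is realized. The key counting point is that there are only $\lambda^{<\lambda} = \lambda$ subsets $A$ of size $<\lambda$, and over each such $A$ there are at most $2^{|A|} + |D(T)| \le 2^{<\lambda} + |D(T)| = \lambda$ complete types, so the whole task concerns only $\lambda$ types and fits in $\lambda$ steps. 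Regularity guarantees that every subset of size $<\lambda$ of the union already lies in some $M_\alpha$, so by bookkeeping every such type is eventually realized and the union is saturated of size $\lambda$.

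\textbf{Sufficiency under $(\ast)$.} If $\lambda$ is regular the construction is even simpler: build $\langle M_\alpha : \alpha<\lambda\rangle$ increasing continuous of size $\lambda$ with $M_{\alpha+1}$ realizing \emph{all} types over $M_\alpha$; stability in $\lambda$ gives $|S(M_\alpha)| \le \lambda$, so each step adds only $\lambda$ elements, and regularity again places every small subset inside some $M_\alpha$. The interesting case is $\lambda$ \emph{singular}, and this is exactly where Fact~\ref{union-sat-fo} enters. I would write $\lambda = \sum_{i < \cf{\lambda}} \lambda_i$ as an increasing sum of regular cardinals $\lambda_i \ge |D(T)|$ lying in the stability spectrum, use the regular case to produce an increasing chain of saturated models $M_i$ of size $\lambda_i$, and set $M = \bigcup_i M_i$. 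Given $A \subseteq M$ with $|A| < \lambda$, choose $i$ with $\lambda_i > |A|$; each $M_j$ with $j\ge i$ is then $|A|^+$-saturated, so Fact~\ref{union-sat-fo} (the superstable clause, or the stable clause once the cofinality hypothesis $\cf{\lambda} \ge |T|^+$ is checked) makes $M$ itself $|A|^+$-saturated, hence realizing every type over $A$. Thus $M$ is saturated of size $\lambda$.

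\textbf{Necessity.} Suppose $T$ has a saturated model $M$ of size $\lambda$; I must derive $(\ast)$ or $(\ast\ast)$. Two consequences of saturation are immediate: (i) $M$ realizes every type over $\emptyset$, and distinct such types are realized by distinct finite tuples, so $|D(T)| \le |M^{<\omega}| = \lambda$; and (ii) for every $A \subseteq M$ with $|A| < \lambda$, $\lambda$-saturation forces $M$ to realize all of $S(A)$, and since $M$ has only $\lambda$ finite tuples it realizes at most $\lambda$ types, so $|S(A)| \le \lambda$ for all such $A$. Now assume $(\ast)$ fails. Since $|D(T)| \le \lambda$ already holds, the stability spectrum locates the failure in the cardinal-arithmetic clause $\lambda^{<\kappa(T)} = \lambda$; combining this with the bound in (ii) I would argue that the only surviving possibility is $|T| = \lambda$, $\kappa(T) = |T|^+$, and $\lambda^{<\lambda} = \lambda$, which yields $(\ast\ast)$.

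\textbf{Main obstacle.} I expect the delicate point to be this last cardinal-arithmetic step in the necessity direction: converting ``not stable in $\lambda$, yet every type-space over a set of size $<\lambda$ has cardinality $\le\lambda$'' into the precise conclusion $\lambda = \lambda^{<\lambda}$, i.e.\ that $\lambda$ is regular with $2^{<\lambda} = \lambda$. The subtlety is that instability may be witnessed only by a set $A$ of size exactly $\lambda$ (which saturation does not forbid) rather than of size $<\lambda$ (which it does), so one must exploit the exact form of the stability spectrum of $T$ — in particular the value of $\kappa(T)$ relative to $|T|$ and $\lambda$ — to exclude $\lambda^{<\lambda} > \lambda$. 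The singular instance of the sufficiency direction is the other place needing care, but there the work is cleanly packaged by Fact~\ref{union-sat-fo}, modulo verifying its cofinality hypothesis in the non-superstable case.
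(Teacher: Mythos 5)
The paper does not prove this statement at all: it is quoted as a known fact from Shelah's book (Theorem VIII.4.7 of \cite{shelahfobook}), with only the remark that Fact \ref{union-sat-fo} is an ingredient of its proof. So your proposal has to be judged on its own terms. Its overall architecture (regular $\lambda$ by an elementary chain with bookkeeping, singular $\lambda$ via the union-of-saturated-models theorem, necessity by type counting plus the stability spectrum) is the right general shape, and you correctly identify where Fact \ref{union-sat-fo} enters.

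There are, however, genuine gaps. The fatal one is in your singular case of sufficiency under $(\ast)$: the decomposition $\lambda = \sum_{i < \cf{\lambda}} \lambda_i$ with each $\lambda_i$ in the stability spectrum need not exist. Concretely, let $T$ be countable, stable, not superstable (so $\kappa (T) = \aleph_1$), and work in a universe where $2^{\aleph_0} = \aleph_{\omega_1}$; set $\lambda := 2^{\aleph_0}$. Then $\lambda$ is singular of cofinality $\omega_1$ and $T$ \emph{is} stable in $\lambda$ (since $\lambda^{\aleph_0} = \lambda$ and $|D (T)| \le 2^{\aleph_0}$), yet $T$ is stable in \emph{no} $\mu < \lambda$, because $\mu^{\aleph_0} \ge 2^{\aleph_0} > \mu$ for every infinite $\mu < \lambda$; in fact $T$ has no saturated model of any uncountable cardinality below $\lambda$. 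So the increasing chain of smaller saturated models whose union you want to take simply does not exist, and the actual proof of this case has to be organized differently (Shelah works with chains of partially saturated models of full cardinality $\lambda$, and with the union theorem in its sharper form where the cofinality hypothesis is $\cf{\delta} \ge \kappa (T)$ rather than $\ge |T|^+$ --- note that even when your $\lambda_i$ do exist, stability in a singular $\lambda$ only guarantees $\cf{\lambda} \ge \kappa (T)$, which for uncountable $T$ can be $\le |T|$, so the cofinality hypothesis of Fact \ref{union-sat-fo} as stated cannot always be ``checked''). Two further points. First, your counting bound $|S (A)| \le 2^{|A|} + |D (T)|$ in the $(\ast\ast)$ case is not justified; the standard bound is $|S (A)| \le |D (T)|^{|A| + \aleph_0}$ (code $p \in S (A)$ by the map sending each finite tuple $\ba$ from $A$ to the type of $x \ba$ over $\emptyset$), and under $(\ast\ast)$ this still gives $|S (A)| \le \lambda^{<\lambda} = \lambda$, so this gap is repairable. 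Second, the necessity direction --- which you yourself flag as the main obstacle --- is not actually proved: the key sub-claims (that a saturated model of singular cardinality forces stability in that cardinality, and that failure of stability plus saturation forces $\lambda = \lambda^{<\lambda}$) are exactly the hard half of Shelah's theorem, and your sketch identifies the difficulty without overcoming it.
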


Although not immediately evident from the statement, the proof of Fact \ref{union-sat-fo} relies on the heavy machinery of forking and averages.

While the saturation spectrum theorem has been generalized to homogeneous model theory (see \cite[1.13]{sh54} or \cite[5.9]{grle-homog}), to the best of our knowledge no explicit generalization of Fact \ref{union-sat-fo} has been published in this context (Shelah asserts it without proof in \cite[1.15]{sh54}). Grossberg \cite{grossberg91} has proven a version of Fact \ref{union-sat-fo} in the framework of stability theory inside a model. The proof uses averages but relies on a strong negation of the order property. Makkai and Shelah \cite[4.18]{makkaishelah} have given a generalization in the class of models of an $L_{\kappa, \omega}$ sentence where $\kappa$ is a strongly compact cardinal. The proof uses independence calculus. 

One can ask whether Fact \ref{union-sat-fo} can also be generalized to abstract elementary classes (AECs), a general framework for classification theory introduced in \cite{sh88} (see \cite{grossberg2002} for an introduction to AECs). In \cite[I.5.39]{shelahaecbook}, Shelah proves a generalization of the superstable case of Fact \ref{union-sat-fo} to ``definable-enough'' AECs with countable Löwenheim-Skolem number, using the weak continuum hypothesis.

In chapter II of \cite{shelahaecbook}, Shelah starts with a (weakly successful) good $\lambda$-frame (a local notion of superstability) on an abstract elementary class (AEC) $K$ and wants to show that a union of saturated models is saturated in $K_{\lambda^+}$. For this purpose, he introduces a restriction $\le^{\ast}$ of the ordering that allows him to prove the result for $\le^\ast$-increasing chains (II.7.7 there). Restricting the ordering of the AEC is somewhat artificial and one can ask what happens in the general case, and also if $\lambda^+$ is replaced by an arbitrary cardinal. Moreover, Shelah's methods to obtain a weakly successful good $\lambda$-frame typically use categoricity in two successive cardinals and the weak continuum hypothesis\footnote{See for example \cite[II.3.7]{shelahaecbook}. Shelah also shows how to build a good frame in ZFC from more model-theoretic hypotheses in \cite[IV.4.10]{shelahaecbook}, but he has to change the class and it is not clear his frame is weakly successful.}. 

In \cite{sh394}, Shelah had previously proven that a union of $\lambda$-saturated models is $\lambda$-saturated, for $K$ an AEC with amalgamation, joint embedding, and no maximal models categorical in a successor $\lambda' > \lambda$ (see \cite[Chapter 15]{baldwinbook09} for a writeup), but left the case $\lambda \ge \lambda'$ (or $\lambda'$ not a successor) unexamined.

In this paper, we replace the local model-theoretic assumptions of Shelah with \emph{global} ones, including \emph{tameness}, a locality notion for types introduced by Grossberg and VanDieren \cite{tamenessone}. We take advantage of recent developments in the study of forking in tame AECs (especially by Boney and Grossberg \cite{bg-v11-toappear} and Vasey \cite{sv-infinitary-stability-afml, indep-aec-apal}) to generalize Fact \ref{union-sat-fo} to tame abstract elementary classes with amalgamation. Our main result is:

\textbf{Corollary \ref{cor-ss-aec}.}
  Assume $K$ is a $(<\kappa)$-tame AEC with amalgamation. If $\kappa = \beth_\kappa > \LS (K)$ and $K$ is categorical in some cardinal strictly above $\kappa$, then for all $\lambda > 2^\kappa$, $\Ksat$ (the class of $\lambda$-saturated models of $K$) is an AEC with $\LS (\Ksat) = \lambda$.

Notice that if $\Ksat$ is an AEC, then any increasing union of $\lambda$-saturated models is $\lambda$-saturated. Thus, in contrast to Shelah's \cite{sh394} result, we obtain a \emph{global} theorem that holds for all high-enough $\lambda$ and not just those under the categoricity cardinal. Furthermore categoricity at a successor is not assumed. We can also replace the categoricity by various notions of superstability defined in terms of the local character for independence notions such as coheir or splitting. In fact, we can combine this result with the construction of a good frame in \cite{indep-aec-apal} to obtain the theorem in the abstract:

\textbf{Theorem \ref{ss-def-implication}.}
  If $K$ is a tame AEC with amalgamation satisfying a natural definition of superstability (see Definition \ref{ss-def}), then for all high-enough $\lambda$, there exists a \emph{unique} limit model of size $\lambda$.

This proves an eventual version of a statement appearing in early versions of \cite{gvv-mlq} (see the discussion in Section \ref{ss-def-sec}).

It is very convenient to have $\Ksat$ an AEC, as saturated models are typically better behaved than arbitrary ones. This is crucial for example in Shelah's upward transfer of frames in \cite[Chapter II]{shelahaecbook}, and is also used in \cite{indep-aec-apal} to build an $\omega$-successful good frame (and later a global independence notion). We also prove a result for the strictly stable case: 

\textbf{Theorem \ref{aec-average-stable}.}
  Let $K$ be a $\kappa$-tame AEC with amalgamation, $\kappa \ge \LS (K)$, stable in some cardinal above $\kappa$. Then there exists $\chi_0 \le \lambda_0 < \beth_{(2^{\kappa})^+}$ such that whenever $\lambda \ge \lambda_0$ is such that $\mu^{<\chi_0} < \lambda$ for all $\mu < \lambda$, the union of an increasing chain of $\lambda$-saturated models of cofinality at least $\chi_0$ is $\lambda$-saturated.

One caveat here is the introduction of cardinal arithmetic: in stable, first-order theories, all large enough $\lambda$ satisfy that the long-enough union of $\lambda$-saturated models is $\lambda$-saturated (Fact \ref{union-sat-fo}), while here we have to add the condition that $\lambda$ is $\chi_0$-closed.  When dealing with compact classes (or even just $(<\omega)$-tame classes), the map $\lambda \mapsto \lambda^{<\omega}$ that takes the size of a set to the number of formulas with parameters in that set can be used freely.  Even in the work of Makkai and Shelah \cite{makkaishelah}, where $\kappa$ is strongly compact and the class is $(<\kappa)$-tame, this map is $\lambda \mapsto \lambda^{<\kappa}$, which is constant on most cardinals (those with cofinality at least $\kappa$) by a result of Solovay.  However, in our context of $(<\kappa)$-tameness for $\kappa > \omega$ but not strongly compact, this function can be much wilder (following the Galois Morleyization, we view Galois types of size less than $\kappa$ as formulas; see Definition \ref{def-galois-m}).  Thus, we need to introduce assumptions that this map is well-behaved. Using various tricks, we can bypass these assumptions in the superstable case but are unable to do so in the stable case. For example in Theorem \ref{aec-average-stable}, the cardinal arithmetic assumption can be replaced by ``$K$ is stable in $\mu$ for unboundedly many $\mu < \lambda$'', which is always true in case $K$ is superstable.

We use two main methods: The first method is pure independence calculus, relying on a well-behaved independence relation (coheir), whose existence in our context is proven in \cite{bg-v11-toappear, sv-infinitary-stability-afml}. This works well in the superstable case if we define superstability in terms of coheir (called strong superstability in \cite{indep-aec-apal}) but we do not know how to make it work for weaker definitions of superstability (such as superstability defined in terms of splitting, a more classical definition implicit for example in \cite{gvv-mlq}). The second method is the use of syntactic averages, developed by Shelah in \cite[Chapter V]{shelahaecbook2}. We end up proving a result on chains of saturated models in the framework of stability theory inside a model and then translate to AECs using \emph{Galois Morleyization}, introduced in \cite{sv-infinitary-stability-afml}. This method allows us to use superstability defined in terms of splitting. The two methods give incomparable results: in case we know that $\K$ is $(<\kappa)$-tame, with $\kappa = \beth_\kappa > \LS (\K)$, the first gives better Hanf numbers than the second. However if we know that $\K$ is $\LS (\K)$-tame, then we get better bounds using the second method, since we do not need to work above a fixed point of the beth function.

The paper is organized as follows. Section \ref{stable-sec} gives the argument using independence calculus culminating in Theorems \ref{union-sat-closed} and \ref{union-sat-ss}.  Both of these arguments work just using forking relations, drawing inspiration from Makkai and Shelah, rather than the classical first-order argument using averages.  Section \ref{averages-sec} develops averages in our context based on earlier work of Shelah and culminates in the more local Theorem \ref{union-sat-av}.  Section \ref{mainthm-sec} translates the local result to AECs and Section \ref{ss-def-sec} proves consequences such as the uniqueness of limit models from superstability.

This paper was written while the second author was working on a Ph.D.\ thesis under the direction of Rami Grossberg at Carnegie Mellon University. He would like to thank Professor Grossberg for his guidance and assistance in his research in general and in this work specifically. 

\section{Using independence calculus: the stable case} \label{stable-sec}

We assume that the reader is familiar with the basics of AECs, as presented in \cite{baldwinbook09} or \cite{grossbergbook}. We will use the notation from \cite{sv-infinitary-stability-afml}. In particular, $\gtp (\bb / A; N)$ denotes the Galois type of $\bb$ over $A$ as computed in $N$.

We will use the following set-theoretic notation:

\begin{notation}\label{set-thy-notation}
For $\kappa$ an infinite cardinal, write $\kappap$ for the least regular cardinal greater than or equal to $\kappa$. That is, $\kappap$ is $\kappa^+$ if $\kappa$ is singular and $\kappa$ otherwise. Also let $\kappa^-$ be $\kappa$ if $\kappa$ is limit, or the unique $\kappa_0$ such that $\kappa = \kappa_0^+$ if $\kappa$ is a successor. We will often use the ``Hanf function'' (from \cite[Chapter 14]{baldwinbook09}): for $\lambda$ an infinite cardinal, let $\hanf{\lambda} := \beth_{(2^{\lambda})^+}$. Also let $\hanfs{\lambda} := \hanf{\lambda^-}$.
\end{notation}

All throughout this section, we assume:

\begin{hypothesis}\label{hyp-stab-2} \
  \begin{enumerate}
  \item $\K$ is an AEC with amalgamation, joint embedding, and arbitrarily large models. We work inside a monster model $\sea$.
  \item $\LS (\K) < \kappa = \beth_\kappa$.
  \item $\K$ is $(<\kappa)$-tame.
  \item $\K$ is stable (in some cardinal above $\kappa$).
  \end{enumerate}
\end{hypothesis}

We will use the independence notion of coheir for AECs, introduced in \cite{bg-v11-toappear}.

\begin{defin}[Coheir]
  Define a tertiary relation $\nf$ by $\nf (M, A, B)$ if and only if:
  \begin{enumerate}
  \item $M \lea \sea$, $M$ is $\kappa$-saturated, and $A, B \subseteq |\sea|$.
  \item For any $\ba \in \fct{<\kappa}{A}$ and $B_0 \subseteq |M| \cup B$ of size less than $\kappa$, there exists $\ba' \in \fct{<\kappa}{|M|}$ such that $\gtp (\ba / B_0) = \gtp (\ba' / B_0)$ (here, the Galois types are computed inside $\sea$).
  \end{enumerate}

  We write $A \nf_M B$ instead of $\nf (M, A, B)$. We will also say that $\gtp (\ba / B)$ is a \emph{$(<\kappa)$-coheir over $M$} when ${\operatorname{ran} (\ba)} \nf_M B$ (it is straightforward to check that this does not depend on the choice of $\ba$).
\end{defin}

The following locality cardinals will play an important role:

\begin{defin}\label{loc-card-def}
  Let $\alpha$ be a cardinal.
  
  \begin{enumerate}
  \item Let $\slc{\alpha} (\nf)$ be the minimal cardinal $\mu \ge |\alpha|^+ + \kappa^+$ such that for any $M \lea \sea$ that is $\kappa$-saturated, any $A \subseteq |\sea|$ with $|A| \le \alpha$, there exists $M_0 \lea M$ in $K_{<\mu}$ with $A \nf_{M_0} M$. When $\mu$ does not exist, we set $\slc{\alpha_0} (\is) = \infty$.

  \item Let $\clc{\alpha} (\nf)$ be the minimal cardinal $\mu \ge |\alpha|^+ + \aleph_0$ such that for any regular $\delta \ge \mu$, any increasing chain $\seq{M_i : i < \delta}$ in $K$ and any $A$ of size at most $\alpha$, there exists $i < \delta$ such that $A \nf_{M_i} \bigcup_{i < \delta} M_i$. When $\mu$ does not exist, we set $\clc{\alpha_0} (\is) = \infty$. For $\K^\ast$ a subclass of $\K$, we similarly define $\clc{\alpha} (\nf \rest K^\ast)$, where in addition we require that $M_i \in \K^\ast$ for all $i < \delta$ (we will use this when $\K^\ast$ is a class of saturated models).
  \end{enumerate}
\end{defin}
\begin{remark}\label{loc-card-rmk}
  For any cardinal $\alpha$, we always have that $\clc{\alpha} (\nf) \le \slc{\alpha} (\nf)$.
\end{remark}

\begin{fact}\label{coheir-props-fact}
  Under Hypothesis \ref{hyp-stab-2}, $\nf$ satisfies the following properties:

  \begin{enumerate}
  \item \underline{Invariance}: If $f$ is an automorphism of $\sea$ and $A \nf_M B$, then $f[A] \nf_{f[M]} B$.
  \item \underline{Monotonicity}: Assume $A \nf_M B$, then:
    \begin{enumerate}
    \item Left and right monotonicity: If $A_0 \subseteq A$, $B_0 \subseteq B$, then $A_0 \nf_M B_0$.
    \item Base monotonicity: If $M \lea M' \lea \sea$, $|M'| \subseteq B$, and $M'$ is $\kappa$-saturated, then $A \nf_{M'} B$.
    \end{enumerate}
  \item \underline{Left and right normality}: If $A \nf_M B$, then $A M \nf_M BM$.
  \item \underline{Symmetry}: $A \nf_M B$ if and only if $B \nf_M A$.
  \item \underline{Strong transitivity}: If $M_0 \lea \sea$, $M_1 \lea \sea$, $A \nf_{M_0} M_1$, and $A \nf_{M_1} B$, then $A \nf_{M_0} B$ (note that we do \emph{not} assume that $M_0 \lea M_1$).
  \item \underline{Uniqueness for types of length one}: If $M \lea M'$, $p, q \in \gS (M')$ are both $(<\kappa)$-coheir over $M$ and $p \rest M = q \rest M$, then $p = q$.
  \item \underline{Set local character}: For any $\alpha$, $\slc{\alpha} (\nf) \le \left((\alpha + 2)^{<\kappap}\right)^+$.
  \end{enumerate}

  Moreover $\K$ is stable in all $\mu \ge \kappa$ such that $\mu = \mu^{2^{<\kappap}}$.
\end{fact}
\begin{remark}
  We will not use the exact definition of coheir, just that it satisfies the conclusion of Fact \ref{coheir-props-fact}. 
\end{remark}
\begin{remark}
  Strong transitivity will be used in the proof that the relation $\nfm$ (Definition \ref{nfm-def}) is transitive, see Proposition \ref{nfm-basic}. We do not know if transitivity would suffice.
\end{remark}

For what comes next, it will be convenient if we could say that $A \nf_M B$ and $M \lea N$ implies $A \nf_N B$. By base monotonicity, this holds if $|N| \subseteq B$ but in general this is not part of our assumptions (and in practice this need not hold). Thus we will close $\nf$ under this property. This is where we depart from \cite{makkaishelah}; there the authors used that the singular cardinal hypothesis holds above a strongly compact to prove the result corresponding to our Lemma \ref{local-character-finite}. Here we need to be more clever.

\begin{defin}\label{nfm-def}
  $A \nfm_C B$ means that there exists $M_0 \lea \sea$, $|M_0| \subseteq C$ such that $A \nf_{M_0} B$.
\end{defin}
\begin{remark}
  $\nfm$ need not satisfy the normality property from Fact \ref{coheir-props-fact}.
\end{remark}

In what follows,  we will apply the definition of $\clc{\alpha}$ and $\slc{\alpha}$ (Definition \ref{loc-card-def}) to other independence relations than coheir.

\begin{defin}
  We write $A \nfm_C [B]^1$ to mean that $A \nfm_C b$ for all $b \in B$. Similarly define $[A]^1 \nfm_C B$. Let $(\nfm)^1$ denote the relation defined by $A (\nfm_C)^1 B$ if and only if $A \nfm_C [B]^1$. For $\alpha$ a cardinal, let $\slcp{\alpha} = \slcp{\alpha} (\nfm) := \slc{\alpha} ((\nfm)^1)$.
\end{defin}

Note that $A \nfm_C B$ implies $[A]^1 \nfm_C B$ by monotonicity.

\begin{prop}\label{nfm-basic} \
  \begin{enumerate}
    \item $\nfm$ satisfies invariance, monotonicity, symmetry, and strong right transitivity (see Fact \ref{coheir-props-fact}). 
    \item\label{nfm-6} For all $\alpha$, $\slc{\alpha} (\nfm) = \slc{\alpha} (\nf)$, $\clc{\alpha} (\nfm) = \clc{\alpha} (\nf)$.
    \item $\nfm$ has \emph{strong base monotonicity}: If $A \nfm_C B$ and $C \subseteq C'$, then $A \nfm_{C'} B$.
    \item If $A \nf_M B$, then $A \nfm_M B$.
    \item\label{nfm-4} If $A \nfm_M B$ and $M$ is $\kappa$-saturated such $|M| \subseteq B$, then $A \nf_M B$.
    \item\label{nfm-5} For all $\alpha$, $\slcp{\alpha}(\nfm) \le \slc{\alpha} (\nf)$.
  \end{enumerate}
\end{prop}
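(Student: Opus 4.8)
The plan is to obtain every clause by unwinding Definition \ref{nfm-def} and transferring the corresponding property of $\nf$ from Hypothesis \ref{hyp-stab}, using coherence of $K$ (part of Definition \ref{indep-rel-def}) and base monotonicity to pass between a model base and its universe. Three of the six clauses are essentially immediate. Strong base monotonicity (item 3) is built into the definition, since a witness $M_0$ with $|M_0| \subseteq C$ also satisfies $|M_0| \subseteq C'$ whenever $C \subseteq C'$; item 4 holds by taking $M_0 := M$ as the witness. For item 5, if $A \nfm_M B$ with $|M| \subseteq B$, a witness $M_0 \lea \mathcal{N}$ with $|M_0| \subseteq |M|$ gives $M_0 \lea M$ by coherence, and then base monotonicity (legal because $|M| \subseteq B$) upgrades $A \nf_{M_0} B$ to $A \nf_M B$.

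For item 1, invariance, left/right monotonicity, and symmetry all transfer with the \emph{same} witness $M_0$: if $A \nf_{M_0} B$ then $A_0 \nf_{M_0} B_0$ for subsets, and (since $\is$ has symmetry with $\alpha = \infty$) $B \nf_{M_0} A$, which is exactly what is needed; ambient monotonicity is automatic as we work inside the fixed $\mathcal{N}$. The only genuine computation is strong right transitivity, which I expect to be the crux. Here I would start from $A \nfm_{C_0} C_1$ and $A \nfm_{C_1} B$, extract witnesses $M_0$ (with $|M_0| \subseteq C_0$, $A \nf_{M_0} C_1$) and $M_1$ (with $|M_1| \subseteq C_1$, $A \nf_{M_1} B$), use right monotonicity to get $A \nf_{M_0} M_1$ from $A \nf_{M_0} C_1$, and then invoke strong transitivity of $\nf$ to conclude $A \nf_{M_0} B$, hence $A \nfm_{C_0} B$. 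This is exactly where \emph{strong} transitivity is indispensable: the two witnesses $M_0$ and $M_1$ live over the unrelated bases $C_0$ and $C_1$, so there is no reason for $M_0 \lea M_1$ to hold, and ordinary transitivity would not apply.

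For item 2 I would show, at each $\mu$ above the common lower bound $|\alpha|^+ + \lambda_0^+$, that the $\nf$- and $\nfm$-locality properties are equivalent, so their minima coincide. For $\slc{\alpha}$: a $\nf$-witness $M_0 \lea M$ in $K_{<\mu}$ yields the base $C := |M_0| \subseteq |M|$ of size $< \mu$ with $A \nfm_C M$; conversely a base $C \subseteq |M|$ of size $< \mu$ with $A \nfm_C M$ comes with $M_0 \lea \mathcal{N}$, $|M_0| \subseteq C \subseteq |M|$, whence $M_0 \lea M$ by coherence and $M_0 \in K_{<\mu}$, recovering the $\nf$-witness. For $\clc{\alpha}$: one direction is immediate from item 4 (if $A \nf_{M_i} M_\delta$ then $A \nfm_{M_i} M_\delta$); for the other, $A \nfm_{M_i} M_\delta$ yields $M_0 \lea M_i$ (coherence) with $A \nf_{M_0} M_\delta$, and since $|M_i| \subseteq M_\delta$, base monotonicity gives $A \nf_{M_i} M_\delta$ for the same index $i$. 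Finally item 6 falls out of item 2: by right monotonicity $A \nfm_C M$ implies $A (\nfm_C)^1 M$, so the locality cardinal of the weaker relation $(\nfm)^1$ is no larger, giving $\slcp{\alpha}(\nfm) = \slc{\alpha}((\nfm)^1) \le \slc{\alpha}(\nfm) = \slc{\alpha}(\nf)$.
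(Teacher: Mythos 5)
Your proposal is correct and follows essentially the same route as the paper: the paper's proof states that all clauses ``quickly follow from the definition'' and spells out only strong right transitivity, via exactly your argument (extract the two witnesses, shrink the middle right-hand side by right monotonicity, then apply strong transitivity of $\nf$, which indeed is indispensable since the two witnessing models need not be $\lea$-comparable). Your verifications of the remaining clauses --- same-witness transfer for invariance, monotonicity and symmetry, and coherence plus base monotonicity for items 2 and 5 --- are precisely the routine details the paper leaves implicit.
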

\begin{proof}
  All quickly follow from the definition. As an example, we prove that $\nfm$ has strong right transitivity. Assume $A \nfm_{M_0} M_1$ and $A \nfm_{M_1} B$. Then there exists $M_0' \lea M_0$ and $M_1' \lea M_1$ such that $A \nf_{M_0'} M_1$ and $A \nf_{M_1'} B$. By monotonicity for $\nf$, $A \nf_{M_0'} M_1'$. By strong right transitivity for $\nf$, $A \nf_{M_0'} B$. Thus $M_0'$ witnesses $A \nfm_{M_0} B$.
\end{proof}

\begin{prop}\label{cont-prop}
  Assume $\seq{M_i : i < \delta}$, $\seq{N_i :i < \delta}$ are increasing chains of $\kappa$-saturated models, $A$ is a set. If $A \nfm_{M_i} N_i$ for all $i < \delta$ and $\clc{|A|} (\nf) \le \cf{\delta}$, then $A \nfm_{M_\delta} N_\delta$, where\footnote{Note that $M_\delta$ and $N_\delta$ need not be $\kappa$-saturated.} $M_\delta := \bigcup_{i < \delta} M_i$ and $N_\delta := \bigcup_{i < \delta} N_i$.
\end{prop}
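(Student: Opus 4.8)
The plan is to produce a single base \emph{model} $P$ sitting inside $M_\delta$ over which $A$ is free from all of $N_\delta$, and to build it by applying the chain local character of $\nf$ to the chain $\seq{N_i : i < \delta}$ rather than to $\seq{M_i : i < \delta}$. As a preliminary step I would reduce to the case where $\delta$ is regular: if $\delta$ is singular, fix an increasing cofinal sequence $\seq{i_\xi : \xi < \cf{\delta}}$ in $\delta$ and pass to the subchains $\seq{M_{i_\xi} : \xi < \cf{\delta}}$ and $\seq{N_{i_\xi} : \xi < \cf{\delta}}$. These have the same unions $M_\delta$ and $N_\delta$, the hypothesis $A \nfm_{M_{i_\xi}} N_{i_\xi}$ is inherited, and the assumption $\clc{|A|} (\nf) \le \cf{\delta}$ becomes a statement about a chain of regular length $\cf{\delta}$. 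So without loss of generality $\delta = \cf{\delta} \ge \clc{|A|} (\nf)$.

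Next I would apply the definition of $\clc{|A|} (\nf)$ to the increasing chain of models $\seq{N_i : i < \delta}$, viewing the $N_i$ as the base chain, $A$ as the moving set (of size $\le |A|$), and $\mathcal{N}$ as the ambient model extending all of them. Since $\delta$ is regular and $\delta \ge \clc{|A|} (\nf)$, this yields some $i^\ast < \delta$ with $A \nf_{N_{i^\ast}} N_\delta$, where $N_\delta = \bigcup_{i < \delta} N_i$ is exactly the union of this base chain.

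Finally I would unfold the hypothesis $A \nfm_{M_{i^\ast}} N_{i^\ast}$ to obtain, by Definition \ref{nfm-def}, a model $P \lea \mathcal{N}$ with $|P| \subseteq |M_{i^\ast}|$ and $A \nf_P N_{i^\ast}$. Strong transitivity (Hypothesis \ref{hyp-stab}) applied to $A \nf_P N_{i^\ast}$ and $A \nf_{N_{i^\ast}} N_\delta$ then gives $A \nf_P N_\delta$. Since $|P| \subseteq |M_{i^\ast}| \subseteq |M_\delta|$, this $P$ witnesses $A \nfm_{M_\delta} N_\delta$, which is the desired conclusion.

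The one point requiring care — and, I expect, the only genuine obstacle — is the last gluing step, which also explains why \emph{strong} transitivity rather than ordinary transitivity is assumed in Hypothesis \ref{hyp-stab}. The base $P$ coming from the hypothesis lives inside $M_{i^\ast}$, but nothing places $M_{i^\ast}$ (hence $P$) below $N_{i^\ast}$: we never assumed $M_i \lea N_i$, so $|P| \subseteq |N_{i^\ast}|$ can fail and the chain $P \lea N_{i^\ast}$ demanded by ordinary transitivity need not hold. Strong transitivity, which drops the requirement $M_0 \lea M_1$, is precisely what lets me compose the ``inner'' freeness $A \nf_P N_{i^\ast}$ with the ``chain'' freeness $A \nf_{N_{i^\ast}} N_\delta$. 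The reduction to regular $\delta$ and the unfolding of $\nfm$ are otherwise routine, and note that neither symmetry, uniqueness, nor base monotonicity is needed for this particular argument.
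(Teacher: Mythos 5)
Your proof is correct and is essentially the paper's own argument: after the same reduction to regular $\delta$, the paper likewise applies $\clc{|A|} (\nf)$ to the chain $\seq{N_i : i < \delta}$ to get $A \nf_{N_i} N_\delta$ for some $i$, and then glues this with the hypothesis $A \nfm_{M_i} N_i$ via strong transitivity before enlarging the base to $M_\delta$. The only cosmetic difference is that the paper invokes the already-established strong right transitivity and strong base monotonicity of $\nfm$ (Proposition \ref{nfm-basic}), whereas you unfold the $\nfm$-witness $P \lea \mathcal{N}$ and apply strong transitivity of $\nf$ directly --- which amounts to inlining the proof of that proposition.
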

\begin{proof}
  Without loss of generality, $\delta = \cf{\delta}$. By definition of $\clc{|A|} (\nf)$, there exists $i < \delta$ such that $A \nf_{N_i} N_\delta$, so $A \nfm_{N_i} N_\delta$. By strong right transitivity for $\nfm$, $A \nfm_{M_i} N_\delta$. By strong base monotonicity, $A \nfm_{M_\delta} N_\delta$.
\end{proof}

As already discussed, the reason we use $\nfm$ is that we want to generalize \cite[4.17]{makkaishelah} to our context. In their proof, Makkai and Shelah use that cardinal arithmetic behaves nicely above a strongly compact, and we cannot make use of this fact here. Thus we are only able to prove this lemma for $\nfm$ instead of $\nf$ (see Lemma \ref{local-character-finite}). Fortunately, this turns out to be enough. The reader can also think of $\nfm$ as a trick to absorb some quantifiers.

The next lemma imitates \cite[4.18]{makkaishelah}.

\begin{lem}\label{union-sat-2}
  Let $\lambda_0 \ge \kappap$ be regular, let  $\lambda > \lambda_0$ be regular such that $\K$ is stable in unboundedly-many cardinals below $\lambda$ and let $\seq{M_i : i < \delta}$ be an increasing chain with $M_i$ $\lambda$-saturated for all $i < \delta$. Assume that $\clc{1} (\nf \rest \Ksatp{\lambda_0}) \le \cf{\delta}$.
  
  If $\slcp{<\lambda} (\nfm) \le \lambda$, then $M_\delta := \bigcup_{i < \delta} M_i$ is $\lambda$-saturated.
\end{lem}
\begin{proof}
  Without loss of generality, $\delta = \cf{\delta}$.
  Let $A \subseteq |M_\delta|$ have size less than $\lambda$. If $\lambda \le \delta$, then $A \subseteq |M_i|$ for some $i < \delta$ and so any type over $A$ is realized in $M_i \subseteq |M_\delta|$. Now assume without loss of generality that $\lambda > \delta$. We need to show every Galois type over $A$ is realized in $M_\delta$. Let $\mu := \lambda_0 + \delta$.  Note that $\mu = \cf{\mu} < \lambda$. First, we build an array of $\lambda_0$-saturated models $\seq{N_i^\alpha \in K_{<\lambda} : i < \delta, \alpha < \mu}$ such that:

\begin{enumerate}
  \item For all $i < \delta$, $\seq{N_i^\alpha : \alpha < \mu}$ is increasing.
  \item For all $\alpha < \mu$, $\seq{N_i^\alpha : i < \delta}$ is increasing.
  \item For all $i < \delta$ and all $\alpha < \mu$, $N_i^\alpha \le M_i$.
  \item\label{cond-5} $A \subseteq \bigcup_{i < \delta} |N_i^0|$.
  \item\label{cond-6} For all $\alpha < \mu$ and all $i < \delta$, $\bigcup_{i < \delta} N_i^\alpha \nfm_{N_i^{\alpha + 1}} [M_i]^1$.
\end{enumerate}

For $\alpha < \mu$, write $N_\delta^\alpha := \bigcup_{i < \delta} N_i^\alpha$ and for $i \le \delta$, write $N_i^\mu := \bigcup_{\alpha < \mu} N_i^\alpha$. The following is a picture of the array constructed.

\[
\xymatrix{
M_i \ar[r] & M_{i+1} \ar[rr] &  & M_\delta \\
 &  &  &  \\
N_i^\mu \ar[r] \ar[uu] & N_{i+1}^\mu \ar[uu] \ar[rr] &  & N_\delta^\mu \ar[uu] \\
N_i^{\alpha+1} \ar[r] \ar[u] & N_{i+1}^{\alpha+1} \ar[rr] \ar[u] &  & N_\delta^{\alpha+1} \ar[u]\\
N_i^\alpha \ar[r] \ar[u] & N_{i+1}^\alpha \ar[rr] \ar[u] &  & N_\delta^\alpha \ar[u]\\
 &  &  & A \ar[u] 
}
\]

\underline{This is enough}: Note that for $i < \delta$, $N_i^\mu$ is $\lambda_0$-saturated and has size less than $\lambda$ (since $\lambda > \mu$ and $\lambda$ is regular). Note also that since $\delta \le \mu < \lambda$, $N_\delta^\mu$ has size less than $\lambda$ (but we do \emph{not} claim that it is $\lambda_0$-saturated).

\textbf{Claim:} For all $i < \delta$, $N_\delta^\mu \nfm_{N_i^\mu} [M_i]^1$. \\
\textbf{Proof of claim:} Fix $i < \delta$ and let $a \in M_i$. Fix $j < \delta$. By (\ref{cond-6}), monotonicity, and symmetry, $a \nfm_{N_i^{\alpha + 1}} N_j^\alpha$ for all $\alpha < \mu$. By Proposition \ref{cont-prop} applied to $\seq{N_i^{\alpha + 1} : \alpha < \mu}$ and $\seq{N_j^\alpha : \alpha < \mu}$, $a \nfm_{N_i^\mu} N_j^\mu$ (note that $\mu = \cf{\mu} \ge \delta \ge \clc{1} (\nf)$). Since $j$ was arbitrary, we can apply Proposition \ref{cont-prop} again with the constantly $N_i^\mu$ sequence and $\seq{N_j^\mu : j < \delta}$ (note that $\delta = \cf{\delta} \ge \clc{1} (\nf)$) to get that $a \nfm_{N_i^\mu} N_\delta^\mu$. By symmetry, $N_i^\mu \nfm_{N_i^\mu} a$, as desired.\hfill $\dag_{Claim}$\\

Now let $p \in \gS (A)$. By (\ref{cond-5}), $A \subseteq N_\delta^\mu$ so we can extend $p$ to some $q \in \gS (N_\delta^\mu)$. Since $\delta \ge \clc{1} (\nf)$, we can find $i < \delta$ such that $q$ does not fork over $N_i^\mu$. Since $N_i^\mu \le M_i$, $M_i$ is $\lambda$-saturated, and $\|N_i^\mu\| < \lambda$, we can find $a \in M_i$ realizing $q \rest N_i^\mu$. Since by the claim $N_\delta^\mu \nfm_{N_i^\mu} [M_i]^1$, we can use symmetry to conclude $a \nfm_{N_i^\mu} N_\delta^\mu$, and hence (Proposition \ref{nfm-basic}(\ref{nfm-4})) $a \nf_{N_i^\mu} N_\delta^\mu$. By uniqueness for types of length one, $a$ must realize $q$, so in particular $a$ realizes $p$. This concludes the proof that $M_\delta$ is $\lambda$-saturated.

\underline{This is possible}: We define $\seq{N_i^\alpha : i < \delta}$ by induction on $\alpha$. For a fixed $i < \delta$, choose any $N_i^0 \lea M_i$ in $K_{<\lambda}$ that contains $A \cap |M_i|$ and is $\lambda_0$-saturated (this is possible since $\K$ is stable in unboundedly-many cardinals below $\lambda$). For $\alpha < \mu$ limit and $i < \delta$, pick any $N_i^\alpha \lea M_i$ containing $\bigcup_{\beta < \alpha} N_i^\beta$ which is in $K_{<\lambda}$ and $\lambda_0$-saturated (this is possible for the same reason as in the base case). Now assume $\alpha = \beta + 1 < \mu$, and $N_i^\beta$ has been defined for $i < \delta$. Define $N_i^{\alpha}$ by induction on $i$. Assume $N_j^{\alpha}$ has been defined for all $j < i$. Pick $N_i^{\alpha}$ containing $\bigcup_{j < i} N_j^{\alpha}$ which is in $K_{<\lambda}$, is $\lambda_0$-saturated, and satisfies $N_i^\alpha \lea M_i$ and (\ref{cond-6}). This is possible by strong base monotonicity and definition of $\slcp{<\lambda}$.
\end{proof}

Below, we give a more natural formulation of the hypotheses. 

\begin{thm}\label{union-sat-closed}
  Let $\lambda > \kappa$. Let $\seq{M_i : i < \delta}$ be an increasing chain with $M_i$ $\lambda$-saturated for all $i < \delta$. If:

  \begin{enumerate}
    \item $\cf{\delta} \ge \clc{1} (\nf)$; and
    \item\label{union-sat-arithm} $\chi^{2^{<\kappap}}< \lambda$ for all $\chi < \lambda$,
  \end{enumerate}

  then $\bigcup_{i < \delta} M_i$ is $\lambda$-saturated.
\end{thm}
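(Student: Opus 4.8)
The plan is to deduce the theorem from Lemma~\ref{union-sat-2} by checking that lemma's hypotheses. The one genuine mismatch is that Lemma~\ref{union-sat-2} requires $\lambda$ \emph{regular}, whereas hypothesis (\ref{union-sat-arithm}) does not force this (it holds, for instance, at a strong limit singular $\lambda$ whose cofinality differs from $\kappap$). I would therefore not apply the lemma at $\lambda$ itself. Instead I localize: to prove that $M_\delta := \bigcup_{i < \delta} M_i$ is $\lambda$-saturated for types of length one, it suffices to show that every $p \in \gS^1 (A)$ is realized in $M_\delta$, for each fixed $A \subseteq |M_\delta|$ with $|A| < \lambda$.

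So fix such an $A$ and set $\chi_0 := |A| + \LS (K)$ and $\lambda' := \left(\chi_0^{<\kappap}\right)^+$. Then $\lambda'$ is regular, and since $\chi_0 < \lambda$, hypothesis (\ref{union-sat-arithm}) yields $\chi_0^{<\kappap} < \lambda$, whence $|A| < \lambda' \le \lambda$ and $\lambda' > \LS (K) \ge \lambda_0$. I would apply Lemma~\ref{union-sat-2} to the same chain but with $\lambda'$ in place of $\lambda$: each $M_i$, being $\lambda$-saturated, is a fortiori $\lambda'$-saturated for types of length one because $\lambda' \le \lambda$; the lemma then gives that $M_\delta$ is $\lambda'$-saturated for types of length one, and as $|A| < \lambda'$ this realizes $p$. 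Since $A$ and $p$ were arbitrary, $M_\delta$ is $\lambda$-saturated for types of length one.

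It remains to verify the hypotheses of Lemma~\ref{union-sat-2} at $\lambda'$. The assumption $\clc{1} (\nf) \le \cf{\delta}$ is given outright. For the witnessing $\mu$-AEC, note that a $\kappa$-AEC is also a $\kappap$-AEC (its language is $(<\kappa)$-ary, hence $(<\kappap)$-ary, and it is closed under chains of cofinality $\ge \kappap \ge \kappa$), that $\kappap$ is regular, and that $\kappap \le \chi_0^{<\kappap} < \lambda'$; so $\mu := \kappap$ works. The remaining two conditions — $(<\lambda')$-closedness and $\slcp{<\lambda'} (\nfm) \le \lambda'$ — both reduce to propagating hypothesis (\ref{union-sat-arithm}) from $\lambda$ down to $\lambda'$, i.e.\ to the claim that $\chi^{<\kappap} < \lambda'$ for all $\chi < \lambda'$. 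This is the crux, and it holds because $\kappap$ is regular, so $\left(\chi_0^{<\kappap}\right)^{<\kappap} = \chi_0^{<\kappap}$, whence any $\chi \le \chi_0^{<\kappap}$ has $\chi^{<\kappap} \le \chi_0^{<\kappap} < \lambda'$. Granting this, $(<\lambda')$-closedness follows because $\lambda' > \LS (K)$ and the Löwenheim--Skolem bound of the $\kappa$-AEC produces submodels of size $\le (|A'| + \LS (K))^{<\kappa} \le (|A'| + \LS (K))^{<\kappap} < \lambda'$; and the locality bound follows from Proposition~\ref{nfm-basic}.(\ref{nfm-5}) and Hypothesis~\ref{hyp-stab}.(\ref{hyp-5}), which give $\slcp{\alpha} (\nfm) \le \slc{\alpha} (\nf) \le \left((\alpha + \lambda_0)^{<\kappap}\right)^+ \le \lambda'$ for every $\alpha < \lambda'$, hence $\slcp{<\lambda'} (\nfm) \le \lambda'$.

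The only step requiring real care — and the main obstacle — is this descent to a regular $\lambda'$: one must choose $\lambda'$ large enough to contain $A$ and to keep $\slcp{<\lambda'} (\nfm)$ below $\lambda'$, yet small enough to stay $\le \lambda$, and then observe that the arithmetic hypothesis is self-improving under $\chi \mapsto \chi^{<\kappap}$ (because $\kappap$ is regular) so that it survives the passage from $\lambda$ to $\lambda'$. Everything else is a direct transcription of the hypotheses of Lemma~\ref{union-sat-2}.
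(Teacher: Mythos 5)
Your proof is correct and is essentially the paper's own argument: the paper likewise avoids applying Lemma~\ref{union-sat-2} at $\lambda$ itself, instead fixing $\chi < \lambda$, replacing it by $\chi^{<\kappap}$ (the same idempotence-of-$(\cdot)^{<\kappap}$ fact, via regularity of $\kappap$, that you identify as the crux), and applying the lemma at the regular successor $\chi^+$ --- which is exactly your $\lambda'$. The only difference is cosmetic: the paper quantifies over $\chi < \lambda$ and concludes $\chi^+$-saturation for cofinally many $\chi$, whereas you quantify over the parameter sets $A$.
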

\begin{proof}
  Let $M_\delta := \bigcup_{i < \delta} M_i$. Note that $\lambda > \kappap$: since $\lambda > \kappa$, $\lambda \ge \kappa^+$ and if $\lambda = \kappa^+$ then $\kappa^{<\kappa} < \lambda$ so $\kappa = \kappa^{<\kappa}$ hence $\kappa$ is regular: $\kappap = \kappa$.

 Let $\chi < \lambda$ be such that $\chi^+ > \kappap$. We show that $M_\delta$ is $\chi^+$-saturated. By hypothesis, $\chi^{2^{<\kappap}} < \lambda$, so replacing $\chi$ by $\chi^{<\kappap}$ if necessary, we might as well assume that $\chi = \chi^{2^{<\kappap}}$. We check that $\chi^+$ satisfies the conditions of Lemma \ref{union-sat-2} (with $\lambda_0$ there standing for $\kappap$ here) as $\lambda$ there. By assumption, $\chi^+$ is regular and $\chi^+ > \kappap$. Also, $\K$ is stable in unboundedly-many cardinals below $\chi^+$ because by the moreover part of Fact \ref{coheir-props-fact}, $\K$ is stable in $\chi$.

 Now by Proposition \ref{nfm-basic}(\ref{nfm-5}), $\slcp{\chi} (\nfm)
 \le \slc{\chi} (\nf)$. By Fact \ref{coheir-props-fact}, $\slc{\chi} (\nf) \le (\chi^{<\kappap})^+ = \chi^+$. Thus $\slcp{\chi} (\nfm) \le \chi^+$, as needed.

  Thus Lemma \ref{union-sat-2} applies and so $M_\delta$ is $\chi^+$-saturated. Since $\chi < \lambda$ was arbitrary, $M_\delta$ is $\lambda$-saturated.
\end{proof}

For the next corollaries to AECs, we repeat our hypotheses.

\begin{cor}\label{indep-stable}
  Let $\K$ be an AEC with amalgamation. Let $\kappa = \beth_\kappa > \LS (\K)$ be such that $\K$ is $(<\kappa)$-tame. Assume that $\K$ is stable in some cardinal greater than or equal to $\kappa$ and let $\seq{M_i : i < \delta}$ be an increasing chain of $\lambda$-saturated models. If:

  \begin{enumerate}
    \item $\cf{\delta} > 2^{<\kappap}$.
    \item\label{union-sat-arithm-2} $\chi^{2^{<\kappap}} < \lambda$ for all $\chi < \lambda$.
  \end{enumerate}
  
  Then $\bigcup_{i < \delta} M_i$ is $\lambda$-saturated.
\end{cor}
\begin{proof}
  Without loss of generality, $\delta = \cf{\delta} < \lambda$. Also without loss of generality, $\K$ has joint embedding (otherwise, partition it into disjoint classes, each of which has joint embedding), and arbitrarily large models (since $\K$ has a model of cardinality $\kappa = \beth_\kappa > \LS (\K)$). Therefore Hypothesis \ref{hyp-stab-2}, and hence the conclusion of Fact \ref{coheir-props-fact}, hold.
  
Note (Remark \ref{loc-card-rmk}) that $\clc{1} (\nf) \le \slc{1} (\nf) \le \left(2^{<\kappap}\right)^+$. Now use Theorem \ref{union-sat-closed}.
\end{proof}


  

\section{Using independence calculus: the superstable case}

Next we show that in the \emph{superstable} case we can remove the cardinal arithmetic condition (\ref{union-sat-arithm-2}) in Corollary \ref{indep-stable}. 

\begin{hypothesis}
  Same as in the previous section: Hypothesis \ref{hyp-stab-2}.
\end{hypothesis}

In the proof of Theorem \ref{union-sat-closed}, we estimated $\slcp{\alpha} (\nf)$ using $\slc{\alpha} (\nf)$. Using superstability, we can prove a better bound. This is adapted from \cite[4.17]{makkaishelah}.

\begin{lem}\label{local-character-finite}
  Assume that $\clc{1} (\nf) = \aleph_0$ and $\K$ is stable in all $\lambda \ge \kappap$. Then for any cardinal $\alpha$, $\slcp{\alpha} (\nfm) \le \slc{\kappap} (\nf) + \alpha^+$.
\end{lem}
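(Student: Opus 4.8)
The plan is to fix $M \lea \mathcal{N}$ and $A \subseteq |\mathcal{N}|$ with $|A| \le \alpha$, set $\nu := \slc{\lambda_0}(\nf)$ and $\mu := \nu + \alpha^+$, and to produce a \emph{single} model $M_0 \lea M$ with $\|M_0\| < \mu$ such that $A \nfm_{M_0} [M]^1$, i.e.\ $A \nfm_{M_0} b$ for every $b \in M$. By the definition of $(\nfm)^1$ this is precisely $\slcp{\alpha}(\nfm) \le \mu$. Crucially $M_0$ must not depend on $b$, so the whole point is to find one base that works uniformly in $b$.

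First I would build $M_0$. Enumerate $A = \seq{a_\epsilon : \epsilon < \alpha}$. For each single point $a_\epsilon$, apply the definition of $\slc{1}(\nf)$ (together with the monotonicity $\slc{1}(\nf) \le \slc{\lambda_0}(\nf) = \nu$ of the locality cardinal in the length) to the model $M$, obtaining $Q_\epsilon \lea M$ with $\|Q_\epsilon\| < \nu$ and $a_\epsilon \nf_{Q_\epsilon} M$. The key gain is that this base lies \emph{inside} $M$ and, by right monotonicity, yields $a_\epsilon \nf_{Q_\epsilon} b$ for \emph{all} $b \in M$ at once. Using that $K$ is $\lambda$-closed for every $\lambda \ge \lambda_0$, I would pick $M_0 \lea M$ containing $\bigcup_{\epsilon < \alpha} |Q_\epsilon|$; a routine cardinal computation gives $\|M_0\| \le |\alpha| \cdot \nu < \mu$.

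Now I fix $b \in M$ and prove $b \nfm_{M_0} A$ (equivalently, by symmetry of $\nfm$, $A \nfm_{M_0} b$) by induction along a continuous resolution $\seq{A_{<\epsilon} : \epsilon \le \alpha}$ of $A$, maintaining $b \nfm_{M_0} A_{<\epsilon}$. Limits are the easy half: at a limit $\epsilon$ I would invoke Proposition \ref{cont-prop} for the single element $b$, keeping the base constant equal to $M_0$ (legitimate by the strong base monotonicity of $\nfm$, Proposition \ref{nfm-basic}) and enclosing the sets $A_{<\eta}$ in an increasing chain of models. This is exactly where superstability is used: since $\clc{1}(\nf) = \aleph_0 \le \cf{\epsilon}$ for \emph{every} limit $\epsilon$, no cofinality restriction on $\epsilon$ is needed, which is what upgrades the stable-case result to hold for all chains. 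At a successor $\epsilon + 1$ I would feed the inductive statement $b \nfm_{M_0} A_{<\epsilon}$ together with the one-point statement $b \nfm_{M_0} a_\epsilon$ (witnessed by $Q_\epsilon \lea M_0$, as $a_\epsilon \nf_{Q_\epsilon} b$) into the strong right transitivity, strong base monotonicity, and symmetry of $\nfm$ to conclude $b \nfm_{M_0} A_{<\epsilon+1}$.

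The hard part is precisely this successor step, and it is where the passage from $\nf$ to $\nfm$ is indispensable. The obstacle is that the new point $a_\epsilon$ may be highly \emph{dependent} on the already-processed set $A_{<\epsilon}$, which moreover lies \emph{outside} $M$; ordinary forking calculus cannot push the base of $a_\epsilon \nf_{Q_\epsilon} M$ past these external parameters, and a naive ``pairing'' of $b \nfm_{M_0} A_{<\epsilon}$ with $b \nfm_{M_0} a_\epsilon$ into $b \nfm_{M_0} A_{<\epsilon+1}$ is simply not a valid inference. The design of $\nfm$ is tailored to this: its base need only \emph{contain the universe of some witnessing model}, so strong base monotonicity lets the base freely absorb $A_{<\epsilon}$ while the witness stays the fixed internal model $Q_\epsilon$, and strong right transitivity then splices the two statements through an intermediate base \emph{without ever requiring $a_\epsilon$ to be independent from $A_{<\epsilon}$}. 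To make the transitivity rigorous one should route it through a model base (as in the proof of Proposition \ref{nfm-basic}), which I expect to require strengthening the inductive hypothesis to $b \nfm_{M_0} N_\epsilon$ for an increasing chain of models $N_\epsilon \supseteq |M_0| \cup A_{<\epsilon}$, reducing the successor to checking the one-point extension $b \nfm_{N_\epsilon} N_{\epsilon+1}$ via the submodel witness $Q_\epsilon \lea N_\epsilon$. Once this combination is verified, the induction closes, $b \nfm_{M_0} A$ holds for every $b \in M$, and the bound $\|M_0\| < \slc{\lambda_0}(\nf) + \alpha^+$ delivers the claim.
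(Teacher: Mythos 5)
There is a genuine gap, and it sits exactly where you located the difficulty: the successor step. To apply strong right transitivity you must first establish $b \nfm_{N_\epsilon} N_{\epsilon+1}$, i.e.\ exhibit a model $M'$ with $|M'| \subseteq |N_\epsilon|$ and $b \nf_{M'} N_{\epsilon+1}$, and you propose $M' = Q_\epsilon$. But the only independence fact your construction provides about $Q_\epsilon$ is $a_\epsilon \nf_{Q_\epsilon} M$, which by symmetry and monotonicity yields statements of the form $B_0 \nf_{Q_\epsilon} a_\epsilon$ for $B_0 \subseteq |M|$ --- in particular $b \nf_{Q_\epsilon} a_\epsilon$, and nothing more. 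What you need, $b \nf_{Q_\epsilon} N_{\epsilon+1}$, asserts that $b$ is independent over $Q_\epsilon$ from \emph{all} of $N_{\epsilon+1} \supseteq |M_0| \cup A_{<\epsilon} \cup \{a_\epsilon\}$, and $Q_\epsilon$ was chosen with no reference whatsoever to $A_{<\epsilon}$ (which, as you note, need not even lie inside $M$). Strong base monotonicity cannot produce this: it enlarges the \emph{set} in the subscript, but the witnessing model must still satisfy an honest $\nf$-statement over the entire right-hand side. Transitivity cannot either: it consumes two true statements, and the second is precisely the missing one. Nor can a smarter witness save the step: applying $\slc{1} (\nf)$ to $b$ and $N_{\epsilon+1}$ gives a small base $M'' \lea N_{\epsilon+1}$, but nothing forces $|M''| \subseteq |N_\epsilon|$. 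Your step really requires a concatenation principle ``$b \nf_{M'} A_{<\epsilon}$ and $b \nf_{Q_\epsilon} a_\epsilon$ imply $b \nf_{?} \left(A_{<\epsilon} \cup \{a_\epsilon\}\right)$'', which no axiom of Hypothesis \ref{hyp-stab} or Proposition \ref{nfm-basic} licenses; indeed, the very distinction between $(\nfm)^1$ and $\nfm$ (hence between $\slcp{\alpha}$ and $\slc{\alpha}$) exists because forking-like relations lack such pointwise-to-global character on a side. Your limit step, by contrast, is essentially fine --- Proposition \ref{cont-prop} does handle it --- so the enumeration induction fails only, but fatally, at successors.

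The paper's proof avoids successor steps altogether, and that is the idea missing from your outline: it inducts on the \emph{cardinal} $\alpha = |A|$, not along an enumeration of $A$. Using the closure hypothesis it assumes without loss of generality that $A$ is a model, resolves it as an increasing chain $\seq{A_i : i < \alpha}$ of \emph{models} of size $<\alpha$, and applies the induction hypothesis to each whole piece $A_i$, obtaining $M_i \lea N$ in $K_{<\mu}$ with $A_i \nfm_{M_i} [N]^1$ --- a base working uniformly for the entire piece, not for a single point. Then, for each fixed $a \in N$, it makes a single application of Proposition \ref{cont-prop} to the chains $\seq{M_i : i < \alpha}$ and $\seq{A_i : i < \alpha}$, legitimate since $\clc{1} (\nf) = \aleph_0 \le \cf{\alpha}$. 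The new independence needed for the gluing is generated inside \ref{cont-prop} by \emph{chain} local character applied to the chain of models $\seq{A_i}$: there is $i < \alpha$ with $a \nf_{A_i} A$, which strong transitivity then splices with $a \nfm_{M_i} A_i$, and strong base monotonicity finishes. Your scheme, built on pointwise local character $\slc{1} (\nf)$ and an ordinal enumeration, has no counterpart of this: at a successor ordinal there is no limit chain to which $\clc{1}$ can be applied, and this cannot be repaired without restructuring the induction as the paper does.
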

\begin{proof}
  Let $A$ have size $\alpha$ and $N$ be a $\kappa$-saturated model. We show by induction on $\alpha$ that there exists an $M \lea N$ with $\|M\| < \mu := \slc{\kappap} (\nf) + \alpha^+$ and $A \nfm_M [N]^1$. Note that $\mu > \kappap$.

  If $\alpha \le \kappap$, then apply the definition of $\slc{\kappap} (\nf)$ to get a $M \le N$ with $\|M\| < \slc{\kappap} (\nf)$, $A \nfm_M N$, which is more than what we need.

  Now, assume $\alpha > \kappap$, and that the result has been proven for all $\alpha_0 < \alpha$. Closing $A$ to a $\kappa$-saturated model (using the stability assumptions) if necessary, we can assume without loss of generality that $A$ is a $\kappa$-saturated model. Let $\seq{A_i : i < \alpha}$ be an increasing resolution of $A$ such that $A_i$ is $\kappa$-saturated in $\K_{<\alpha}$ for all $i < \alpha$. Now define an increasing chain $\seq{M_i :i < \alpha}$ such that for all $i < \alpha$:

  \begin{enumerate}
  \item $M_i \in K_{<\mu}$ and $M_i$ is $\kappa$-saturated.
  \item $M_i \lea N$.
  \item $A_i \nfm_{M_i} [N]^1$.
  \end{enumerate}

  \underline{This is possible}: For $i < \alpha$, use the induction hypothesis to find $M_i \lea N$ such that $A_i \nfm_{M_i} [N]^1$ and $\|M_i\| < \mu$. By strong base monotonicity of $\nfm$ and the closure assumption, we can assume that $M_i$ contains $\bigcup_{j < i} M_j$.

  \underline{This is enough}: Let $M \in K_{<\mu}$ be $\kappa$-saturated and contain $\bigcup_{i < \alpha} M_i$. We claim that $A \nfm_M [N]^1$. Let $a \in N$. By symmetry, it is enough to see $a \nfm_M A$. This follows from strong base monotonicity and Proposition \ref{cont-prop} applied to $\seq{M_i : i < \alpha}$ and $\seq{A_i : i < \alpha}$ since $\clc{1}(\nfm) = \aleph_0 \le \cf{\alpha}$ by Proposition \ref{nfm-basic}(\ref{nfm-6}) and the hypothesis.
\end{proof}
\begin{remark}
The heavy use of strong base monotonicity in the above proof was the reason for introducing $\nfm$. 
\end{remark}

\begin{thm}\label{union-sat-ss}
  Let $\lambda_0 \ge \kappap$ be regular. Assume that $\clc{1} (\nf \rest \Ksatp{\lambda_0}) = \aleph_0$ and $K$ is stable in all $\lambda \ge \lambda_0$. Let $\lambda \ge \slc{\kappap} (\nf) + \lambda_0^+$.

  Let $\seq{M_i : i < \delta}$ be an increasing chain with $M_i$ $\lambda$-saturated for all $i < \delta$. Then $M_\delta := \bigcup_{i < \delta} M_i$ is $\lambda$-saturated.
\end{thm}
\begin{proof}
  Let $\chi < \lambda$ be such that $\chi^+ \ge \slc{\kappap} (\nf) + \lambda_0^+$. We claim that $\chi^+$ satisfies the hypotheses of Lemma \ref{union-sat-2} (as $\lambda$ there). Indeed by Lemma \ref{local-character-finite}, $\slcp{\chi} (\nfm) \le \slc{\kappap} (\nf) + \chi^+ = \chi^+$.

  Thus Lemma \ref{union-sat-2} applies: $M_\delta$ is $\chi^+$-saturated. Since $\chi < \lambda$ was arbitrary, $M_\delta$ is $\lambda$-saturated.
\end{proof}

For the next corollary to AECs, we drop our hypotheses.

\begin{cor}\label{cor-ss-aec}
  Let $\K$ be an AEC with amalgamation and no maximal models. Let $\kappa = \beth_\kappa > \LS (\K)$ be such that $\K$ is $(<\kappa)$-tame. If $\K$ is categorical in some cardinal strictly above $\kappa$, then for all $\lambda > 2^\kappa$, $\Ksatp{\lambda}$ is an AEC with Löwenheim-Skolem number $\lambda$.
\end{cor}
\begin{proof}
  Using categoricity and amalgamation, it is easy to check that $\K$ has joint embedding. Let $\lambda_0 := \kappa^+$. By \cite[10.8,10.16]{indep-aec-apal}, $\K$ is stable in all $\mu \ge \kappa$ and $\clc{1} (\nf \rest \Ksatp{\lambda_0}) = \aleph_0$. In particular, Hypothesis \ref{hyp-stab-2} holds. Remembering (Fact \ref{coheir-props-fact}) that $\slc{\kappap} (\nf) \le \kappap^{<\kappap} \le 2^{\kappa}$, we obtain the result from Theorem \ref{union-sat-ss} (to show that $\LS (\Ksatp{\lambda}) = \lambda$, imitate the proof of \cite[III.3.12]{shelahfobook}).
\end{proof}

\section{Averages} \label{averages-sec}

In this section, we write in the framework of stability theory inside a model:

\begin{hypothesis} \
  \begin{enumerate}
  \item $\kappa$ is an infinite cardinal.
  \item $L$ is a $(<\kappa)$-ary language.
  \item $\mathcal{N}$ is a fixed $L$-structure.
  \item We work inside $\mathcal{N}$.
  \item Hypotheses \ref{model-hyp} and \ref{sat-hyp}, see the discussion below.
  \end{enumerate}
  
  Midway through, we will also assume Hypothesis \ref{midway-hyp}.
\end{hypothesis}

We use the same notation and convention as \cite[Section 2]{sv-infinitary-stability-afml}: although we may forget to say it, we always work with \emph{quantifier-free} $L_{\kappa, \kappa}$ formulas and types (so the arity of all the variables inside a given formula is less than $\kappa$). Also, since we work inside $\mathcal{N}$, everything is defined relative to $\mathcal{N}$. For example $\tp (\bc / A)$ means $\tp_{qL_{\kappa, \kappa}} (\bc / A; \mathcal{N})$, the \emph{quantifier-free} $L_{\kappa, \kappa}$-type of $\bc$ over $A$, and saturated means saturated in $\mathcal{N}$. Similarly, we write $\models \phi[\bb]$ instead of $\mathcal{N} \models \phi[\bb]$. By ``type'', we mean a member of $\Ss^{<\infty} (A)$ for some set $A$. Whenever we mention a set of formulas (meaning a possibly incomplete type), we mean a ($L_{\kappa, \kappa}$-quantifier free) set of formulas that is satisfiable by an element in $\mathcal{N}$.

Unless otherwise noted, the letters $\ba$, $\bb$, $\bc$ denote tuples of elements \emph{of length less than $\kappa$}. The letters $A$, $B$, $C$, will denote subsets of $\mathcal{N}$. We say $\seq{A_i : i < \delta}$ is \emph{increasing} if $A_i \subseteq A_j$ for all $i < j < \delta$.

We say $\mathcal{N}$ is \emph{$\alpha$-stable in $\lambda$} if $|\Ss^\alpha (A)| \le \lambda$ for all $A$ with $|A| \le \lambda$ (the default value is for $\alpha$ is $1$). We say $\mathcal{N}$ has the \emph{order property of length $\chi$} if there exists a (quantifier-free) formula $\phi (\bx, \by)$ and elements $\seq{\ba_i : i < \chi}$ of the same arity (less than $\kappa$) such that for $i, j < \chi$, $\models \phi[\ba_i, \ba_j]$ if and only if $i < j$. 

Boldface letters like $\BI$, $\BJ$ will always denote \emph{sequences of tuples of the same arity} (less than $\kappa$). We will use the corresponding non-boldface letter to denote the linear ordering indexing the sequence (writing for example $\BI = \seq{\ba_i : i \in I}$, where $I$ is a linear order). We sometimes treat such sequences as sets of tuples, writing statements like $\ba \in \BI$, but then we are really looking at the range of the sequence. To avoid potential mistakes, we do not necessarily assume that the elements of $\BI$ are all distinct although it should always hold in cases of interest. We write $|\BI|$ for the cardinality of the range, i.e.\ the number of distinct elements in $\BI$. We will sometimes use the interval notation on linear order. For example, if $I$ is a linear order and $i \in I$, $[i, \infty)_I := \{j \in I \mid j \ge i\}$.

As the reader will see, this section builds on earlier work of Shelah from \cite[Chapter V.A]{shelahaecbook2}. Note that Shelah works in an arbitrary logic. We work only with quantifier-free $L_{\kappa, \kappa}$-formulas in order to be concrete and because this is the case we are interested in to translate the syntactic results to AECs.

The reader may wonder what the right notion of submodel is in this context. We could simply say that it is ``subset'' but this does not quite work when translating to AECs. Thus we fix a set of subsets of $\mathcal{N}$ that by definition will be the substructures of $\mathcal{N}$. We require that this set satisfies some axioms akin to those of AECs. This can be taken to be the full powerset if one is not interested in doing an AEC translation.  

\begin{hypothesis}\label{model-hyp}
  $\mathcal{S} \subseteq \mathcal{P} (|\mathcal{N}|)$ is a fixed set of subsets of $\mathcal{N}$ satisfying:

  \begin{enumerate}
    \item Closure under chains: If $\seq{A_i : i < \delta}$ is an increasing sequence of members of $\mathcal{S}$, then $\bigcup_{i < \delta} A_i$ is in $\mathcal{S}$.
    \item Löwenheim-Skolem axiom: If $A \subseteq B$ are sets and $B \in \mathcal{S}$, there exists $A' \in \mathcal{S}$ such that $A \subseteq A' \subseteq B$ and $|A'| \le (|L| + 2)^{<\kappa} + |A|$.
  \end{enumerate}

  We exclusively use the letters $M$ and $N$ to denote elements of $\mathcal{S}$ and call such elements \emph{models}. We pretend they are $L$-structures and write $|M|$ and $|N|$ for their universe and $\|M\|$ and $\|N\|$ for their cardinalities.
\end{hypothesis}
\begin{remark}
  An element $M$ of $\mathcal{S}$ is \emph{not} required to be an $L$-structure. Note however that if it is $\kappa$-saturated for types of length less than $\kappa$ (see below), then it will be one.
\end{remark}

We also need to discuss the definition of saturated: define $M$ to be \emph{$\lambda$-saturated for types of length $\alpha$} if for any $A \subseteq |M|$ of size less than $\lambda$, any $p \in \Ss^\alpha (A)$ is realized in $M$. Similarly define $\lambda$-saturated for types of length less than $\alpha$.  Now in the framework we are working in, $\mu$-saturated for types of length less than $\kappa$ seems to be the right notion, so we say that \emph{$M$ is $\mu$-saturated} if it is $\mu$-saturated for types of length less than $\kappa$. Unfortunately it is not clear that it is equivalent to $\mu$-saturated for types of length one (or length less than $\omega$), even when $\mu > \kappa$. However \cite[II.1.14]{shelahaecbook} (the ``model-homogeneity = saturativity'' lemma) tells us that in case $\mathcal{N}$ comes from an AEC, then this is the case. Thus we will make the following additional assumption. Note that it is possible to work without it, but then everywhere below ``stability'' must be replaced by ``$(<\kappa)$-stability''.

\begin{hypothesis}\label{sat-hyp}
  If $\mu > (|L| + 2)^{<\kappa}$, then whenever $M$ is $\mu$-saturated for types of length one, it is $\mu$-saturated (for types of length less than $\kappa$).
\end{hypothesis}

Our goal in this section is to use Shelah's notion of average in this framework to prove a result about chains of saturated models. Recall:

\begin{defin}[Definition V.A.2.6 in \cite{shelahaecbook2}]
  For $\BI$ a sequence, $\chi$ an infinite cardinal such that $|\BI| \ge \chi$, and $A$ a set, define $\Av_{\chi} (\BI / A)$ to be the set of formulas $\phi (\bx)$ over $A$ so that the set $\{\bb \in \BI \mid \models \neg \phi[\bb]\}$ has size less than $\chi$.
\end{defin}

Note that if $|\BI| \ge \chi$ (say all the elements of $\BI$ have the same arity $\alpha$) and $\phi (\bx)$ is a formula with $\ell (\bx) = \alpha$, then \emph{at most one} of $\phi$, $\neg \phi$ is in $\Av_{\chi} (\BI / A)$. Thus the average is not obviously contradictory, but we do \emph{not} claim that there is an element in $\mathcal{N}$ realizing it.  Also, $\Av_\chi (\BI/A)$ might be empty.  However, we give conditions below (see Fact \ref{conv-thm} and Theorem \ref{coherence-convergence}) where it is in fact complete (i.e.\ exactly one of $\phi$ and $\neg \phi$ is in the average).

  The next lemma is a simple counting argument allowing us to find such an element:

\begin{lem}\label{average-realize}
  Let $\BI$ be a sequence with $|\BI| \ge \chi$ and let $A$ be a set. Let $p := \Av_{\chi} (\BI / A)$. Assume that

  $$
  |\BI| > \chi + \min ((|A| + |L| + 2)^{<\kappa}, |\Ss^{\ell (p)} (A)|)
  $$

  Then there exists $\bb \in \BI$ realizing $p$.
\end{lem}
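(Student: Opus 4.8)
The plan is to count, in two different ways, the types over $A$ realized by elements of $\BI$, and to show that if no single element of $\BI$ realized $p := \Av_\chi(\BI/A)$, the sequence would have to be too short. First I would observe the key dichotomy built into the average: for every formula $\phi(\bx)$ over $A$, \emph{at most one} of $\phi$, $\neg\phi$ lies in $p$, and whichever lies in $p$ excludes fewer than $\chi$ elements of $\BI$ (the set of $\bb \in \BI$ with $\models \neg\phi[\bb]$ for $\phi \in p$ has size $< \chi$). Thus each formula of $p$ is ``almost always true'' along $\BI$.

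The heart of the argument is the following. Suppose, toward a contradiction, that no $\bb \in \BI$ realizes $p$. Then for each $\bb \in \BI$ there is a witnessing formula $\phi_\bb \in p$ with $\models \neg\phi_\bb[\bb]$, i.e. $\bb$ falls into the ``small'' exceptional set of some formula in $p$. The strategy is to bound the number of elements that can be excluded this way. For each $\phi \in p$, the exceptional set $E_\phi := \{\bb \in \BI \mid \models \neg\phi[\bb]\}$ has size $< \chi$, and $\bigcup_{\bb} \{\bb\} \subseteq \bigcup_{\phi \in p} E_\phi$, so $\BI$ would be covered by exceptional sets. To make this into a genuine bound I need a ceiling on how many \emph{distinct} formulas (or distinct types) actually participate, which is exactly where the second term in the hypothesis enters. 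There are two natural ways to count the relevant formulas/types over $A$: either count formulas directly, giving roughly $(|A| + |L| + 2)^{<\kappa}$ many quantifier-free $L_{\kappa,\kappa}$-formulas over $A$, or count complete types, giving $|\Ss^{\ell(p)}(A)|$. Taking the \emph{minimum} of these two quantities, call it $\theta$, I would argue that the number of exceptional elements is at most $\theta \cdot \chi$ (at most $\chi$ exceptions per formula/type, times at most $\theta$ formulas/types). Since $\chi$ is infinite and $\theta + \chi < |\BI|$ by hypothesis, we get $|\BI| \le \theta \cdot \chi = \theta + \chi < |\BI|$, the desired contradiction.

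The cleaner route for the type-counting bound is probably to partition $\BI$ by the complete type $\tp(\bb/A)$ each element realizes: there are at most $|\Ss^{\ell(p)}(A)|$ such types, and if $\bb$ does not realize $p$ then neither does any element realizing the same complete type as $\bb$, so for each non-$p$ type $q$ there is a single formula $\phi_q \in p$ with $\neg\phi_q \in q$, contributing at most $|E_{\phi_q}| < \chi$ elements. Summing over the at most $\theta$ bad types gives the bound $\theta \cdot \chi$. The formula-counting bound is entirely parallel, using the estimate that the number of quantifier-free $L_{\kappa,\kappa}$-formulas over a set of size $|A|$ is at most $(|A| + |L| + 2)^{<\kappa}$. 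Taking whichever count is smaller yields $\theta = \min\big((|A| + |L| + 2)^{<\kappa}, |\Ss^{\ell(p)}(A)|\big)$.

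The main obstacle I anticipate is bookkeeping the counting carefully so that the bound is genuinely $\theta \cdot \chi$ rather than something larger: I must ensure I am not overcounting the witnessing formulas, and that passing to complete types (rather than single formulas) is legitimate, i.e. that an element failing to realize $p$ is detected by a single formula whose exceptional set is small. A mild subtlety is that $p$ may be incomplete or even empty, but this only helps: if $p$ is empty there is nothing excluded and the hypothesis $|\BI| > \chi$ trivially locates many elements, and in general the argument only uses that membership in $p$ forces the exceptional set to be of size $< \chi$. No deep machinery is needed beyond elementary cardinal arithmetic ($\chi$ infinite so $\theta \cdot \chi = \theta + \chi$) and the defining property of $\Av_\chi$; the content is entirely in organizing the cover of $\BI$ by the small exceptional sets.
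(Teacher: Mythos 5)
Your proof is correct and takes essentially the same approach as the paper's: your formula-counting route is exactly the paper's first case (bounding $\bigl|\bigcup_{\phi \in p} \{\bb \in \BI \mid \models \neg\phi[\bb]\}\bigr|$ by $\chi + (|A|+|L|+2)^{<\kappa}$ and picking an element outside), and your partition-by-complete-types count is just the contrapositive packaging of the paper's second case, which uses pigeonhole to extract a subset of size $\left(\chi + |\Ss^{\ell(p)}(A)|\right)^+$ realizing a single complete type $q$ and then shows $p \subseteq q$ via the same small-exceptional-set observation. The only cosmetic difference is that you argue by contradiction, covering $\BI$ by the exceptional sets, whereas the paper directly exhibits a realizing element in each case.
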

\begin{proof}
  Assume first that the minimum is realized by $(|A| + |L| + 2)^{<\kappa}$. By definition of the average, for every every formula $\phi (\bx) \in p$, $\BJ_{\phi} := \{\bb \in \BI \mid \models \neg \phi[\bb]\}$ has size less than $\chi$. Let $\BJ := \bigcup_{\phi \in p} \BJ_{\phi}$. Note that $|\BJ| \le \chi + (|A| + |L| + 2)^{<\kappa}$ and by definition any $\bb \in \BI \backslash \BJ$ realizes $p$.

  Now assume that the minimum is realized by $|\Ss^{\ell (p)} (A)|$. Let $\mu := \chi + |\Ss^{\ell (p)} (A)|$. By the pigeonhole principle, there exists $\BI_0 \subseteq \BI$ of size $\mu^+$ such that $\bc, \bc' \in \BI_0$ implies $q := \tp (\bc / A) = \tp (\bc' / A)$. We claim that $p \subseteq q$, which is enough: any $\bb \in \BI_0$ realizes $p$. If not, there exists $\phi (\bx) \in p$ such that $\neg \phi (\bx) \in q$. By definition of the average, fewer than $\chi$-many elements of $\BI$ satisfy $\neg \phi (\bx)$. However, $\neg \phi (\bx)$ is in $q$ which means that it is realized by all the elements of $\BI_0$ and $|\BI_0| = \mu^+ > \chi$, a contradiction.
\end{proof}

We now recall the definition of splitting and study how it interacts with averages.

\begin{defin}
  A set of formulas $p$ \emph{splits over $A$} if there exists $\phi (\bx, \bb) \in p$ and $\bb'$ with $\tp (\bb' / A) = \tp (\bb / A)$ and $\neg \phi (\bx, \bb') \in p$.
\end{defin}

The following result is classical:

\begin{lem}[Uniqueness for nonsplitting]\label{ns-uq}
  Let $A \subseteq |M| \subseteq B$. Assume $p, q$ are complete sets of formulas (say in the variable $\bx$, with $\ell (\bx) < \kappa$) over $B$ that do not split over $A$ and $M$ is $|A|^+$-saturated. If $p \rest M = q \rest M$, then $p = q$. 
\end{lem}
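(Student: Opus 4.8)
The plan is to prove the two inclusions $p \subseteq q$ and $q \subseteq p$ by a symmetric argument; since both are complete, either inclusion in fact forces equality, so it suffices to fix a formula $\phi(\bx, \bb) \in p$ with parameter tuple $\bb$ drawn from $B$ (of length $< \kappa$) and show $\phi(\bx, \bb) \in q$. The engine of the argument is the standard one for uniqueness of nonsplitting extensions: replace the parameter $\bb \in B$ by a parameter $\bb' \in M$ realizing the same type over $A$, transfer the formula across $M$ using $p \rest M = q \rest M$, and then slide back from $\bb'$ to $\bb$ on the $q$ side.

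Concretely, I would first use saturation to produce $\bb'$. Since $A \subseteq |M|$, the type $\tp (\bb / A)$ lies in $\Ss^{\ell (\bb)} (A)$, and $A$ has size $\le |A| < |A|^+$ while $\ell (\bb) < \kappa$; because $M$ is $|A|^+$-saturated (that is, $|A|^+$-saturated for types of length less than $\kappa$), there is $\bb' \in \fct{<\kappa}{|M|}$ with $\tp (\bb' / A) = \tp (\bb / A)$. Next, I would invoke nonsplitting of $p$: if $\phi(\bx, \bb') \notin p$, then completeness of $p$ gives $\neg\phi(\bx, \bb') \in p$, and together with $\phi(\bx, \bb) \in p$ and $\tp (\bb' / A) = \tp (\bb / A)$ this exhibits a splitting of $p$ over $A$, a contradiction; hence $\phi(\bx, \bb') \in p$. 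Since $\bb' \in |M|$, the formula $\phi(\bx, \bb')$ is over $M$, so $\phi(\bx, \bb') \in p \rest M = q \rest M$, whence $\phi(\bx, \bb') \in q$.

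Finally I would run the same nonsplitting step on the $q$ side: as $q$ does not split over $A$, $\phi(\bx, \bb') \in q$, and $\tp (\bb' / A) = \tp (\bb / A)$, we must have $\phi(\bx, \bb) \in q$ (else $\neg\phi(\bx, \bb) \in q$ witnesses splitting of $q$). This proves $p \subseteq q$; exchanging the roles of $p$ and $q$ gives the reverse inclusion, and completeness of both then yields $p = q$.

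I do not expect a genuine obstacle here, as this is the classical argument; the only point requiring care is that the parameter tuples have length $< \kappa$, so the hypothesis must be the \emph{length-$(<\kappa)$} notion of saturation, and it is precisely the bound $|A|^+$ on the saturation level (matched against a base of size $\le |A|$) that makes the realization of $\tp (\bb / A)$ inside $M$ available. Everything else is bookkeeping with the completeness of $p$ and $q$ and the definition of splitting.
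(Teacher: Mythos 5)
Your proposal is correct and follows essentially the same route as the paper's proof: fix $\phi(\bx,\bb) \in p$, use saturation of $M$ to find $\bb' \in \fct{<\kappa}{|M|}$ with $\tp(\bb'/A) = \tp(\bb/A)$, apply nonsplitting of $p$ to get $\phi(\bx,\bb') \in p$, transfer across $p \rest M = q \rest M$, and apply nonsplitting of $q$ to recover $\phi(\bx,\bb) \in q$. You also correctly flag the one subtle point, namely that the hypothesis must be saturation for types of length less than $\kappa$ (not just length one), which is exactly what the paper records in a footnote to its own proof.
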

\begin{proof}
  Let $\phi (\bx, \bb) \in p$ with $\bb \in B$. We show $\phi (\bx, \bb) \in q$ and the converse is symmetric. By saturation\footnote{Note that we are really using saturation for types \emph{of length less than $\kappa$} here.}, find $\bb' \in M$ such that $\tp (\bb' / A) = \tp (\bb / A)$. Since $p$ does not split over $A$, $\phi (\bx, \bb') \in p$. Since $p \rest M = q \rest M$, $\phi (\bx, \bb') \in q$. Since again $q$ does not split, $\phi (\bx, \bb) \in q$.
\end{proof}

We would like to study when the average is a nonsplitting extension. This is the purpose of the next definition.

\begin{defin}
  $\BI$ is \emph{$\chi$-based on $A$} if for any $B$, $\Av_{\chi} (\BI / B)$ does not split over $A$.
\end{defin}

The next lemma tells us that any sequence is based on a set of small size.

\begin{lem}[IV.1.23(2) in \cite{shelahaecbook}]\label{average-based}
  If $\BI$ is a sequence and $\BJ \subseteq \BI$ has size at least $\chi$, then $\BI$ is $\chi$-based on $\BJ$. 
\end{lem}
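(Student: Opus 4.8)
The plan is to argue by contradiction straight from the definitions, treating $\BJ$ as the set of elements occurring in its tuples. Suppose $\BI$ is \emph{not} $\chi$-based on $\BJ$: then there is some set $B$ for which $p := \Av_{\chi} (\BI / B)$ splits over $\BJ$. Unwinding the definition of splitting, I would fix a formula $\phi (\bx, \by)$, a tuple $\bb$ from $B$ with $\phi (\bx, \bb) \in p$, and a tuple $\bb'$ with $\tp (\bb' / \BJ) = \tp (\bb / \BJ)$ and $\neg \phi (\bx, \bb') \in p$.

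Next I would extract the two counting facts hidden in ``$\in \Av_\chi$''. Membership $\phi (\bx, \bb) \in \Av_{\chi} (\BI / B)$ says that $S_1 := \{\bc \in \BI \mid \models \neg \phi[\bc, \bb]\}$ has size less than $\chi$, and $\neg \phi (\bx, \bb') \in \Av_{\chi} (\BI / B)$ says that $S_2 := \{\bc \in \BI \mid \models \phi[\bc, \bb']\}$ has size less than $\chi$. Since $\chi$ is infinite, $|S_1 \cup S_2| < \chi \le |\BJ|$, so $\BJ$ cannot be contained in $S_1 \cup S_2$ and I may choose $\bc \in \BJ \setminus (S_1 \cup S_2)$.

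Finally I would invoke the type equality. For this fixed $\bc \in \BJ$, the formula $\phi (\bc, \by)$ (swapping the variable and parameter slots, so that $\bc$ becomes a parameter from $\BJ$) is a formula over $\BJ$; hence $\tp (\bb / \BJ) = \tp (\bb' / \BJ)$ forces $\models \phi[\bc, \bb]$ if and only if $\models \phi[\bc, \bb']$. But $\bc \notin S_1$ gives $\models \phi[\bc, \bb]$, so $\models \phi[\bc, \bb']$, contradicting $\bc \notin S_2$, which gives $\models \neg \phi[\bc, \bb']$. This contradiction shows $p$ does not split over $\BJ$, and as $B$ was arbitrary, $\BI$ is $\chi$-based on $\BJ$.

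This is fundamentally a pigeonhole argument, so I do not anticipate a genuine obstacle; the only delicate point is the bookkeeping with the definitions — recognizing that each average-membership bounds a set of size $< \chi$, and that the substitution turning $\phi (\bx, \bb)$ into the $\BJ$-formula $\phi (\bc, \by)$ is exactly what lets the nonsplitting witnesses $\bb, \bb'$ be forced to agree on the surviving element $\bc$.
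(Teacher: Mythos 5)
Your proof is correct and is essentially the same argument as the paper's: both hinge on the parameter-swapping trick (viewing $\phi(\bc,\by)$ as a formula over $\BJ$ so that $\tp(\bb/\BJ)=\tp(\bb'/\BJ)$ transfers satisfaction from $\bb$ to $\bb'$) combined with the counting bound $<\chi$ on the exception sets. The only difference is organizational — the paper first notes $\Av_\chi(\BI/B)\subseteq\Av_\chi(\BJ/B)$ and exhibits $\chi$-many witnesses, while you pick a single element of $\BJ$ avoiding both exception sets — which is an immaterial variation.
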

\begin{proof}
  Let $B$ be a set. Let $p := \Av_{\chi} (\BI / B)$. Note that $p \subseteq \Av_{\chi} (\BJ / B)$. Let $\bb, \bb' \in B$ be such that $\tp (\bb / \BJ) = \tp (\bb' / \BJ)$. Assume $\phi (\bx, \bb) \in p$. Then since $p \subseteq \Av_\chi (\BJ / B)$, let $\ba \in \BJ$ be such that $\models \phi[\ba, \bb]$. Since $\ba \in \BJ$, $\models \phi[\ba, \bb']$. Since there are at least $\chi$-many such $\ba$'s, $\neg \phi (\bx, \bb') \notin p$.
\end{proof}

We know that \emph{at most} one of $\phi$, $\neg \phi$ is in the average. It is very desirable to have that \emph{exactly} one is in, i.e.\ that the average is a \emph{complete} type. This is the purpose of the next definition. Recall from the beginning of this section that $\BI$ always denotes a sequence of elements \emph{of the same arity less than $\kappa$}.

\begin{defin}[V.A.2.1 in \cite{shelahaecbook2}]
  A sequence $\BI$ is said to be \emph{$\chi$-convergent} if $|\BI| \ge \chi$ and for any set $A$, $\Av_{\chi} (\BI / A)$ is a complete type over $A$. That is, whenever $\phi (\bx)$ is a formula with $\ell (\bx)$ equal to the arity of all the elements of $\BI$, then we have that exactly one of $\phi$ or $\neg \phi$ is in $\Av_{\chi} (\BI / A)$.
\end{defin}

\begin{remark}[Monotonicity]\label{avg-monot}
  If $\BI$ is $\chi$-convergent, $\BJ \subseteq \BI$, and $|\BJ| \ge \chi' \ge \chi$, then for any set $A$, $\Av_{\chi} (\BI / A) = \Av_{\chi'} (\BJ / A)$. In particular, $\BJ$ is $\chi'$-convergent.
\end{remark}

Recall \cite[III.1.7(1)]{shelahfobook} that if $T$ is a first-order stable theory and $\BI$ is an infinite sequence of indiscernibles (in its monster model), then $\BI$ is $\aleph_0$-convergent. The proof relies heavily on the compactness theorem. We would like a replacement of the form ``if $\mathcal{N}$ has some stability and $\BI$ is nice, then it is convergent. The next result is key. It plays the same role as the ability to extract indiscernible subsequences in first-order stable theories.

\begin{fact}[The convergent set existence theorem: V.A.2.8 in \cite{shelahaecbook2}]\label{conv-thm}
  Let $\chi_0 \ge (|L| + 2)^{<\kappa}$ be such that $\mathcal{N}$ does not have the order property of length $\chi_0^+$. Let $\mu$ be an infinite cardinal such that $\mu = \mu^{\chi_0} + 2^{2^{\chi_0}}$.

  Let $\BI$ be a sequence with $|\BI| = \mu^+$. Then there is $\BJ \subseteq \BI$ of size $\mu^+$ which is $\chi_0$-convergent.
\end{fact}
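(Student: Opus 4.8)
The plan is to realize $\BJ$ as a set of elements of $\BI$ that all satisfy one common complete type over $\mathcal{N}$, exploiting that convergence only constrains truth values formula-by-formula. The first thing to record is that, for any sequence, being $\chi_0$-convergent is equivalent to the single condition that for every formula $\phi (\bx, \by)$ and every parameter $\bar c \in \fct{<\kappa}{\mathcal{N}}$, one of the sets $\{\bb \in \BJ : \models \phi[\bb, \bar c]\}$, $\{\bb \in \BJ : \models \neg \phi[\bb, \bar c]\}$ has size less than $\chi_0$; indeed whether a formula lies in $\Av_{\chi_0} (\BJ / A)$ depends only on its parameters and not on the ambient $A$, so completeness of $\Av_{\chi_0} (\BJ / A)$ for all $A$ is a condition quantifying only over $\phi$ and $\bar c$. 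In particular, \emph{any} set of realizations of a single complete type over $\mathcal{N}$ is automatically $\chi_0$-convergent, since each formula then has an empty small side. Thus it suffices to locate $\mu^+$ elements of $\BI$ realizing one common complete type over $\mathcal{N}$.

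The engine for producing these is counting together with nonsplitting. First I would use that $\mathcal{N}$ has no order property of length $\chi_0^+$, together with $\mu = \mu^{\chi_0}$, to get that $\mathcal{N}$ is stable in $\mu$ (the number of complete types over a set of size $\le \mu$ is $\le \mu^{\chi_0} = \mu$) and that types have small nonsplitting bases; the role of $2^{2^{\chi_0}}$ is to feed an Erd\H{o}s--Rado extraction, since $|\BI| = \mu^+ \ge (2^{2^{\chi_0}})^+$ lets me thin $\BI$ to a homogeneous subsequence $\BI^*$ whose internal pattern is order-free, so that its average $p^* := \Av_{\chi_0} (\BI^* / \mathcal{N})$ is complete and, by Lemma \ref{average-based}, does not split over a fixed base $B_0$ of size $\chi_0$. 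The key reduction is Lemma \ref{ns-uq}: a complete type over $\mathcal{N}$ not splitting over $B_0$ is determined by its restriction to any $|B_0|^+$-saturated $M \supseteq B_0$ of size $\le \mu$, and by stability there are only $\le \mu$ such restrictions; hence there are at most $\mu$ complete types over $\mathcal{N}$ not splitting over $B_0$. If one can arrange that $\mu^+$ of the elements of $\BI$ have type over $\mathcal{N}$ not splitting over this $B_0$, then a pigeonhole yields $\mu^+$ of them realizing a single such type, and these form the desired $\chi_0$-convergent $\BJ$. Throughout, Lemma \ref{average-realize} is what guarantees the averages are actually realized \emph{inside} $\BI$ rather than only in $\mathcal{N}$, the needed bound $(|A| + |L| + 2)^{<\kappa} \le \mu$ again coming from $\mu = \mu^{\chi_0}$.

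The step I expect to be the main obstacle is reconciling the demand for full size $\mu^+$ with convergence against arbitrary external parameters $\bar c$, that is, ensuring that enough elements of $\BI$ genuinely share a complete type over all of $\mathcal{N}$ and not merely over a small model. Matching types only over a saturated $M$ is insufficient: based-ness forbids $\Av_{\chi_0} (\BJ / \cdot)$ from flipping between $B_0$-conjugate parameters, but by itself it does not force the small side to be small, so one must rule out the scenario where some $\phi (\bx, \bar c)$ has both sides of size $\ge \chi_0$ in $\BJ$. This is exactly where the no order property of length $\chi_0^+$ must be pushed quantitatively: persistent $\chi_0$-splitting of a fixed pair $(\bar c, \bar c')$ of $B_0$-conjugates by too many elements of $\BJ$ would manufacture an order of length $\chi_0^+$, a contradiction. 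Making this comparison correct --- and, in the recursive variant that builds $\BJ$ one element at a time as an average-realizing (Morley-like) sequence, preventing the small sides from accumulating to $\chi_0$ across the limit stages of a length-$\mu^+$ recursion, which forces each newly chosen element to realize the running average with a type nonsplitting over $B_0$ via Lemma \ref{ns-uq} --- is the technical heart of the argument and the place where the cardinal arithmetic $\mu = \mu^{\chi_0} + 2^{2^{\chi_0}}$ is used in full.
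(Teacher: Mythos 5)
Your opening reduction is where the argument breaks, and it breaks irreparably. You propose to find $\mu^+$ elements of $\BI$ realizing one common complete type over $\mathcal{N}$, observing that such a set would trivially be $\chi_0$-convergent. But the tuples of $\BI$ lie \emph{inside} $\mathcal{N}$, and a type over $\mathcal{N}$ is a type over the full universe, whose parameters include those very tuples. For distinct $\bb, \bb' \in \BI$, the quantifier-free formula $\bx = \bb'$ belongs to $\tp (\bb' / |\mathcal{N}|)$ but not to $\tp (\bb / |\mathcal{N}|)$; hence no two distinct members of $\BI$ share a complete type over $\mathcal{N}$, and the set you aim to extract has at most one element. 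The same confusion undermines the pigeonhole step: the type over $\mathcal{N}$ of any $\bb \in \BI$ is algebraic (it contains $\bx = \bb$), and such a type splits over every small $B_0$ as soon as $\tp (\bb / B_0)$ has a second realization --- take $\phi (\bx, \by) := (\bx = \by)$ with $\bb$ and that other realization as witnesses. So the collection ``elements of $\BI$ whose type over $\mathcal{N}$ does not split over $B_0$'', which you want to partition into $\le \mu$ classes, is essentially empty. Your counting claim (via Lemma \ref{ns-uq} and stability, there are at most $\mu$ complete types over $\mathcal{N}$ not splitting over $B_0$) is fine in spirit, but those types are never realized \emph{inside} $\mathcal{N}$: the average of a convergent sequence, for instance, contains $\bx \neq \bc$ for every $\bc$, so it cannot be the type of any member of $\BI$.

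The conceptual point being missed is that convergence is strictly weaker than common realization of a type: it demands that for each single formula the minority side of $\BJ$ have size $< \chi_0$, where the exceptional set may vary with the formula. Over the parameter set $|\mathcal{N}|$ these exceptional sets \emph{necessarily} union up to all of $\BJ$: each $\bb \in \BJ$ is the (sole) exception for the formula $\bx \neq \bb$, which lies in $\Av_{\chi_0} (\BJ / \mathcal{N})$. So no thinning of $\BI$ can ever produce exact agreement, and a correct proof must handle the approximate notion directly --- for instance by thinning $\BI$ via Erd\H{o}s--Rado partition relations and by counting types over sets of size $\le \chi_0$ (this is where $\mu = \mu^{\chi_0} + 2^{2^{\chi_0}}$ genuinely enters) to obtain a sufficiently indiscernible subsequence over a small base, and then arguing that a formula cutting that subsequence into two pieces each of size $\ge \chi_0$ would yield the order property of length $\chi_0^+$. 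Your first observation (that convergence is a formula-by-formula condition) is correct, and your closing paragraph gestures at the right ingredients, but everything is mounted on the impossible reduction, so the proposal does not constitute a proof. For the record, the paper itself gives no proof of this statement: it is quoted as Fact from Shelah's V.A.2.8, so the comparison here is purely about the internal correctness of your argument.
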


However having to extract a subsequence every time is too much for us. One issue is with the cardinal arithmetic condition on $\mu$: what if we have a sequence of length $\mu^+$ when $\mu$ is a singular cardinal of low cofinality? We work toward proving a more constructive result: \emph{Morley} sequences (defined below) are always convergent. The parameters represent respectively a bound on the size of $A$, the degree of saturation of the models, and the length of the sequence. They will be assigned default values in Hypothesis \ref{midway-hyp}.

\begin{defin}\label{coherent-def}
  We say $\seq{\ba_i : i \in I} \smallfrown \seq{N_i : i \in I}$ is a \emph{$(\chi_0, \chi_1, \chi_2)$-Morley sequence for $p$ over $A$} if:
  
  \begin{enumerate}
  \item $\chi_0 \le \chi_1 \le \chi_2$ are infinite cardinals, $I$ is a linear order, $A$ is a set, $p (\bx)$ is a set of formulas with parameters and $\ell (\bx) < \kappa$, and there is $\alpha < \kappa$ such that for all $i \in I$, $\ba_i \in \fct{\alpha}{\mathcal{N}}$.
  \item For all $i \in I$, $A \subseteq |N_i|$ and $|A| < \chi_0$.
  \item $\seq{N_i : i \in I}$ is increasing, and each $N_i$ is $\chi_1$-saturated.
  \item For all $i \in I$, $\ba_i$ realizes\footnote{Note that $\dom p$ might be smaller than $N_i$.} $p \rest N_i$ and for all $j > i$ in $I$, $\ba_i \in \fct{\alpha}{N_{j}}$.
  \item $i < j$ in $I$ implies $\ba_i \neq \ba_j$.
  \item $|I| \ge \chi_2$.
  \item\label{coherence-cond} For all $i < j$ in $I$, $\tp (\ba_i / N_i) = \tp (\ba_j / N_i)$.
    \item For all $i \in I$, $\tp (\ba_i / N_i)$ does not split over $A$.
  \end{enumerate}

  When $p$ or $A$ is omitted, we mean ``for some $p$ or $A$''. We call $\seq{N_i : i \in I}$ the \emph{witnesses} to $\BI := \seq{\ba_i : i \in I}$ being Morley, and when we omit them we simply mean that $\BI \smallfrown \seq{N_i : i \in I}$ is Morley for some witnesses $\seq{N_i : i \in I}$.
\end{defin}
\begin{remark}[Monotonicity]\label{coherent-monot}
  Let $\seq{\ba_i : i \in I} \smallfrown \seq{N_i : i \in I}$ be $(\chi_0, \chi_1, \chi_2)$-Morley for $p$ over $A$. Let $\chi_0' \ge \chi_0$, $\chi_1' \le \chi_1$, and $\chi_2' \le \chi_2$. Let $I' \subseteq I$ be such that $|I'| \ge \chi_2'$, then $\seq{\ba_i : i \in I'} \smallfrown \seq{N_i : i \in I'}$ is $(\chi_0', \chi_1', \chi_2')$-Morley for $p$ over $A$.
\end{remark}
\begin{remark}
  By the proof of \cite[I.2.5]{shelahfobook}, a Morley sequence is indiscernible (this will not be used).
\end{remark}

The next result tells us how to build Morley sequences inside a given model:

\begin{lem}\label{coherent-existence}
  Let $A \subseteq |M|$ and let $\chi \ge (|L| + 2)^{<\kappa}$ be such that $|A| \le \chi$. Let $p \in \Ss^{\alpha} (M)$ be nonalgebraic (that is, $a_i \notin |M|$ for all $i < \alpha$ for any $\ba$ realizing $p$) such that $p$ does not split over $A$, and let $\mu > \chi$. If:

  \begin{enumerate}
  \item $M$ is $\mu^+$-saturated.
  \item $\mathcal{N}$ is stable in $\mu$.
  \end{enumerate}

  Then there exists $\seq{\ba_i : i < \mu^+} \smallfrown \seq{N_i : i < \mu^+}$ inside $M$ which is $(\chi^+, \chi^+, \mu^+)$-Morley for $p$ over $A$.
\end{lem}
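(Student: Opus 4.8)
The plan is to construct the sequences $\seq{N_i : i < \mu^+}$ and $\seq{\ba_i : i < \mu^+}$ by induction on $i$, maintaining the invariant that each $N_i \in \mathcal{S}$ is a $\chi^+$-saturated model with $A \subseteq |N_i| \subseteq |M|$ and $\|N_i\| \le \mu$, that $\seq{N_i : i < \mu^+}$ is increasing, that $\ba_j \in N_{j+1}$ for all $j$, and that $\ba_i \in |M| \setminus |N_i|$ realizes $p \rest N_i$. Note $\chi^+ \le \mu$ throughout, since $\mu > \chi$.

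The one non-obvious ingredient, which I would isolate first, is that inside $M$ one can always close off a small set to a $\chi^+$-saturated model of size $\mu$: for every $B_0 \subseteq |M|$ with $|B_0| \le \mu$ there is $N \in \mathcal{S}$ with $B_0 \subseteq |N| \subseteq |M|$, $\|N\| \le \mu$, and $N$ $\chi^+$-saturated. To prove this I would build an increasing chain $\seq{N^\xi : \xi < \chi^+}$ of models of size $\le \mu$ inside $M$, with $N^0 \supseteq B_0$ and with $N^{\xi + 1}$ realizing every $q \in \Ss^1 (N^\xi)$. This is possible because stability in $\mu$ gives $|\Ss^1 (N^\xi)| \le \mu$, each such $q$ is realized in $M$ (as $M$ is $\mu^+$-saturated and $\|N^\xi\| \le \mu < \mu^+$), and the Löwenheim-Skolem axiom absorbs these $\le \mu$ realizations into a model of size $\le \mu$ contained in $M$. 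Then $N := \bigcup_{\xi < \chi^+} N^\xi$ lies in $\mathcal{S}$ by closure under chains and has size $\le \mu$ since $\chi^+ \le \mu$. It is $\chi^+$-saturated for types of length one: any $D \subseteq |N|$ with $|D| \le \chi$ lies in some $N^\xi$ by regularity of $\chi^+$, and any $q \in \Ss^1 (D)$ extends to $\tp (c / N^\xi)$ for a realization $c$ of $q$ in $\mathcal{N}$, which is already realized in $N^{\xi + 1} \subseteq N$. Since $\chi^+ > (|L| + 2)^{<\kappa}$, Hypothesis \ref{sat-hyp} upgrades this to $\chi^+$-saturation for types of length less than $\kappa$. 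The key point is that realizing types over the \emph{whole} model $N^\xi$ (only $\mu$-many, by stability) rather than over all its $\le \chi$-subsets sidesteps any cardinal arithmetic on $\mu^{\chi}$.

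With this in hand the induction is routine. I would take $N_0$ to be a $\chi^+$-saturated model of size $\mu$ inside $M$ containing $A$ (using $|A| \le \chi \le \mu$). At successors, first pick $\ba_i \in |M|$ realizing $p \rest N_i$ --- possible since $\|N_i\| \le \mu < \mu^+$, $M$ is $\mu^+$-saturated, and $\ell (\bx) = \alpha < \kappa$ --- and observe that since $p$ is nonalgebraic no tuple from $N_i$ realizes the complete type $p \rest N_i$ (such a tuple $\bb$ would force $\bx = \bb \in p \rest N_i \subseteq p$), so in fact $\ba_i \in |M| \setminus |N_i|$; then let $N_{i+1}$ be a $\chi^+$-saturated model of size $\mu$ inside $M$ containing $N_i \cup \ba_i$. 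At a limit $i$, I would re-saturate: take $N_i$ to be a $\chi^+$-saturated model of size $\mu$ inside $M$ containing $\bigcup_{j < i} N_j$ (which has size $\le \mu$). Continuity of the chain is not required by Definition \ref{coherent-def}, so re-saturating at limits is harmless, and it is precisely what keeps every $N_i$ genuinely $\chi^+$-saturated.

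It then remains to check the eight clauses of Definition \ref{coherent-def} for $(\chi_0, \chi_1, \chi_2) = (\chi^+, \chi^+, \mu^+)$, which all fall out of the bookkeeping: clauses (1), (2), (3), (6) are immediate from the construction and $\mu > \chi$; for (4), $\ba_i$ realizes $p \rest N_i$ and $\ba_i \in N_{i+1} \subseteq N_j$ for $j > i$; for (5), $\ba_i \in |N_j|$ while $\ba_j \notin |N_j|$ forces $\ba_i \neq \ba_j$ when $i < j$; for (7), both $\tp (\ba_i / N_i)$ and $\tp (\ba_j / N_i)$ equal the complete type $p \rest N_i$, since $\ba_j$ realizes $p \rest N_j \supseteq p \rest N_i$; and for (8), $\tp (\ba_i / N_i) = p \rest N_i$ does not split over $A$ by monotonicity of splitting, because a splitting of $p \rest N_i$ over $A$ would use parameters from $N_i \subseteq |M|$ and hence witness that $p$ itself splits over $A$. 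The only real obstacle is the saturated-closure step isolated in the second paragraph; everything else is pure bookkeeping.
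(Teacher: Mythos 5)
Your proof is correct and follows essentially the same route as the paper's: both construct the sequences by induction, with the key step being the closure of a size-$\le\mu$ subset of $M$ to a $\chi^+$-saturated model of size $\le \mu$ inside $M$ via a chain of length $\chi^+$ whose successor stages realize all length-one types over the previous stage (using stability in $\mu$, the $\mu^+$-saturation of $M$, the L\"owenheim-Skolem axiom, and Hypothesis \ref{sat-hyp} to upgrade from length-one saturation to saturation for types of length less than $\kappa$), and both get distinctness of the $\ba_i$ from nonalgebraicity of $p$. The only difference is presentational: the paper leaves the verification of the clauses of Definition \ref{coherent-def} to the reader, whereas you spell them out.
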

\begin{proof}
  We build $\seq{\ba_i : i < \mu^+}$ and $\seq{N_i : i < \mu^+}$ increasing such that for all $i < \mu^+$:

  \begin{enumerate}
    \item $A \subseteq |N_0|$.
    \item $|N_i| \subseteq |M|$.
    \item $\|N_i\| \le \mu $.
    \item $N_i$ is $\chi^+$-saturated.
    \item $\ba_i \in \fct{\alpha}{N_{i + 1}}$.
    \item $\ba_i$ realizes $p \rest N_i$.
  \end{enumerate}

  This is enough by definition of a Morley sequence (note that for all $i < \mu^+$, $\ba_i \notin \fct{\alpha}{N_i}$ by nonalgebraicity of $p$, so $\ba_i \neq \ba_j$ for all $j < i$).

  This is possible: assume inductively that $\seq{\ba_j : j < i} \smallfrown \seq{N_j : j < i}$ has been defined. Pick $N_i \subseteq M$ which is $\chi^+$-saturated, has size $\le \mu$, and contains $A \cup \bigcup_{j < i} N_j$. Such an $N_i$ exists: simply build an increasing chain $\seq{M_k : k < \chi^+}$ with $M_0 := A \cup \bigcup_{j < i} N_j$, $\|M_k\| \le \mu$, and $M_k$ realizing all elements of $\Ss (\bigcup_{k' < k} M_{k'})$ (this is where we use stability in $\mu$). Then $N_i := \bigcup_{k < \chi^+} M_k$ is as desired (we are using Hypothesis \ref{sat-hyp} to deduce that it is $\chi^+$-saturated for types of length less than $\kappa$). Now pick $\ba_i \in \fct{\alpha}{M}$ realizing $p \rest N_i$ (exists by saturation of $M$). 
\end{proof}

Before proving that Morley sequences are convergent (Theorem \ref{coherence-convergence}), we prove several useful lemmas:

\begin{lem}\label{coherent-avg}
  Let $\BI := \seq{\ba_i : i \in I}$ be $(\chi_0, \chi_1, \chi)$-Morley, as witnessed by $\seq{N_i : i \in I}$. Let $i \in I$ be such that $[j, \infty)_I$ has size at least $\chi$. Then $\Av_\chi (\BI / N_i) \subseteq \tp (\ba_i / N_i)$.
\end{lem}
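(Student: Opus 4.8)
The plan is to argue by contradiction, using the coherence condition (\ref{coherence-cond}) to propagate satisfaction of a formula from $\ba_i$ to the entire tail of the sequence (I read the ``$[j, \infty)_I$'' in the statement as ``$[i, \infty)_I$''). First I would fix an arbitrary formula $\phi (\bx) \in \Av_\chi (\BI / N_i)$; the goal is to show $\phi \in \tp (\ba_i / N_i)$, i.e.\ $\models \phi[\ba_i]$. For a contradiction, suppose instead that $\models \neg \phi[\ba_i]$.

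The key step is then to observe that $\phi$ has all its parameters in $N_i$ (since it is a formula over $N_i$) and that, by the coherence condition (\ref{coherence-cond}), $\tp (\ba_j / N_i) = \tp (\ba_i / N_i)$ for every $j > i$ in $I$ (and trivially for $j = i$). Hence $\models \neg \phi[\ba_j]$ for every $j \in [i, \infty)_I$. Because the entries of a Morley sequence are distinct ($i < j$ implies $\ba_i \neq \ba_j$), the family $\{\ba_j \mid j \in [i, \infty)_I\}$ is a set of at least $\chi$ distinct elements of $\BI$, and each of them satisfies $\neg \phi$. Therefore $\{\bb \in \BI \mid \models \neg \phi[\bb]\}$ has size at least $\chi$, which directly contradicts $\phi \in \Av_\chi (\BI / N_i)$. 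This forces $\models \phi[\ba_i]$, and since $\phi$ was arbitrary we conclude $\Av_\chi (\BI / N_i) \subseteq \tp (\ba_i / N_i)$.

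The argument is a direct counting computation, so I do not expect a genuine obstacle; the proof uses neither saturation nor stability. The only points requiring care are: (i) that the coherence condition may be invoked, which needs the parameters of $\phi$ to lie in $N_i$ (automatic, as $\phi$ is over $N_i$) and the indices to range over $j \geq i$; and (ii) that the tail genuinely contributes $\chi$-many \emph{distinct} elements of $\BI$, which is precisely where the hypothesis that $[i, \infty)_I$ has size at least $\chi$ combines with the distinctness clause of Definition \ref{coherent-def}. Note that the full strength $|I| \geq \chi_2 = \chi$ is not by itself enough; it is the tail assumption on $i$ that is doing the work.
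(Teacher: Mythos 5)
Your proof is correct and is essentially the same argument as the paper's, just in contrapositive form: the paper picks some $j \in [i,\infty)_I$ with $\models \phi[\ba_j]$ (possible since fewer than $\chi$ members of $\BI$ satisfy $\neg\phi$ while the tail has size $\geq \chi$) and pulls satisfaction back to $\ba_i$ via condition (\ref{coherence-cond}), whereas you push $\neg\phi$ forward along the tail and contradict the definition of the average. You also correctly read the ``$[j,\infty)_I$'' in the statement as the typo it is, and your remark about distinctness of the $\ba_j$'s is exactly the point that makes the counting legitimate.
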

\begin{proof}
  Let $\phi (\bx)$ be a formula over $N_i$ with $\ell (\bx) = \ell (\ba_i)$. Assume $\phi (\bx) \in \Av_\chi (\BI / N_i)$. By definition of average and assumption there exists $j \in [i, \infty)$ such that $\models \phi[\ba_j]$. By (\ref{coherence-cond}) in Definition \ref{coherent-def}, $\models \phi[\ba_i]$ so $\phi (\bx) \in \tp (\ba_i / N_i)$.
\end{proof}

\begin{lem}\label{lo-cut}
  Let $I$ be a linear order and let $\chi < |I|$ be infinite. Then there exists $i \in I$ such that both $(-\infty, i]_I$ and $[i, \infty)_I$ have size at least $\chi$.
\end{lem}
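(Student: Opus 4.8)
The plan is to argue by collecting the ``bad'' points and showing there are few of them. For each $i \in I$ consider the two sets
\[
L := \{i \in I : |(-\infty, i]_I| < \chi\}, \qquad R := \{i \in I : |[i, \infty)_I| < \chi\}.
\]
An element $i$ fails the conclusion of the lemma exactly when $i \in L \cup R$, so it suffices to prove $|L| \le \chi$ and, by the symmetric argument in the reversed order of $I$, $|R| \le \chi$. Granting this, $|L \cup R| \le \chi < |I|$, so there is some $i \in I \setminus (L \cup R)$; any such $i$ has $|(-\infty, i]_I| \ge \chi$ (as $i \notin L$) and $|[i, \infty)_I| \ge \chi$ (as $i \notin R$), which is precisely what is required. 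Thus no genuine contradiction framing is even needed: the whole statement reduces to the single cardinality bound $|L| \le \chi$.

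The heart of the matter — and the step I expect to be the only real obstacle — is this bound. First I would observe that $L$ is an initial segment of $I$: if $i \in L$ and $j < i$ then $(-\infty, j]_I \subseteq (-\infty, i]_I$, so $j \in L$; and by definition every element of $L$ has fewer than $\chi$ predecessors in $I$. The key idea is to control the cofinality of $L$ by extracting a strictly increasing cofinal sequence $\seq{c_\alpha : \alpha < \gamma}$ inside $L$ (every linear order has a well-ordered cofinal subset). For each $\alpha < \gamma$ the set $\{c_\beta : \beta \le \alpha\}$ is contained in $(-\infty, c_\alpha]_I$ and hence has size $< \chi$; since $|\{c_\beta : \beta \le \alpha\}| = |\alpha|$ for infinite $\alpha$ and $\chi$ is a cardinal, this forces $|\alpha| < \chi$ for every $\alpha < \gamma$, and therefore $\gamma \le \chi$, so $|\gamma| \le \chi$.

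Finally I would assemble the pieces: because the sequence is cofinal in the initial segment $L$, we have $L = \bigcup_{\alpha < \gamma} (-\infty, c_\alpha]_I$, a union of at most $\chi$ sets each of size $< \chi$, whence $|L| \le |\gamma| \cdot \chi \le \chi$ (using that $\chi$ is infinite). The remaining care is only with the degenerate cases, which are immediate: if $L = \emptyset$ the bound is trivial, and if $L$ has a maximum $m$ then $L \subseteq (-\infty, m]_I$ already has size $< \chi$, so no cofinal sequence is needed. Running the identical argument on the order-reversal of $I$ yields $|R| \le \chi$, and the conclusion follows as described above.
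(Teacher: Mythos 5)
Your proof is correct and takes essentially the same route as the paper: both isolate the two sets of bad points (your $L$, $R$; the paper's $I_0$, $I_1$, defined identically) and bound them via a well-ordered cofinal sequence together with the fact that each bad point has fewer than $\chi$ predecessors (resp.\ successors). The only difference is presentational: the paper reduces without loss of generality to $|I| = \chi^+$ and argues by contradiction with a case split on the cofinality of $I_0$, whereas you prove the direct bounds $|L| \le \chi$ and $|R| \le \chi$, which packages the same counting argument slightly more cleanly.
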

\begin{proof}
  Without loss of generality, $|I| = \chi^+$. Let $I_0 := \{i \in I \mid |(-\infty, i]_I| < \chi\}$ and let $I_1 := \{i \in I \mid |[i, \infty)_I| < \chi\}$. Assume the conclusion of the lemma fails. Then $I_0 \cup I_1 = I$. Thus either $|I_0| = \chi^+$ or $|I_1| = \chi^+$. Assume that $|I_0| = \chi^+$, the proof in case $|I_1| = \chi^+$ is symmetric. Let $\delta := \cf{I_0}$ and let $\seq{a_\alpha \in I_0 : \alpha < \delta}$ be a cofinal sequence. If $\delta < \chi^+$, then, since $I_0 = \cup_{\alpha<\delta} (-\infty, a_\alpha]_I$ has size $\chi^+$, there is $\alpha < \delta$ such that $|(-\infty, a_\alpha)_I| = \chi^+$.  If $\delta \geq \chi^+$, then $|(-\infty, a_\chi)_I| \geq \chi$.  Either of these contradict the definition of $I_0$.
\end{proof}

\begin{lem}\label{morley-based}
  Let $\BI$ be $(\chi^+, \chi^+, \chi^+)$-Morley over $A$ (for some type). If $\BI$ is $\chi$-convergent, then $\BI$ is $\chi$-based on $A$.
\end{lem}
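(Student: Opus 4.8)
The plan is to argue by contradiction, exploiting that over a suitable witness model the average is forced to equal the nonsplitting type $\tp(\ba_i/N_i)$. Suppose $\BI$ is $\chi$-convergent but not $\chi$-based on $A$, so for some $B$ the average $\Av_\chi(\BI/B)$ splits over $A$. A splitting instance is witnessed by a formula $\phi$ and tuples $\bb,\bb'$ (of length $<\kappa$) with $\tp(\bb/A)=\tp(\bb'/A)$ and $\phi(\bx,\bb),\neg\phi(\bx,\bb')\in\Av_\chi(\BI/B)$. Unwinding the definition of the average, this says that all but $<\chi$ of the $\ba_k\in\BI$ satisfy $\phi[\ba_k,\bb]\wedge\neg\phi[\ba_k,\bb']$; since $|\BI|\ge\chi^+$, there are in fact $\ge\chi^+$ such indices.

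I would then pin down a witness model. Using Lemma \ref{lo-cut} (applicable since $|I|\ge\chi^+>\chi$), choose $i\in I$ with both $(-\infty,i]_I$ and $[i,\infty)_I$ of size $\ge\chi$. Since $[i,\infty)_I$ has size $\ge\chi$, Lemma \ref{coherent-avg} gives $\Av_\chi(\BI/N_i)\subseteq\tp(\ba_i/N_i)$, and $\chi$-convergence (completeness of the average) upgrades this to $\Av_\chi(\BI/N_i)=\tp(\ba_i/N_i)$; by the last clause of Definition \ref{coherent-def} this type does not split over $A$. Moreover, as $(-\infty,i)_I$ contains $\ge\chi$ indices $m$ with $\ba_m\in N_i$, the subsequence $\BJ:=\seq{\ba_m:m<i}$ has range inside $N_i$ and size $\ge\chi$, so Lemma \ref{average-based} shows that $\Av_\chi(\BI/C)$ is based on $\BJ$, hence does not split over $N_i$, for every $C$.

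Next I would pull the external parameters inside $N_i$. Because $N_i$ is $\chi^+$-saturated, $|A|\le\chi$, and $\ell(\bb\bb')<\kappa$, I can realize $\tp(\bb\bb'/A)$ by some $\bd\bd'\in N_i$, so that $\tp(\bd/A)=\tp(\bd'/A)=\tp(\bb/A)$. Since $\tp(\ba_i/N_i)$ does not split over $A$ and $\bd,\bd'\in N_i$, we get $\models\phi[\ba_i,\bd]\Leftrightarrow\models\phi[\ba_i,\bd']$, and by coherence (condition \ref{coherence-cond} of Definition \ref{coherent-def}) the same equivalence holds with $\ba_i$ replaced by any $\ba_k$ with $k\ge i$. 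The goal is to reconcile these internal truth values with the external data above; the natural route is to identify $\Av_\chi(\BI/(B\cup N_i))$ with the canonical nonsplitting extension $r$ of $\tp(\ba_i/N_i)$ to $B\cup N_i$, defined by $\psi(\bx,\bc)\in r$ iff $\psi(\bx,\bc'')\in\tp(\ba_i/N_i)$ for any $\bc''\in N_i$ realizing $\tp(\bc/A)$. This $r$ is well defined and does not split over $A$ (by $\chi^+$-saturation of $N_i$ and Lemma \ref{ns-uq}), and it agrees with the average on $N_i$. Identifying the average with $r$ and restricting to $B$ would then produce a nonsplitting type, contradicting the assumed split.

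The hard part will be precisely this identification, i.e.\ bridging the behaviour of the average over the external parameters $\bb,\bb'$ to the internal type $\tp(\ba_i/N_i)$. Nonsplitting of the single type $\tp(\ba_k/N_k)$ over $A$ controls only parameters lying inside $N_k$, so no element-by-element comparison can reach $\bb,\bb'\notin N_k$; the bridge must come instead from the \emph{finite satisfiability} of the average in $N_i$ (from Lemma \ref{average-based}) together with uniqueness of nonsplitting extensions (Lemma \ref{ns-uq}) and the ambient stability (absence of a long order property), which should force the finitely-satisfiable average and the canonical extension $r$ to coincide. I expect this agreement step — not the bookkeeping around the witness model — to be the crux of the proof.
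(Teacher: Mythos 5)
Your proposal is correct and, once unwound, follows essentially the same route as the paper's proof: both fix $i$ by Lemma \ref{lo-cut}, use Lemma \ref{average-based} to make the average nonsplitting over a small subset of $|N_i|$, use Lemma \ref{coherent-avg} plus convergence to get $\Av_{\chi} (\BI / N_i) = \tp (\ba_i / N_i)$, and finish with uniqueness for nonsplitting (Lemma \ref{ns-uq}) with $N_i$ as the witness model. The difference is in the endgame: the paper first moves one splitting witness into $N_i$ (using saturation and completeness of the average) and then builds a one-formula ``deviant'' extension of $p \rest N_i$, where $p := \Av_{\chi} (\BI / \mathcal{N})$, namely $p \rest N_i$ together with $\phi (\bx, \bb')$ (note $\neg \phi (\bx, \bb') \in p$), proves by hand that this set does not split over $A$, and then contradicts uniqueness. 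You instead form the canonical nonsplitting extension $r$ of $\tp (\ba_i / N_i)$ and identify the average with $r$; this avoids the paper's WLOG step and its combinatorial claim, and is arguably the cleaner decomposition.

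Two comments. First, the step you flag as ``the crux'' is in fact immediate from ingredients you have already assembled; no appeal to stability or the order property is needed at this point. Fix $\BJ_0 \subseteq \BJ$ of size exactly $\chi$ and let $A' := A \cup \text{ran} (\BJ_0)$, so $A' \subseteq |N_i|$ and $|A'| \le \chi$. Then $q := \Av_{\chi} (\BI / (B \cup |N_i|))$ and $r$ are complete sets of formulas over $B \cup |N_i|$, both restrict to $\tp (\ba_i / N_i)$ on $N_i$, and both do not split over $A'$: for $q$ this is Lemma \ref{average-based} plus monotonicity of nonsplitting in the base set, and for $r$ it is monotonicity from nonsplitting over $A \subseteq A'$. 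Since $N_i$ is $\chi^+$-saturated, hence $|A'|^+$-saturated, Lemma \ref{ns-uq} yields $q = r$; as $\phi (\bx, \bb), \neg \phi (\bx, \bb') \in \Av_{\chi} (\BI / B) \subseteq q$ with $\tp (\bb / A) = \tp (\bb' / A)$, this contradicts nonsplitting of $r$ over $A$. Second, one caveat in your write-up: you weaken ``$\Av_{\chi} (\BI / C)$ is based on $\BJ$'' to ``does not split over $N_i$''. The weaker statement is unusable, because Lemma \ref{ns-uq} requires the witness model to be $|A^{*}|^+$-saturated over the chosen base $A^{*}$, and $N_i$ is not known to be $\|N_i\|^+$-saturated; you must keep the base of size at most $\chi$, exactly as above (this is why the paper's $A'$ is taken of size at most $\chi$).
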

\begin{proof}
  Let $\BI := \seq{\ba_i : i \in I}$ and let $\seq{N_i : i \in I}$ witness that $\BI$ is $\chi$-Morley over $A$. By assumption, $|I| \ge \chi^+$, so let $i \in I$ be as given by Lemma \ref{lo-cut}: both $(-\infty, i]_I$ and $[i, \infty)_I$ have size at least $\chi$. By Lemma \ref{average-based} and the definition of $i$, we can find $A' \subseteq |N_i|$ containing $A$ of size at most $\chi$ such that $\BI$ is $\chi$-based on $A'$.

  Let $p := \Av_{\chi} (\BI / \mathcal{N})$. Assume for a contradiction that $p$ splits over $A$ and pick witnesses such that $\phi (\bx, \bb)$, $\neg \phi (\bx, \bb') \in p$ and $\tp(\bb/A') = \tp(\bb'/A')$. Note that $p \rest N_i = \tp (\ba_i / N_i)$ by convergence and Lemma \ref{coherent-avg}. Since $N_i$ is $\chi^+$-saturated, we can find $\bb'' \in \fct{<\kappa}{|N_i|}$ such that $\tp (\bb'' / A') = \tp (\bb / A')$. Now either $\phi (\bx, \bb'') \in p$ or $\neg \phi (\bx, \bb'') \in p$. If $\phi (\bx, \bb'') \in p$, then $\phi (\bx, \bb''), \neg \phi (\bx, \bb')$ witness that $p$ splits over $A$ and if $\neg \phi (\bx, \bb'') \in q$, then $\phi (\bx, \bb)$, $\neg \phi (\bx, \bb'')$ witness the splitting. Either way, we can replace $\bb$ or $\bb'$ by $\bb''$. So (swapping the role of $\bb$ and $\bb'$ if necessary), assume without loss of generality that $\bb'' = \bb$ (so $\bb \in \fct{<\kappa}{|N_i|}$).

  By definition of a Morley sequence, $p \rest N_i$ does not split over $A$, so $\bb' \notin \fct{<\kappa}{|N_i|}$. Let $p_i' := p \rest N_i \cup \{\phi (\bx, \bb), \phi (\bx, \bb')\}$. We claim that $p_i'$ does not split over $A$: if it does, since $\phi (\bx, \bb')$ is the only formula of $p_i'$ with parameters outside of $N_i$, the splitting must be witnessed by $\phi (\bx, \bc)$, $\neg \phi (\bx, \bc')$, and one of them must be outside $N_i$, so $\bc = \bb'$. Now $\tp (\bb / A) = \tp (\bb' / A) = \tp (\bc' / A)$, and we have $\bc' \in \fct{<\kappa}{|N_i|}$ so by nonsplitting of $p \rest N_i$, also $\neg \phi (\bx, \bb) \in p \rest N_i$. This is a contradiction since we know $\phi (\bx, \bb) \in p \rest N_i$.

  Now, since $\BI$ is $\chi$-based on $A'$, $p$ does not split over $A'$ and by monotonicity $p_i'$ also does not split over $A'$. Now use the proof of Lemma \ref{ns-uq} (with $M = N_i$) to get a contradiction.
\end{proof}

We are now ready to prove the relationship between Morley and convergent:

\begin{thm}\label{coherence-convergence}
  Let $\chi_0 \ge (|L| + 2)^{<\kappa}$ be such that $\mathcal{N}$ does not have the order property of length $\chi_0^+$. Let $\chi := \left(2^{2^{\chi_0}}\right)^+$.

  If $\BI$ is a $(\chi_0^+, \chi_0^+, \chi)$-Morley sequence, then $\BI$ is $\chi$-convergent.
\end{thm}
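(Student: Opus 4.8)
The plan is to show that for every set $B$, the average $\Av_\chi(\BI/B)$ is a complete type over $B$. Since at most one of $\phi$, $\neg\phi$ lies in the average (as noted right after the definition of $\Av$), it suffices to rule out that \emph{both} $\phi$ and $\neg\phi$ have at least $\chi$ realizations among the entries of $\BI$. So fix $B$ and a formula $\phi$ over $B$, and suppose toward a contradiction that $\BI^+ := \{\ba \in \BI \mid {}\models \phi[\ba]\}$ and $\BI^- := \{\ba \in \BI \mid {}\models \neg\phi[\ba]\}$ both have size $\ge \chi$.

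First I would extract convergent subsequences. Writing $\mu := 2^{2^{\chi_0}}$, one checks that $\mu^{\chi_0} = 2^{2^{\chi_0}\cdot \chi_0} = 2^{2^{\chi_0}} = \mu$, so $\mu = \mu^{\chi_0} + 2^{2^{\chi_0}}$ and $\chi = \mu^+$; thus Fact \ref{conv-thm} (whose order-property hypothesis is exactly ours) applies to any subsequence of size $\chi$. Applying it to $\BI^+$ and to $\BI^-$ yields $\chi_0$-convergent $\BJ^+ \subseteq \BI^+$ and $\BJ^- \subseteq \BI^-$, each of size $\chi$; by Remark \ref{avg-monot} each is in fact $\chi$-convergent, with its $\chi$- and $\chi_0$-averages coinciding. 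Each of $\BJ^+, \BJ^-$ is a subsequence of the Morley sequence $\BI$ of size $\ge \chi_0^+$, hence $(\chi_0^+, \chi_0^+, \chi_0^+)$-Morley over $A$ by Remark \ref{coherent-monot}, so Lemma \ref{morley-based} (with its parameter $\chi$ set to $\chi_0$) shows both are $\chi_0$-based on $A$. In particular, $\Av_\chi(\BJ^+ / B')$ and $\Av_\chi(\BJ^- / B')$ do not split over $A$ for every $B'$.

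Now I would derive the contradiction from uniqueness of nonsplitting extensions. Note $\phi \in \Av_\chi(\BJ^+/B)$ and $\neg\phi \in \Av_\chi(\BJ^-/B)$, so these two complete types over $B$ (both nonsplitting over $A$) disagree; it is enough to show they agree on a suitable witness model. I would pick $i_* \in I$ such that both $\BJ^+$ and $\BJ^-$ have at least $\chi$ entries at indices $\ge i_*$. Then Lemma \ref{coherent-avg}, applied to each subsequence, gives $\Av_\chi(\BJ^+ / N_{i_*}) \subseteq \tp(\ba_{i_*} / N_{i_*})$ and likewise for $\BJ^-$; since both restricted averages are complete over $N_{i_*}$ by convergence, they must both equal $\tp(\ba_{i_*}/N_{i_*})$, hence agree on $N_{i_*}$. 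Replacing $B$ by $B \cup |N_{i_*}|$ (averages restrict correctly, and $A \subseteq |N_{i_*}|$ since $A$ sits inside every witness), the two averages are complete over $B \cup |N_{i_*}|$, do not split over $A$, and agree on $N_{i_*}$, which is $|A|^+$-saturated because $|A| < \chi_0$ and $N_{i_*}$ is $\chi_0^+$-saturated. By Lemma \ref{ns-uq} they are equal, and restricting back to $B$ contradicts their disagreement on $\phi$.

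The main obstacle is the transfer in the last paragraph: lacking compactness, I cannot see directly that the average of the whole sequence is determined, so everything is routed through a single witness model $N_{i_*}$ and nonsplitting uniqueness, and the existence of a usable $i_*$ is delicate. The set of valid $i_*$ for $\BJ^+$, namely $\{i \in I \mid |[i,\infty)_I \cap \BJ^+| \ge \chi\}$, is an initial segment of $I$, and similarly for $\BJ^-$; since initial segments of a linear order are nested, it suffices that each is nonempty. This is immediate when $I$ is well-ordered (take $i_* = \min I$), which is the case in the applications via Lemma \ref{coherent-existence}, but for a wildly ordered index set one must argue nonemptiness more carefully (e.g.\ using Lemma \ref{lo-cut} after first arranging the subsequences to carry a well-behaved indexing). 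Pinning down this point, rather than the independence-calculus manipulations, is where I expect the real work to lie.
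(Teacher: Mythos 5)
Your overall strategy is the paper's own: extract $\chi_0$-convergent subsequences $\BJ^+,\BJ^-$ of size $\chi$ via Fact \ref{conv-thm}, note they are $(\chi_0^+,\chi_0^+,\chi_0^+)$-Morley over $A$ by Remark \ref{coherent-monot} and hence $\chi_0$-based on $A$ by Lemma \ref{morley-based}, match the two averages on a single $\chi_0^+$-saturated witness model, and finish with nonsplitting uniqueness (Lemma \ref{ns-uq}). The gap is exactly where you place it, and it is genuine: your requirement on $i_*$ --- a single index with at least $\chi$ entries of \emph{each} of $\BJ^+$ and $\BJ^-$ above it --- can simply fail. The theorem allows $I$ to be an arbitrary linear order; if $I$ has order type $\chi^*$ (the reverse of $\chi$), then $[i,\infty)_I$ has size less than $\chi$ for \emph{every} $i \in I$, so both of your initial segments are empty. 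Your proposed repair (re-indexing the subsequences by a well-behaved order) does not work in general either: a subsequence must carry the order inherited from $I$, since the witnesses and the coherence condition (\ref{coherence-cond}) of Definition \ref{coherent-def} are tied to that order, and a linear order of size $\chi$ need not contain any well-ordered suborder of size $\chi$.

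The fix is a weakening you already have in hand but do not exploit at the crucial moment: since $\BJ^{\pm}$ is convergent, its $\chi$-average equals its $\chi_0$-average (Remark \ref{avg-monot}), so the Lemma \ref{coherent-avg} argument only needs $\chi_0$-many (not $\chi$-many) entries of the relevant subsequence above the chosen index. Apply Lemma \ref{lo-cut} \emph{separately} to the index sets of $\BJ^+$ and $\BJ^-$ (each of size $\chi > \chi_0$) to get $i_0$ in the first and $i_1$ in the second with at least $\chi_0$-many successors inside the respective subsequence; this also repairs the minor point that Lemma \ref{coherent-avg} requires the index to belong to the index set of the sequence it is applied to (your $i_*$ cannot lie in both, as the two subsequences are disjoint). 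Now set $i_* := \min(i_0,i_1)$. Lemma \ref{coherent-avg} plus completeness give $\Av (\BJ^+ / N_{i_0}) = \tp (\ba_{i_0}/N_{i_0})$ and $\Av (\BJ^- / N_{i_1}) = \tp (\ba_{i_1}/N_{i_1})$; restricting both to $N_{i_*}$ and using the coherence condition (\ref{coherence-cond}) of the \emph{full} sequence $\BI$, both averages restrict to $\tp (\ba_{i_*}/N_{i_*})$ on $N_{i_*}$, and your uniqueness step then goes through verbatim. With this modification your argument becomes precisely the paper's proof.
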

\begin{proof}
  Write $\BI = \seq{\ba_i : i \in I}$ and let $\seq{N_i : i \in I}$ witness that it is Morley for $p$ over $A$.

Assume for a contradiction that $\BI$ is \emph{not} $\chi$-convergent. Then there exists a formula $\phi (\bx)$ (over $\mathcal{N}$) and linear orders $I_\ell \subseteq I$, $\ell = 0, 1$ such that $|I_\ell| = \chi$ and $i \in I_\ell$ implies\footnote{Where $\phi^0$ stands for $\phi$, $\phi^1$ for $\neg \phi$.} $\models \phi^\ell[\ba_i]$. By Fact \ref{conv-thm}, we can assume without loss of generality that $\BI_\ell := \seq{\ba_i : i \in I_\ell}$ is $\chi_0$-convergent. By Lemma \ref{morley-based} (with $\chi_0$ here standing for $\chi$ there), $\BI_\ell$ is $\chi_0$-based on $A$ for $\ell = 0,1$. Let $p_\ell := \Av_{\chi_0} (\BI_\ell / \mathcal{N})$. Since $\BI_\ell$ is $\chi_0$-based on $A$, $p_\ell$ does not split over $A$. By Lemma \ref{lo-cut}, pick $i_\ell \in I$ so that $|(i, \infty)_{I_\ell}| \ge \chi_0$ for $\ell = 0, 1$. let $i := \min (i_0, i_1)$. By Lemma \ref{coherent-avg} and convergence, $p_\ell \rest N_{i_\ell} = \tp (a_{i_\ell} / N_{i_\ell})$ so $p_\ell \rest N_i = \tp (a_{i_{\ell}} / N_i) = \tp (a_i / N_i)$, so $p_0 \rest N_i = p_1 \rest N_i$. By assumption, $N_i$ is $\chi_0^+$-saturated. By uniqueness for nonsplitting (Lemma \ref{ns-uq}), $p_0 = p_1$. However $\phi (\bx) \in p_0$ while $\neg \phi (\bx) \in p_1$, contradiction.
\end{proof}

From now on we assume:

\begin{hypothesis}\label{midway-hyp} \
  \begin{enumerate}
    \item $\chi_0 \ge (|L| + 2)^{<\kappa}$ is an infinite cardinal.
    \item $\mathcal{N}$ does not have the order property of length $\chi_0^+$.
    \item $\chi := \left(2^{2^{\chi_0}}\right)^{+}$.
    \item The default parameters for Morley sequences are $(\chi_0^+, \chi^+, \chi^+)$, and the default parameter for  averages and convergence is $\chi$. That is, Morley means $(\chi_0^+, \chi^+, \chi^+)$-Morley, convergent means $\chi$-convergent, $\Av (\BI / A)$ means $\Av_{\chi} (\BI / A)$, and based means $\chi$-based.
  \end{enumerate}
\end{hypothesis}

Note that Theorem \ref{coherence-convergence} and Hypothesis \ref{midway-hyp} imply that any Morley sequence is convergent. Moreover by Lemma \ref{morley-based}, any Morley sequence over $A$ is based on $A$. We will use this freely.

Before studying chains of saturated models, we generalize Lemma \ref{morley-based} to independence notions that are very close to splitting (the reason has to do with the translation to AECs (Section \ref{mainthm-sec})):

\begin{defin}\label{splitting-like-def}
  A \emph{splitting-like notion} is a binary relation $R (p, A)$, where $p \in \Ss^{<\infty} (B)$ for some set $B$ and $A \subseteq B$, satisfying the following properties:

  \begin{enumerate}
    \item Monotonicity: If $A \subseteq A' \subseteq B_0 \subseteq B$, $p \in \Ss^{<\infty} (B)$, and $R (p, A)$, then $R (p \rest B_0, A')$.
    \item Weak uniqueness: If $A \subseteq |M| \subseteq B$, $M$ is $\left(|A| + (|L| + 2)^{<\kappa}\right)^+$-saturated, and for $\ell = 1,2$, $q_\ell \in \Ss^{<\infty} (B)$, $R (q_\ell, A)$, and $q_1 \rest M = q_2 \rest M$, then $q_1 = q_2$.
    \item $R$ extends nonsplitting: If $p \in \Ss^{<\infty} (B)$ does not split over $A \subseteq B$, then $R (p, A)$.
  \end{enumerate}

  We also say ``$p$ does not $R$-split over $A$'' instead of $R (p, A)$.
\end{defin}
\begin{remark}
  If $R (p, A)$ holds if and only if $p$ does not split over $A$, then $R$ is a splitting-like notion: monotonicity is easy to check and $R$ is nonsplitting. Weak uniqueness is Lemma \ref{ns-uq}.
\end{remark}

\begin{lem}\label{av-eq-monot}
  Let $R$ be a splitting-like notion. Let $p \in \Ss^{<\kappa} (B)$ be such that $p$ does not $R$-split over $A \subseteq B$ with $|A| \le \chi_0$.

  Let $\BI := \seq{\ba_i : i \in I} \smallfrown \seq{N_i : i \in I}$ be Morley for $p$ over $A$.

  If $|\bigcup_{i \in I} N_i| \subseteq B$, then $\Av (\BI / B) = p$.
\end{lem}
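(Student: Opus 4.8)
The plan is to apply the \emph{weak uniqueness} clause of the splitting-like notion $R$ to the two complete types $p$ and $q := \Av (\BI / B)$ over $B$, comparing them over one of the witness models $N_i$, which is saturated enough to force agreement. The conceptual content is exactly that of Lemma \ref{ns-uq}: two $R$-nonsplitting types that agree on a sufficiently saturated submodel are equal.

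First I would record the two global facts about $\BI$. Since $\BI$ is a (default) Morley sequence, Theorem \ref{coherence-convergence} together with Hypothesis \ref{midway-hyp} gives that $\BI$ is $\chi$-convergent, so $q = \Av (\BI / B)$ is a complete type over $B$ of the same length as $p$. Likewise, by Lemma \ref{morley-based} any Morley sequence over $A$ is based on $A$, so $q$ does not split over $A$; as $R$ extends nonsplitting (third clause of Definition \ref{splitting-like-def}), $q$ does not $R$-split over $A$. On the other side, $p$ does not $R$-split over $A$ by hypothesis.

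Next I would pin down the comparison model. Since $|I| \ge \chi^+ > \chi$, Lemma \ref{lo-cut} furnishes $i \in I$ with $|[i, \infty)_I| \ge \chi$. By Remark \ref{coherent-monot}, $\BI$ is also $(\chi_0^+, \chi^+, \chi)$-Morley, so Lemma \ref{coherent-avg} applies and gives $\Av_\chi (\BI / N_i) \subseteq \tp (\ba_i / N_i)$. Because $|N_i| \subseteq \bigcup_{j \in I} |N_j| \subseteq B = \dom{p}$ and $\ba_i$ realizes $p \rest N_i$ (Definition \ref{coherent-def}), we have $\tp (\ba_i / N_i) = p \rest N_i$; and since $\Av_\chi (\BI / N_i)$ is complete by convergence while $p \rest N_i$ is complete as a restriction of the complete type $p$, the inclusion upgrades to equality. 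Finally, membership in an average depends only on counting how many members of $\BI$ satisfy the negation of a formula, so restriction commutes with averaging: $q \rest N_i = \Av_\chi (\BI / N_i)$. Combining, $q \rest N_i = p \rest N_i$.

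It remains to verify that $N_i$ is a legitimate witness for weak uniqueness, which is the only real bookkeeping. We have $A \subseteq |N_i| \subseteq B$, and $N_i$ is $\chi^+$-saturated; since $|A| \le \chi_0$ and $(|L| + 2)^{<\kappa} \le \chi_0$, we get $\left(|A| + (|L| + 2)^{<\kappa}\right)^+ \le \chi_0^+ \le \chi^+$, so $N_i$ meets the saturation demand of Definition \ref{splitting-like-def}. Applying weak uniqueness of $R$ to $p$ and $q$ over $B$ with $M = N_i$ then yields $q = p$, i.e.\ $\Av (\BI / B) = p$. The main (minor) obstacle is reconciling the Morley parameters with the average parameter $\chi$ via Remark \ref{coherent-monot} so that Lemma \ref{coherent-avg} is applicable, and confirming the saturation of $N_i$ suffices; everything else is a direct consequence of convergence, basedness, and the axioms of a splitting-like notion.
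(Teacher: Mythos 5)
Your proof is correct and follows essentially the same route as the paper's: establish that both $p$ and $\Av (\BI / B)$ do not $R$-split over $A$ (via Lemma \ref{morley-based} and the fact that $R$ extends nonsplitting), show they agree on a witness $N_i$ with a long enough tail (Lemma \ref{coherent-avg} plus convergence), and conclude by weak uniqueness with $M = N_i$. The only difference is that you spell out the bookkeeping the paper leaves implicit --- the existence of $i$ via Lemma \ref{lo-cut}, the parameter adjustment via Remark \ref{coherent-monot}, the fact that restriction commutes with averaging, and the saturation check $\left(|A| + (|L|+2)^{<\kappa}\right)^+ \le \chi^+$ --- all of which is accurate.
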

\begin{proof}
  Since $\BI$ is Morley, $\BI$ is convergent. By Lemma \ref{morley-based}, $\BI$ is based on $A$. Thus we have that $\Av (\BI / B)$ does not split over $A$, so it does not $R$-split over $A$. Let $i \in I$ be such that $|(i, \infty)_I| \ge \chi$ (use Lemma \ref{lo-cut}). Then $\Av (\BI / N_i) = \tp (\ba_i / N_i) = p \rest N_i$ by Lemma \ref{coherent-avg}. By the weak uniqueness axiom of splitting-like relations (with $N_i$ here standing for $M$ there), $\Av (\BI / B) = p$.
\end{proof}

To construct Morley sequences, we will also use:

\begin{fact}\label{more-shelah-facts} \
  \begin{enumerate}
    \item\label{stab-1} If $\mu = \mu^{\chi_0} + 2^{2^{\chi_0}}$, then $\mathcal{N}$ is $(<\kappa)$-stable in $\mu$.
    \item\label{ns-2} Let $M$ be $\chi_0^+$-saturated. Then for any $p \in \Ss^{<\kappa} (M)$, there exists $A \subseteq |M|$ of size at most $\chi_0$ such that $p$ does not split over $A$.
  \end{enumerate}
\end{fact}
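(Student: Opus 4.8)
The statement bundles two essentially independent facts, and I would prove them separately, both by exploiting the failure of the order property of length $\chi_0^+$ granted by Hypothesis \ref{midway-hyp}.

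For (\ref{stab-1}), the plan is to count types using convergent sequences. Fix $\alpha < \kappa$ and a set $A$ with $|A| \le \mu$, and suppose toward a contradiction that $|\Ss^\alpha (A)| \ge \mu^+$. Choosing one realization of each of $\mu^+$ distinct types yields a sequence $\BI$ of size $\mu^+$ (distinct types give distinct realizations). Since $\mu = \mu^{\chi_0} + 2^{2^{\chi_0}}$ and $\mathcal{N}$ has no order property of length $\chi_0^+$, Fact \ref{conv-thm} extracts a $\chi_0$-convergent $\BJ \subseteq \BI$ of size $\mu^+$. Then $q := \Av_{\chi_0} (\BJ / A)$ is a \emph{complete} type over $A$, and the number of members of $\BJ$ failing some formula of $q$ is at most $|q| \cdot \chi_0$, since each $\phi \in q$ is violated by fewer than $\chi_0$ elements of $\BJ$. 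The point is that $|q| \le (|A| + |L| + 2)^{<\kappa} \le \mu^{<\kappa} = \mu$: here $\chi_0 \ge (|L|+2)^{<\kappa} \ge \theta$ for every $\theta < \kappa$, so $\mu = \mu^{\chi_0}$ forces $\mu^{<\kappa} = \mu$. Hence all but $\le \mu$ of the $\mu^+$ members of $\BJ$ realize the single type $q$, contradicting that they realize pairwise distinct types. Summing the resulting bound $|\Ss^\alpha (A)| \le \mu$ over $\alpha < \kappa$ (and using $\kappa \le \chi_0 \le \mu$) gives $(<\kappa)$-stability in $\mu$.

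For (\ref{ns-2}), the plan is to turn unbounded splitting into an order property of length $\chi_0^+$. Suppose $p \in \Ss^\alpha (M)$ splits over \emph{every} $A \subseteq |M|$ with $|A| \le \chi_0$, and fix $\ba^\ast \models p$. I would build by induction on $i < \chi_0^+$ tuples $\bb_i := \ba_i {}^\frown \bd_i {}^\frown \bd_i'$ in $M$: setting $B_i := \bigcup_{j < i} \bb_j$ (so $|B_i| \le \chi_0$), splitting of $p$ over $B_i$ gives a formula $\psi_i$ and $\bd_i, \bd_i' \in |M|$ with $\tp (\bd_i / B_i) = \tp (\bd_i' / B_i)$, $\psi_i (\bx, \bd_i) \in p$, and $\neg \psi_i (\bx, \bd_i') \in p$; and $\chi_0^+$-saturation of $M$ produces $\ba_i \in |M|$ realizing the size-$\le \chi_0$ set $p \rest B_i \cup \{\psi_i (\bx, \bd_i), \neg \psi_i (\bx, \bd_i')\}$, which is consistent because $\ba^\ast$ realizes it. As there are at most $(|L| + 2)^{<\kappa} \le \chi_0$ formulas, a pigeonhole over the $\chi_0^+$ stages lets me assume $\psi_i = \psi$ for all $i$. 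Now let $\Theta (\bx, \by)$ be the formula asserting, across the two tuples, that the $\ba$-coordinate of the \emph{second} argument separates via $\psi$ the $(\bd, \bd')$-coordinates of the \emph{first}, so that $\Theta [\bb_i, \bb_j]$ reads $\models \psi[\ba_j, \bd_i] \wedge \neg \psi[\ba_j, \bd_i']$. For $i \le j$ this holds, since $\ba_j \models p \rest B_j$ while $\bd_i, \bd_i' \in B_j$ (and at the diagonal it holds by the choice of $\ba_i$); for $j < i$ it fails, since $\ba_j \in B_i$ and $\tp (\bd_i / B_i) = \tp (\bd_i' / B_i)$ force $\models \psi[\ba_j, \bd_i] \leftrightarrow \psi[\ba_j, \bd_i']$. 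Thus $\Theta [\bb_i, \bb_j]$ holds iff $i \le j$, and $\Theta (\bx, \by) \wedge \neg \Theta (\by, \bx)$ then witnesses the order property of length $\chi_0^+$ on $\seq{\bb_i : i < \chi_0^+}$ --- the desired contradiction.

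The delicate point, and the step I expect to require the most care, is the second construction: one must arrange the single formula $\psi$ (via the pigeonhole) and the bookkeeping $|B_i| \le \chi_0$ so that \emph{both} opposite sides of the order are forced --- one side from the membership $\psi (\bx, \bd_i) \in p$ together with $\ba_j \models p \rest B_j$, and the other from the equality $\tp (\bd_i / B_i) = \tp (\bd_i' / B_i)$. Obtaining a clean half-graph, rather than a pattern controlled on only one side of the diagonal, is exactly what makes carrying both $\bd_i$ and $\bd_i'$ necessary and is where the argument would most easily go wrong. In part (\ref{stab-1}), by contrast, the only subtlety is the cardinal-arithmetic identity $\mu^{<\kappa} = \mu$, which follows routinely from $\mu = \mu^{\chi_0}$ and $\chi_0 \ge (|L| + 2)^{<\kappa}$.
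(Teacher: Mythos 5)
Your proposal is correct, but note that the paper does not actually prove this statement: it is labelled a \emph{Fact} and discharged by citation, part (\ref{stab-1}) to \cite[Theorem V.A.1.19]{shelahaecbook2} and part (\ref{ns-2}) to \cite[V.A.1.12]{shelahaecbook2} (together with the observation that $\chi_0^+$-saturation of $M$ yields the hypothesis required there). So you have reconstructed arguments where the paper only quotes, and both of your arguments are sound. For part (\ref{ns-2}), your construction --- a splitting chain of length $\chi_0^+$, the pigeonhole on the at most $(|L|+2)^{<\kappa} \le \chi_0$ parameter-free formulas, the use of the fixed realization $\ba^\ast$ to keep the partial types consistent, and the passage from the half-graph $\Theta$ to the strict order via $\Theta (\bx, \by) \wedge \neg \Theta (\by, \bx)$ --- is essentially the classical first-order argument and is the same idea underlying Shelah's V.A.1.12; the bookkeeping ($|B_i| \le \chi_0$, using $\kappa \le 2^{<\kappa} \le \chi_0$) all checks out. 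For part (\ref{stab-1}), your route differs from the literature: you derive stability from the convergent set existence theorem (Fact \ref{conv-thm}) by counting (at most $|q| \cdot \chi_0 \le \mu$ members of $\BJ$ can omit a formula of the complete average $q$, so $\mu^+$ of them realize a single type over $A$), where the key point $\mu^{<\kappa} = \mu$ follows correctly from $\kappa \le \chi_0$ and $\mu = \mu^{\chi_0}$. This deduction is valid \emph{inside this paper's framework}, since Fact \ref{conv-thm} is quoted as a black box whose hypotheses are only the cardinal arithmetic on $\mu$ and the failure of the order property of length $\chi_0^+$ (Hypothesis \ref{midway-hyp}). The one caveat worth flagging: in \cite{shelahaecbook2} the convergent set existence theorem V.A.2.8 comes after, and its proof relies on, the stability theorem V.A.1.19 that you derive from it, so as a reconstruction of the literature your part (\ref{stab-1}) inverts the logical order and would be circular; as a proof relative to the facts this paper takes as given, however, it is a legitimate and pleasantly short argument, and it makes visible exactly where each hypothesis is used.
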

\begin{proof}
  The first result is \cite[V.A.1.19]{shelahaecbook2}. The second follows from \cite[V.A.1.12]{shelahaecbook2}: one only has to observe that the condition between $M$ and $\mathcal{N}$ there holds when $M$ is $\chi_0^+$-saturated.
\end{proof}

We can now get a (completely local) result on unions of saturated models.

\begin{thm} \label{union-sat-av}
  Assume:

  \begin{enumerate}
    \item $\lambda > \chi^+$ is such that $\mathcal{N}$ is stable in $\mu$ for unboundedly many $\mu < \lambda$.
    \item $\seq{M_i : i < \delta}$ is increasing and for all $i < \delta$, $M_i$ is $\lambda$-saturated. Write $M_\delta := \bigcup_{i < \delta} M_i$.
    \item\label{lc-hyp} For any $q \in \Ss (M_\delta)$, there exists a splitting-like notion $R$, $i < \delta$ and $A \subseteq |M_i|$ of size at most $\chi_0$ such that $q$ does not $R$-split over $A$.
  \end{enumerate}

  Then $M_\delta$ is $\lambda$-saturated.
\end{thm}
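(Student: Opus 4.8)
The plan is to reduce to types of length one and then realize each such type as an element of a suitable Morley sequence living inside one of the $M_i$, whose average we can identify with the given type. First I would invoke Hypothesis \ref{sat-hyp}: since $\lambda > \chi^+ > (|L|+2)^{<\kappa}$ and $M_\delta \in \mathcal{S}$ (as $\mathcal{S}$ is closed under chains), it suffices to show that $M_\delta$ is $\lambda$-saturated for types of length one. So I fix $B \subseteq |M_\delta|$ with $|B| < \lambda$ and $q_0 \in \Ss^1 (B)$, extend $q_0$ to a complete $q \in \Ss^1 (M_\delta)$, and aim to realize $q_0$ inside $M_\delta$.

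Next I would apply hypothesis (\ref{lc-hyp}) to $q$, obtaining a splitting-like notion $R$, an index $i_0 < \delta$, and $A_0 \subseteq |M_{i_0}|$ of size at most $\chi_0$ such that $q$ does not $R$-split over $A_0$. Since $M_{i_0}$ is $\chi_0^+$-saturated (as $\lambda > \chi_0^+$), Fact \ref{more-shelah-facts}.(\ref{ns-2}) yields $A_1 \subseteq |M_{i_0}|$ of size at most $\chi_0$ over which $p := q \rest M_{i_0}$ does not (ordinarily) split. Setting $A := A_0 \cup A_1$, monotonicity of $R$ and of nonsplitting give that $q$ does not $R$-split over $A$ and $p$ does not split over $A$, with $A \subseteq |M_{i_0}|$ of size at most $\chi_0$. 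If $p$ happens to be algebraic, I would dispose of this case directly: a minimal algebraic formula $\psi (x, \bc) \in p$ has $\bc \in |M_{i_0}|$ and fewer than $\kappa$ realizations, all of which realize $p$; the type asserting the existence of that many distinct solutions of $\psi (x, \bc)$ has length less than $\kappa$ and is realized in $M_{i_0}$ by $(<\kappa)$-saturation, so all realizations of $p$ — in particular the one realizing $q \supseteq q_0$ — lie in $M_{i_0} \subseteq M_\delta$.

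The heart of the argument is the nonalgebraic case. Using hypothesis (1), I would choose $\mu < \lambda$ with $\chi < \mu$, $|B| \le \mu$, and $\mathcal{N}$ stable in $\mu$. By Lemma \ref{coherent-existence} (with the global $\chi$ playing the role of the parameter $\chi$ there, legitimate since $|A| \le \chi_0 \le \chi$), I build inside the $\mu^+$-saturated model $M_{i_0}$ a $(\chi^+, \chi^+, \mu^+)$-Morley sequence $\BI = \seq{\ba_i : i < \mu^+}$ for $p$ over $A$; because $|A| \le \chi_0$ and $\mu^+ \ge \chi^+$, this is a default Morley sequence, hence convergent by Theorem \ref{coherence-convergence} and based on $A$ by Lemma \ref{morley-based}. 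I would then identify its average with $q$: Lemma \ref{av-eq-monot} gives $\Av (\BI / M_{i_0}) = p = q \rest M_{i_0}$, while $\Av (\BI / M_\delta)$ and $q$ are both complete types over $M_\delta$ that do not $R$-split over $A$ and agree on the sufficiently saturated model $M_{i_0}$ (with $A \subseteq |M_{i_0}|$); the weak uniqueness axiom for $R$ then forces $\Av (\BI / M_\delta) = q$, whence $\Av (\BI / B) = q \rest B = q_0$. Finally, since $|\BI| = \mu^+ > \mu \ge |\Ss^1 (B)|$ (the last inequality by stability in $\mu$ and $|B| \le \mu$), Lemma \ref{average-realize} produces $\ba_i \in \BI \subseteq |M_{i_0}| \subseteq |M_\delta|$ realizing $q_0$, as desired.

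I expect the main obstacle to be the last two steps of the nonalgebraic case. The first is pinning down $\Av (\BI / M_\delta) = q$ through the interplay of convergence, basedness, and weak uniqueness of the splitting-like notion, routed through the single saturated model $M_{i_0}$ because $B$ need not contain any $M_i$. The second, and genuinely delicate, point is ensuring the Morley sequence is long enough to realize its average over the potentially large set $B$: this is precisely where the hypothesis that $\mathcal{N}$ is stable in unboundedly many $\mu < \lambda$ is essential, as it allows taking $|B| \le \mu < \lambda$ and bounding $|\Ss^1 (B)| \le \mu < |\BI|$.
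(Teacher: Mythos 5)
Your overall strategy matches the paper's: reduce to length-one types via Hypothesis \ref{sat-hyp}, extend $q_0$ to $q \in \Ss (M_\delta)$, combine hypothesis (\ref{lc-hyp}) with Fact \ref{more-shelah-facts} to get a single base $A \subseteq |M_{i_0}|$ of size at most $\chi_0$ over which $q$ does not $R$-split and $q \rest M_{i_0}$ does not split, build a Morley sequence $\BI$ of length $\mu^+$ for $q \rest M_{i_0}$ over $A$ inside $M_{i_0}$ via Lemma \ref{coherent-existence} (choosing $\mu < \lambda$ with $\mu \ge \chi^+ + |B|$ and stability in $\mu$, exactly where hypothesis (1) enters), identify $\Av (\BI / M_\delta)$ with $q$, and realize $\Av (\BI / B) = q_0$ via Lemma \ref{average-realize}. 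Your identification step routes through $M_{i_0}$ (Lemma \ref{av-eq-monot} over $M_{i_0}$, then weak uniqueness of $R$ to lift the agreement to $M_\delta$); the paper applies Lemma \ref{av-eq-monot} directly with $B = M_\delta$, noting that $\BI$ is also Morley for $q$ over $A$ --- these are the same argument, since your detour just re-runs the proof of that lemma. Your attention to the Morley parameters (using the global $\chi$ in Lemma \ref{coherent-existence} and then using $|A| \le \chi_0$ to see the sequence is default-Morley) is correct and in fact more careful than the paper's wording.

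The genuine problem is your treatment of the algebraic case. You take ``algebraic'' to mean ``contains a formula $\psi (x, \bc)$ with fewer than $\kappa$ realizations'' and run the first-order minimal-algebraic-formula argument. This breaks when the minimal number $\theta$ of realizations is infinite (possible here, since $\kappa$ may be uncountable and formulas are quantifier-free $L_{\kappa,\kappa}$): minimality of cardinality no longer implies that every realization of $\psi$ realizes $p$ (a formula $\psi \wedge \phi \in p$ can properly cut down the solution set while keeping the same infinite cardinality), and realizing in $M_{i_0}$ a type asserting $\theta$ distinct solutions of $\psi$ does not account for \emph{all} solutions when $\theta$ is infinite. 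Consequently, a type $p = q \rest M_{i_0}$ that contains such a formula but is not realized in $M_{i_0}$ lands in your ``algebraic'' case, where your argument fails, and it is not covered by your nonalgebraic case either. The repair is what the paper does: the relevant notion of algebraic is ``realized in the domain of the type,'' which is exactly what Lemma \ref{coherent-existence} needs (a realization $a$ of $p$ lying in the domain forces $x = a \in p$ by completeness). With that notion the case split is trivial and done at the level of $q$: if $q$ is realized in $M_\delta$ we are done immediately; otherwise $q \rest M_{i_0}$ cannot be realized in $M_{i_0}$ (else $x = a \in q \rest M_{i_0} \subseteq q$ would realize $q$ inside $M_\delta$), so your main argument applies to every remaining type.
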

\begin{proof}
  By Hypothesis \ref{sat-hyp}, it is enough to check that $M_\delta$ is $\lambda$-saturated for types of length one. Let $p \in \Ss (B)$, $B \subseteq |M_\delta|$ have size less than $\lambda$. Let $q$ be an extension of $p$ to $\Ss (M_\delta)$. If $q$ is algebraic, then $p$ is realized inside $M_\delta$ so assume without loss of generality that $q$ is not algebraic. By assumption, there exists a splitting-like notion $R$, $i < \delta$ and $A \subseteq |M_i|$ such that $p$ does not $R$-split over $A$ and $|A| \le \chi_0$. Without loss of generality, $i = 0$. Now $M_0$ is $\chi_0^+$-saturated so (by Fact \ref{more-shelah-facts}) there exists $A' \subseteq |M_0|$ of size at most $\chi_0$ such that $q \rest M_0$ does not split over $A'$. By making $A$ larger if necessary, we can assume $A = A'$.

  Pick $\mu < \lambda$ such that $\mu \ge \chi^+ + |B|$ and $\mathcal{N}$ is stable in $\mu$. Such a $\mu$ exists by the hypothesis on $\lambda$. By Lemma \ref{coherent-existence}, there exists a sequence $\BI$ of length $\mu^+$ which is Morley for $q \rest M_0$ over $A$, with the witnesses living inside $M_0$. Thus $\BI$ is also Morley for $q$ over $A$.

  By Lemma \ref{av-eq-monot}, $\Av (\BI / M_\delta) = q$, and so in particular $\Av (\BI / B) = q \rest B = p$. By Lemma \ref{average-realize}, $p$ is realized by an element of $\BI \subseteq |M_0| \subseteq |M_\delta|$, as needed.
\end{proof}

The condition (\ref{lc-hyp}) in Theorem \ref{union-sat-av} is useful in case we know that the local character cardinal for \emph{chains} $\clc{\alpha}$ is significantly lower than the local character cardinal for \emph{sets} $\slc{\alpha}$. This is the case when a superstability-like condition holds. If we do not care about the local character cardinal for chains, we can state a version of Theorem \ref{union-sat-av} without condition (\ref{lc-hyp}).

\begin{cor}
  Assume:

  \begin{enumerate}
    \item\label{cor-stab-cond} $\lambda > \chi^+$ is such that $\mu^{\chi_0} < \lambda$ for all $\mu < \lambda$.
    \item $\seq{M_i : i < \delta}$ is increasing and for all $i < \delta$, $M_i$ is $\lambda$-saturated.
  \end{enumerate}

  If $\cf{\delta} \ge \chi_0^+$, then $\bigcup_{i < \delta} M_i$ is $\lambda$-saturated.
\end{cor}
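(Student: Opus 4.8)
The plan is to verify the three hypotheses of Theorem \ref{union-sat-av} and then invoke it directly; hypothesis (2) of the corollary is literally hypothesis (2) there, so only condition (1) and condition (\ref{lc-hyp}) require work. For condition (1), I would use the cardinal arithmetic assumption together with Fact \ref{more-shelah-facts}.(\ref{stab-1}). Given any $\mu_0 < \lambda$, set $\mu := (\mu_0 + 2^{2^{\chi_0}})^{\chi_0}$. Since $2^{2^{\chi_0}} < \chi < \lambda$, the base $\mu_0 + 2^{2^{\chi_0}}$ is below $\lambda$, so by hypothesis $\mu < \lambda$. Moreover $\mu^{\chi_0} = \mu$ (as $\chi_0 \cdot \chi_0 = \chi_0$) and $\mu \ge 2^{2^{\chi_0}}$, hence $\mu = \mu^{\chi_0} + 2^{2^{\chi_0}}$, so by Fact \ref{more-shelah-facts}.(\ref{stab-1}) $\mathcal{N}$ is $(<\kappa)$-stable, hence stable, in $\mu$. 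Since $\mu \ge \mu_0$ and $\mu_0 < \lambda$ was arbitrary, $\mathcal{N}$ is stable in unboundedly many cardinals below $\lambda$, which is exactly condition (1).

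The heart of the argument is establishing condition (\ref{lc-hyp}), which I would do by taking $R$ to be nonsplitting itself (a splitting-like notion by the remark following Definition \ref{splitting-like-def}). First I would observe that $M_\delta := \bigcup_{i < \delta} M_i$ is $\chi_0^+$-saturated: by Hypothesis \ref{sat-hyp} (applicable since $\chi_0^+ > (|L| + 2)^{<\kappa}$) it suffices to check saturation for types of length one, and any $A \subseteq |M_\delta|$ of size $\le \chi_0$ is contained in some $|M_i|$ because $|A| \le \chi_0 < \chi_0^+ \le \cf{\delta}$ forces the indices witnessing membership to be bounded below $\delta$; since $M_i$ is $\lambda$-saturated and $|A| < \lambda$, every length-one type over $A$ is realized in $M_i \subseteq M_\delta$. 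With $M_\delta$ now $\chi_0^+$-saturated, Fact \ref{more-shelah-facts}.(\ref{ns-2}) yields, for each $q \in \Ss (M_\delta)$, a set $A \subseteq |M_\delta|$ of size at most $\chi_0$ over which $q$ does not split. Finally, since $|A| \le \chi_0 < \cf{\delta}$, this $A$ again lies in some $|M_i|$, so $q$ does not $R$-split over $A \subseteq |M_i|$; this is precisely condition (\ref{lc-hyp}).

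Having verified all three hypotheses, Theorem \ref{union-sat-av} gives that $M_\delta$ is $\lambda$-saturated, as desired. I expect the only genuinely delicate point to be the observation that $M_\delta$ is itself $\chi_0^+$-saturated: without this one cannot feed $M_\delta$ into Fact \ref{more-shelah-facts}.(\ref{ns-2}) to extract a small nonsplitting base, and it is exactly here that the assumption $\cf{\delta} \ge \chi_0^+$ is used, to swallow every set of size $\le \chi_0$ into one of the $M_i$. By contrast, the cardinal arithmetic bookkeeping for condition (1) and the final appeal to Theorem \ref{union-sat-av} are routine.
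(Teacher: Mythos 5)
Your proposal is correct and follows essentially the same route as the paper's proof: verify stability in unboundedly many $\mu < \lambda$ via Fact \ref{more-shelah-facts}.(\ref{stab-1}) and the cardinal arithmetic hypothesis, use the cofinality assumption to show $M_\delta$ is $\chi_0^+$-saturated, extract a small nonsplitting base via Fact \ref{more-shelah-facts}.(\ref{ns-2}), push it into some $M_i$ by cofinality again, and apply Theorem \ref{union-sat-av} with $R$ being nonsplitting. Your write-up merely fills in details (the explicit choice $\mu := (\mu_0 + 2^{2^{\chi_0}})^{\chi_0}$ and the saturation argument) that the paper leaves as "an easy argument."
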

\begin{proof}
  Fix $\alpha < \kappa$.  By Fact \ref{more-shelah-facts}(\ref{stab-1}), $\mathcal{N}$ is $\alpha$-stable in $\mu$ for any $\mu < \lambda$ with $\mu^{\chi_0} = \mu$ and $\mu \ge \chi$. By hypothesis, there are unboundedly many such $\mu$'s.
  
  Let $M_\delta := \bigcup_{i < \delta} M_i$. By an easy argument using the cofinality condition on $\delta$, $M_\delta$ is $\chi_0^+$-saturated. By Fact \ref{more-shelah-facts}(\ref{ns-2}), for any $p \in \Ss^{<\kappa} (M_\delta)$, there exists $A \subseteq |M_\delta|$ of size $\le \chi_0$ such that $p$ does not split over $A$. By the cofinality assumption on $\delta$, we can find $i < \delta$ such that $A \subseteq |M_i|$. Now apply Theorem \ref{union-sat-av} and get the result.
\end{proof}
\begin{remark}
  The proof shows that we can still replace (\ref{cor-stab-cond}) with ``$\lambda > \chi^+$ is such that $\mathcal{N}$ is stable in $\mu$ for unboundedly many $\mu < \lambda$''.
\end{remark}

We end this section with the following interesting variation: the cardinal arithmetic condition on $\lambda$ is improved, and we do not even need that the $M_i$'s be $\lambda$-saturated, only that they realize enough types from the previous $M_j$'s.

\begin{thm}\label{more-on-chain}
  Assume:

  \begin{enumerate}
    \item $\lambda > \chi$ is such that $\mu^{<\kappa} < \lambda$ for all $\mu < \lambda$ (or such that $\mathcal{N}$ is stable in $\mu$ for unboundedly many $\mu < \lambda$).
    \item\label{q-cond} $M$ is such that for any $q \in \Ss (M)$ there exists $\seq{M_i : i < \delta}$ strictly increasing so that:
      \begin{enumerate}
        \item $\delta \ge \lambda$ is a limit ordinal.
        \item $M = \bigcup_{i < \delta} M_i$
        \item For all $i < \delta$, $M_i$ is $\chi^+$-saturated and $M_{i + 1}$ realizes $q \rest M_i$.
        \item There exists a splitting-like notion $R$, $i < \delta$ and $A \subseteq |M_i|$ of size at most $\chi_0$ such that $q$ does not $R$-split over $A$.
      \end{enumerate}
  \end{enumerate}

  Then $M$ is $\lambda$-saturated.
\end{thm}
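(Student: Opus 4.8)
The plan is to reduce to types of length one and then realize each such type as the average of a convergent sequence extracted directly from the given chain, in the spirit of the proof of Theorem \ref{union-sat-av}. By Hypothesis \ref{sat-hyp} it suffices to show that $M$ is $\lambda$-saturated for types of length one. So fix $B \subseteq |M|$ with $|B| < \lambda$ and $p \in \Ss(B)$, and extend $p$ to some $q \in \Ss(M)$. If $q$ is algebraic then $p$ is already realized in $M$; so assume $q$ is nonalgebraic. Apply hypothesis (2) to $q$ to obtain the chain $\seq{M_i : i < \delta}$ with $\delta \ge \lambda$, the splitting-like notion $R$, the index $i_0 < \delta$, and the set $A \subseteq |M_{i_0}|$ of size $\le \chi_0$ over which $q$ does not $R$-split.

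First I would fix a good nonsplitting base. Since each $M_i$ is $\chi^+$-saturated and $\chi > \chi_0 \ge (|L|+2)^{<\kappa}$, each $M_i$ is in particular $\chi_0^+$-saturated, so Fact \ref{more-shelah-facts}.(\ref{ns-2}) applies to $q \rest M_{i_0}$; enlarging $A$ inside $M_{i_0}$ (which preserves non-$R$-splitting by monotonicity) I may assume in addition that $q \rest M_{i_0}$ does not split over $A$. For each $i$ with $i_0 \le i < \delta$ choose $a_i \in |M_{i+1}|$ realizing $q \rest M_i$; using that $q$ is nonalgebraic (no element of $M$ realizes $q$) these may be taken pairwise distinct, so $\BI := \seq{a_i : i_0 \le i < \delta}$ is a sequence with $|\BI| \ge \lambda$, witnessed by $\seq{M_i : i_0 \le i < \delta}$. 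The coherence clause is automatic: for $i < j$ both $a_i$ and $a_j$ realize $q \rest M_i$, so $\tp(a_i/M_i) = q \rest M_i = \tp(a_j/M_i)$, which is condition (\ref{coherence-cond}) in Definition \ref{coherent-def}.

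The heart of the argument is to check that $\BI$ (after possibly replacing the witnesses $M_i$ by suitable $\chi^+$-saturated submodels) is a Morley sequence for $q$ over $A$. Granting this, Theorem \ref{coherence-convergence} makes $\BI$ convergent, and Lemma \ref{av-eq-monot} applies with $p$ there standing for $q$ and $B$ there for $M$: since $q$ does not $R$-split over $A$ and the witnesses lie inside $M$, it yields $\Av(\BI/M) = q$, hence $\Av(\BI/B) = q \rest B = p$. Finally, the cardinal-arithmetic hypothesis gives $(|B| + |L| + 2)^{<\kappa} < \lambda$ (or, under the alternative hypothesis, stability in some $\mu$ with $|B| \le \mu < \lambda$ bounds $|\Ss(B)| < \lambda$), so $|\BI| \ge \lambda > \chi + \min\!\left((|B|+|L|+2)^{<\kappa},\, |\Ss(B)|\right)$, and Lemma \ref{average-realize} produces $\bar b \in \BI \subseteq |M|$ realizing $p$, as required.

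The step I expect to be the main obstacle is the nonsplitting clause of Definition \ref{coherent-def}, that is, securing a single small base $A$ over which $q \rest M_i$ does not split for every $i$ (equivalently, over which $q$ itself does not split). Non-$R$-splitting (clause (2d)) is in general strictly weaker than nonsplitting — by the third axiom of Definition \ref{splitting-like-def}, nonsplitting implies non-$R$-splitting but not conversely — so (2d) does not by itself deliver this clause. One must instead combine the genuine nonsplitting of $q \rest M_{i_0}$ over $A$ supplied by Fact \ref{more-shelah-facts} with the weak-uniqueness axiom of the splitting-like notion $R$, and propagate nonsplitting up the chain; here the degree of saturation of the union $M$ (hence the cofinality of $\delta$) and the hypothesis that $M_{i+1}$ \emph{realizes} $q \rest M_i$ at each level must be leveraged carefully. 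This is precisely where the present weaker assumption — step-by-step realization rather than full $\lambda$-saturation of the $M_i$ as in Theorem \ref{union-sat-av} — has to do real work, and where the generality of $R$ is essential.
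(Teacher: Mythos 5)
Your overall plan---reduce to length-one types, build a sequence from elements $a_i \in |M_{i+1}|$ realizing $q \rest M_i$, then combine Theorem \ref{coherence-convergence}, Lemma \ref{av-eq-monot} and Lemma \ref{average-realize}---matches the paper's argument, but the step you yourself flag as ``the main obstacle'' (verifying the nonsplitting clause of Definition \ref{coherent-def}) is a genuine gap, and your sketched repair is not the right one. Applying Fact \ref{more-shelah-facts}.(\ref{ns-2}) to $q \rest M_{i_0}$ alone gives nonsplitting of $q \rest M_{i_0}$ over $A$, but this does not transfer to $q \rest M_i$ for $i > i_0$: nonsplitting is preserved when one \emph{restricts} a type, not when one extends it, so you get no control over the larger restrictions, which is exactly what the Morley clause requires. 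Your proposal to ``propagate nonsplitting up the chain'' via the weak-uniqueness axiom of $R$ is never carried out, and it is not how the paper resolves the issue (weak uniqueness of $R$ enters only inside Lemma \ref{av-eq-monot}, to identify the average with $q$). Note also that you cannot instead apply Fact \ref{more-shelah-facts}.(\ref{ns-2}) directly to $q \in \Ss (M)$: the $\chi_0^+$-saturation of $M$ is essentially what the theorem is trying to prove, and $\cf{\delta}$ may be small.

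The paper's fix is a truncation trick that sidesteps propagation entirely. Using hypothesis (1), fix $\mu < \lambda$ with $\mu \ge \chi + |B|$ (either $\mu := \left(\chi + |B|\right)^{<\kappa}$, or a stability cardinal under the alternative hypothesis), and work only with the initial segment $\seq{M_i : i < \mu^+}$ and the corresponding $a_i$. Since $\mu^+$ is regular and greater than $\chi$, the union $\bigcup_{i < \mu^+} M_i$ is $\chi^+$-saturated by a cofinality argument, so Fact \ref{more-shelah-facts}.(\ref{ns-2}) applies to $q \rest \bigcup_{i < \mu^+} M_i$ and yields a \emph{single} set $A'$ of size at most $\chi_0$ (contained in some $M_{i'}$, $i' < \mu^+$, again by regularity) over which this restriction---hence every $q \rest M_i$ for $i < \mu^+$, by restriction monotonicity---does not split. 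Enlarging the base to $A \cup A'$ (harmless for both non-$R$-splitting and nonsplitting, by the monotonicity axioms), the truncated sequence is genuinely Morley for $q$ over that base, and its length $\mu^+ > \chi + \left(|B| + |L| + 2\right)^{<\kappa}$ is all that Lemma \ref{average-realize} needs; your insistence on a sequence of length at least $\lambda$ buys nothing. Without this (or an equivalent) device, your proof is incomplete precisely at its central step.
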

\begin{proof}
  By Hypothesis \ref{sat-hyp}, it is enough to check that $M$ is $\lambda$-saturated for types of length one. Let $p \in \Ss (B)$, $B \subseteq |M|$ have size less than $\lambda$. Let $q$ be an extension of $p$ to $\Ss (M)$. If $q$ is algebraic, then $p$ is realized inside $M$, so assume $q$ is not algebraic. Let $\seq{M_i : i < \delta}$ be as given by (\ref{q-cond}) for $q$. Let $R$ be a splitting-like notion for which there is $i < \delta$ and $A \subseteq |M_i|$ such that $q$ does not $R$-split over $A$ and $|A| \le \chi_0$. Without loss of generality, $i = 0$.

  Let $\mu := \left(\chi + |B|\right)^{<\kappa}$ (or take $\mu < \lambda$ such that $\mu \ge \chi + |B|$ and $\mathcal{N}$ is stable in $\mu$). Note that $\mu < \lambda$. For $i < \mu^+$, let $a_i \in |M_{i + 1}|$ realize $q \rest M_i$. By cofinality considerations, $\bigcup_{i < \mu^+} M_i$ is $\chi^+$-saturated. By Fact \ref{more-shelah-facts}, there exists $i < \mu^+$ and $A' \subseteq |M_i|$ such that $q \rest \bigcup_{i < \mu^+} M_i$ does not split over $A'$. By some renaming we can assume without loss of generality that $A' = A$. It is now easy to check that $\BI := \seq{a_i : i < \mu^+}$ is Morley for $q$ over $A$, as witnessed by $\seq{M_i : i < \mu^+}$.

  By Lemma \ref{av-eq-monot}, $\Av (\BI / M) = q$, and so in particular $\Av (\BI / B) = q \rest B = p$. By Lemma \ref{average-realize}, $p$ is realized by an element of $\BI \subseteq |M_0| \subseteq |M|$, as needed. 
\end{proof}



\section{Translating to AECs} \label{mainthm-sec}

To translate the result of the previous section to AECs, we will use the \emph{Galois Morleyization} of an AEC, a tool introduced in \cite{sv-infinitary-stability-afml}: Essentially, we expand the language of the AEC with a symbol for each Galois type. With enough tameness, Galois types then become syntactic.

\begin{defin}[3.3 in \cite{sv-infinitary-stability-afml}] \label{def-galois-m}
Let $K$ be an AEC and let $\kappa$ be an infinite cardinal. Define an (infinitary) expansion $\bigL$ of $L (K)$ by adding a relation symbol $R_p$ of arity $\ell (p)$ for each $p \in \gS^{<\kappa} (\emptyset)$. Expand each $N \in K$ to a $\bigL$-structure $\bigN$ by specifying that for each $\ba \in \bigN$, $R_p^{\bigN} (\ba)$ holds exactly when $\gtp (\ba / \emptyset; N) = p$. We write $\bigKp{\kappa}$ for $\bigK$. We call $\bigKp{\kappa}$ the \emph{$(<\kappa)$-Galois Morleyization} of $K$.
\end{defin} 
\begin{remark}\label{bigk-size}
  Let $K$ be an AEC and $\kappa$ be an infinite cardinal. Then $|L (\bigKp{\kappa})| \le |\gS^{<\kappa} (\emptyset)| + |L| \le 2^{<(\kappa + \LS (K)^+)}$.
\end{remark}

\begin{fact}[3.16 in \cite{sv-infinitary-stability-afml}]\label{galois-transl}
  Let $K$ be a $(<\kappa)$-tame AEC, and let $M \lea N_\ell$, $a_\ell \in |N_\ell|$, $\ell = 1,2$. Then $\gtp (a_1 / M; N_1) = \gtp (a_2 / M; N_2)$ if and only if\footnote{Recall that $\tp_{q\bigL_{\kappa, \kappa}}$ stands for quantifier-free $L_{\kappa, \kappa}$-type.} $\tp_{q\bigL_{\kappa, \kappa}} (a_1 / M; \widehat{N_1}) = \tp_{q\bigL_{\kappa, \kappa}} (a_2 / M; \widehat{N_2})$.

  Moreover the left to right direction does not need tameness: if $M \lea N_\ell$, $\ba_\ell \in \fct{<\infty}{|N_\ell|}$, $\ell = 1,2$, and $\gtp (\ba_1 / M; N_1) = \gtp (\ba_2 / M; N_2)$, then $\tp_{q\bigL_{\kappa, \kappa}} (\ba_1 / M; \widehat{N_1}) = \tp_{q\bigL_{\kappa, \kappa}} (\ba_2 / M; \widehat{N_2})$.
\end{fact}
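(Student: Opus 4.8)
The plan is to prove the two implications separately, doing the tameness-free forward direction first since it underlies the stated ``moreover''. Throughout I would lean on two basic properties of Galois types in an abstract class: \emph{invariance} under $K$-embeddings (if $f : N \to N'$ is a $K$-embedding then $\gtp(\ba / M_0; N) = \gtp(f[\ba] / f[M_0]; N')$), and the fact that equality of Galois types is by definition the transitive closure of the single-amalgam relation, hence an equivalence relation.

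For the forward direction, assume $\gtp(\ba_1 / M; N_1) = \gtp(\ba_2 / M; N_2)$. Since equality of quantifier-free $\bigL_{\kappa, \kappa}$-types is transitive, I would first reduce to the case where the two Galois types are \emph{directly} equal: there is $N \in K$ and $K$-embeddings $f_\ell : N_\ell \to N$ fixing $M$ pointwise with $f_1[\ba_1] = f_2[\ba_2]$. By induction on quantifier-free $\bigL_{\kappa, \kappa}$-formula complexity (negations and $(<\kappa)$-ary conjunctions and disjunctions pass through automatically) it then suffices to treat atomic formulas $\psi(\bx, \bar m)$ with $\bar m \in M$. For the original $L$-atomic $\psi$ this is immediate, since the $f_\ell$ are honest $L$-embeddings agreeing on $M$ and sending $\ba_1, \ba_2$ to the common tuple $f_1[\ba_1] = f_2[\ba_2]$. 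For a new symbol $R_p$, the statement $\widehat{N_1} \models R_p[\ba_1, \bar m]$ says exactly that a fixed tuple built from $\ba_1$ and $\bar m$ realizes the Galois type $p$ over $\emptyset$ in $N_1$; applying $f_1$ and invariance computes this same Galois type inside $N$, and the analogous computation through $f_2$ agrees, so $\widehat{N_1} \models R_p[\ba_1, \bar m]$ if and only if $\widehat{N_2} \models R_p[\ba_2, \bar m]$. This handles arbitrary $\ba_\ell \in \fct{<\infty}{|N_\ell|}$ with no appeal to tameness.

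For the converse (where $a_\ell$ are single elements and $K$ is $(<\kappa)$-tame), the crux is a \emph{reduction lemma}: if $M_0 \lea N_\ell$ is a common submodel enumerated by a tuple $\bar m$ of length $< \kappa$ and $\gtp(a_1 \bar m / \emptyset; N_1) = \gtp(a_2 \bar m / \emptyset; N_2)$, then $\gtp(a_1 / M_0; N_1) = \gtp(a_2 / M_0; N_2)$. I would prove this first for a single amalgam $f_1, f_2$ as above: here $f_1[\bar m] = f_2[\bar m]$ forces $f_1 \rest M_0 = f_2 \rest M_0$, so both maps carry $a_\ell$ to the common element $f_1[a_1] = f_2[a_2]$ over the common image $f_1[M_0] = f_2[M_0]$, and invariance closes the argument; the general case follows by propagating along the defining chain and using transitivity. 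Granting the lemma, fix $M_0 \lea M$ with $\|M_0\| < \kappa$, enumerate it as $\bar m$, and set $p := \gtp(a_1 \bar m / \emptyset; N_1) \in \gS^{<\kappa}(\emptyset)$. Then $R_p(\bx, \bar m)$ belongs to the quantifier-free $\bigL_{\kappa, \kappa}$-type of $a_1$ over $M$, hence by hypothesis to that of $a_2$, giving $\gtp(a_2 \bar m / \emptyset; N_2) = p$; the reduction lemma then yields $\gtp(a_1 / M_0; N_1) = \gtp(a_2 / M_0; N_2)$. As $M_0 \lea M$ of size $< \kappa$ was arbitrary, $(<\kappa)$-tameness upgrades this to equality over $M$.

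I expect the main obstacle to be the reduction lemma together with the fact that, in a general abstract class without amalgamation, Galois-type equality is only the transitive closure of the single-amalgam relation. Verifying that each link of that chain carries one submodel enumeration to another (so that invariance can be applied at every step) and that these steps compose correctly is the one place demanding genuine care; the remainder is bookkeeping on atomic formulas and a single invocation of tameness.
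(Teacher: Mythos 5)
Two preliminary remarks. First, the paper does not prove this statement: it quotes it as a Fact from \cite{sv-infinitary-stability-v3} (Theorem 3.16 there), so your attempt must be compared against the proof in that reference. Second, your reconstruction does follow essentially the same route as that proof: the tameness-free direction via the observation that a $K$-embedding induces an $\bigL$-embedding of Galois Morleyizations (because Galois types over $\emptyset$ are invariant under isomorphisms and monotone in the ambient model, so the predicates $R_p$ are preserved both ways), and the converse via the correspondence between Galois types over a common parameter set and Galois types over $\emptyset$ of the concatenated enumerations --- your ``reduction lemma'' --- followed by one application of $(<\kappa)$-tameness. The point you flag as delicate (propagating along the transitive closure of the atomic amalgamation relation, renaming at each link so that the enumerated set is fixed pointwise; injectivity of embeddings guarantees all intermediate tuples have the same equality pattern, so the renamings exist) is exactly the content of the corresponding lemma in the reference, and your treatment of it is correct.

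There is, however, one concrete flaw in your final step. You quantify over submodels $M_0 \lea M$ with $\|M_0\| < \kappa$ and then invoke $(<\kappa)$-tameness. But the statement concerns an arbitrary \emph{abstract class}: there is no L\"owenheim--Skolem axiom, so $M$ may have no $\lea$-submodels of size less than $\kappa$ at all, in which case your quantification is vacuous and yields nothing. Correspondingly, $(<\kappa)$-tameness in this framework (following the preliminaries of \cite{sv-infinitary-stability-v3}, where Galois types over arbitrary \emph{sets} are defined) says that distinct types over $M$ must differ over some subset $A \subseteq |M|$ of size less than $\kappa$, not over some small submodel; knowing that the restrictions agree on all small submodels is therefore too weak to conclude $\gtp(a_1/M;N_1) = \gtp(a_2/M;N_2)$. (In an AEC with $\LS(K) < \kappa$ the two formulations coincide, which is presumably the source of the slip, but the Fact is stated at the abstract-class level of generality, where they do not.) The repair is immediate: run your argument with an arbitrary subset $A_0 \subseteq |M|$ of size less than $\kappa$ enumerated by $\bar{m}$, and note that nothing in it --- in particular nothing in the reduction lemma, whose renaming argument never uses any closure property of the range of $\bar{m}$ --- requires $\bar{m}$ to enumerate a model. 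With that one substitution your proof is complete and agrees with the cited one.
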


Note that this implies in particular that (if $K$ is $(<\kappa)$-tame and has amalgamation) the Galois version of saturation and stability coincide with their syntactic analog in $\bigKp{\kappa}$. There is also a nice correspondence between the syntactic version of the order property defined at the beginning of Section \ref{averages-sec} and Shelah's semantic version \cite[4.3]{sh394}:

\begin{defin}\label{def-op}
  Let $\alpha$ and $\mu$ be cardinals and let $K$ be an AEC. A model $M \in K$ has the \emph{$\alpha$-order property of length $\mu$} if there exists $\seq{\ba_i : i < \mu}$ inside $M$ with $\ell (\ba_i) = \alpha$ for all $i < \mu$, such that for any $i_0 < j_0 < \mu$ and $i_1 < j_1 < \mu$, $\gtp (\ba_{i_0} \ba_{j_0} / \emptyset; N) \neq \gtp (\ba_{j_1} \ba_{i_1} / \emptyset; N)$.

  $M$ has the \emph{$(<\alpha)$-order property of length $\mu$} if it has the $\beta$-order property of length $\mu$ for some $\beta < \alpha$. $M$ has the \emph{order property of length $\mu$} if it has the $\alpha$-order property of length $\mu$ for some $\alpha$.

  \emph{$K$ has the $\alpha$-order of length $\mu$} if some $M \in K$ has it. \emph{$K$ has the  order property} if it has the order property for every length.
\end{defin}

\begin{fact}[4.4 in \cite{sv-infinitary-stability-afml}]\label{op-transl}
    Let $K$ be an AEC. Let $\bigK := \bigKp{\kappa}$. If $\bigN \in \bigK$ has the (syntactic) order property of length $\chi$, then $N$ has the (Galois) $(<\kappa)$-order property of length $\chi$. Conversely, if $\chi \ge 2^{<(\kappa + \LS (K)^+)}$ and $N$ has the (Galois) $(<\kappa)$-order property of length $(2^{\chi})^+$, then $\bigN$ has the (syntactic) order property of length $\chi$.
\end{fact}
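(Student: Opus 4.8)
The statement splits into two implications; the plan is to treat them separately, using the Galois Morleyization correspondence \cite[Theorem 3.16]{sv-infinitary-stability-v3} for the first and a partition argument for the second.

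\emph{Forward direction (syntactic $\Rightarrow$ Galois).} Suppose $\bigN$ has the syntactic order property of length $\chi$, witnessed by a quantifier-free $\phi (\bx, \by)$ and a sequence $\seq{\ba_i : i < \chi}$ of common length $\alpha := \ell (\ba_i) < \kappa$. I claim the same sequence witnesses the Galois $\alpha$-order property, which I would prove by invoking only the tameness-free half of \cite[Theorem 3.16]{sv-infinitary-stability-v3}: over $\emptyset$, equal Galois types yield equal quantifier-free $\bigL_{\kappa, \kappa}$-types. Indeed, if the sequence failed to witness it, there would be $i_0 < j_0$ and $i_1 < j_1$ with $\gtp (\ba_{i_0} \ba_{j_0} / \emptyset; N) = \gtp (\ba_{j_1} \ba_{i_1} / \emptyset; N)$. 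By the correspondence these tuples satisfy the same quantifier-free $\bigL_{\kappa, \kappa}$-formulas, in particular they agree on $\phi$; but $\models \phi [\ba_{i_0}, \ba_{j_0}]$ (as $i_0 < j_0$) while $\not\models \phi [\ba_{j_1}, \ba_{i_1}]$ (as $i_1 < j_1$), a contradiction. Since $\alpha < \kappa$, this is the $(<\kappa)$-order property of length $\chi$; note that no tameness is needed.

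\emph{Converse (Galois $\Rightarrow$ syntactic).} Here the plan is a Ramsey-style homogenization. Let $\seq{\ba_i : i < (2^\chi)^+}$ witness the Galois $(<\kappa)$-order property, with $\ell (\ba_i) = \alpha < \kappa$. For $i < j$ I would color the pair by $c (i, j) := (\gtp (\ba_i \ba_j / \emptyset; N), \gtp (\ba_j \ba_i / \emptyset; N))$, recording the Galois types of the increasing and decreasing concatenations. By Remark \ref{bigk-size} the number of Galois types over $\emptyset$ of length $2\alpha < \kappa$ is at most $2^{<(\kappa + \LS (K)^+)} \le \chi$, so $c$ uses at most $\chi$ colors. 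Applying Erd\H{o}s--Rado, $\left(2^\chi\right)^+ \to (\chi^+)^2_\chi$, then yields a homogeneous $H$ of size $\chi^+$: there are Galois types $p^+, p^-$ with $\gtp (\ba_i \ba_j / \emptyset; N) = p^+$ and $\gtp (\ba_j \ba_i / \emptyset; N) = p^-$ for all $i < j$ in $H$. Taking any $i < j$ in $H$ and applying the Galois order property with $i_0 = i_1 = i$, $j_0 = j_1 = j$ forces $p^+ \neq p^-$.

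Finally I would set $\phi (\bx, \by) := R_{p^+} (\bx \by)$, the Morleyization symbol for $p^+$ applied to the concatenation. For $i, j \in H$ this gives $\models \phi [\ba_i, \ba_j]$ iff $\gtp (\ba_i \ba_j / \emptyset; N) = p^+$ iff $i < j$: the case $i < j$ gives $p^+$, the case $i > j$ gives $p^- \neq p^+$, and the diagonal $i = j$ requires $\gtp (\ba_i \ba_i / \emptyset; N) \neq p^+$, which holds because the equality pattern of coordinates is an invariant of the Galois type (distinctness of the $\ba_i$ for $i \in H$ itself follows from $p^+ \neq p^-$). Relabeling any $\chi$ elements of $H$ then witnesses the syntactic order property of length $\chi$. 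The main obstacle I anticipate is bookkeeping rather than conceptual: pinning down the color count so that the blow-up is exactly $(2^\chi)^+$, and disposing of the diagonal case $i = j$ via the equality-type invariance noted above.
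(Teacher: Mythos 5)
The paper itself gives no proof of this statement---it is imported verbatim as a Fact from \cite{sv-infinitary-stability-v3} (Proposition 4.4 there)---so the only meaningful comparison is with that source, whose argument yours essentially reproduces. Your proposal is correct and takes the same route: the forward direction uses only the tameness-free half of the Galois Morleyization correspondence (satisfaction of the predicates $R_p$ is an invariant of Galois types over $\emptyset$), and the converse is the standard Erd\H{o}s--Rado homogenization $\left(2^{\chi}\right)^+ \to (\chi^+)^2_{\chi}$, valid since the number of colors is at most $|\gS^{<\kappa} (\emptyset)|^2 \le \chi$ by the hypothesis $\chi \ge 2^{<(\kappa + \LS (K)^+)}$, followed by taking $\phi (\bx, \by) := R_{p^+} (\bx \by)$; your treatment of the two non-obvious details (that $p^+ \neq p^-$ forces the $\ba_i$ to be pairwise distinct, and that the diagonal case $i = j$ is excluded because equality patterns of coordinates are Galois-type invariants) is sound.
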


We will use Facts \ref{galois-transl} and \ref{op-transl} freely in this section. We will also use the following results about stability and the order property:

\begin{fact}[4.5 and 4.13 in \cite{sv-infinitary-stability-afml}]\label{op-facts} 
  Let $K$ be an $(<\kappa)$-tame AEC with amalgamation. The following are equivalent:
  \begin{enumerate}
  \item $K$ is stable in some $\lambda \ge \kappa + \LS (K)$.
  \item\label{op-facts-2b} There exists $\mu \le \lambda_0 < \hanfs{\kappa + \LS (K)^+}$ (see Notation \ref{set-thy-notation}) such that $K$ is stable in any $\lambda \ge \lambda_0$ with $\lambda = \lambda^{<\mu}$.
  \item $K$ does not have the order property.
  \item There exists $\chi < \hanfs{\kappa + \LS (K)^+}$ such that $K$ does not have the $(<\kappa)$-order property of length $\chi$.
  \end{enumerate}
\end{fact}

It remains to find an independence notion to satisfy condition (\ref{lc-hyp}) in Theorem \ref{union-sat-av}. The splitting-like notion $R$ there will be given by the following:

\begin{defin}
  Let $K$ be an AEC and let $\kappa$ be an infinite cardinal. For $p \in \gS^{<\infty} (B; N)$ and $A \subseteq B$, say $p$ \emph{$\kappa$-explicitly does not split over $A$} if whenever $p = \gtp (\bc / B; N)$, for any $\bb, \bb' \in \fct{<\kappa}{B}$, if $\gtp (\bb / A; N) = \gtp (\bb' / A; N)$, then $\tp_{q\bigL_{\kappa, \kappa}} (\bc \bb / A; N) = \tp_{q\bigL_{\kappa, \kappa}} (\bc \bb' / A; N)$, where $\bigL = L (\bigKp{\kappa})$.
\end{defin}
\begin{remark}
  This is closely related to explicit nonsplitting defined in \cite[3.13]{bgkv-apal}. The definition there is that $p$ explicitly does not split if and only if it $\kappa$-explicitly does not split for all $\kappa$. When $K$ is fully $(<\kappa)$-tame and short (see \cite[3.3]{tamelc-jsl}), this is equivalent to just asking for $p$ to $\kappa$-explicitly not split.
\end{remark}
\begin{remark}[Syntactic invariance]\label{syn-invariance}
  Let $\bigK := \bigKp{\kappa}$. Assume $\tp_{q\bigL_{\kappa, \kappa}} (\bc / B; N) = \tp_{q\bigL_{\kappa, \kappa}} (\bc' / B; N)$ and $\gtp (\bc / B; N)$ $\kappa$-explicitly does not split over $A \subseteq B$. Then $\gtp (\bc' / B; N)$ $\kappa$-explicitly does not split over $A$.
\end{remark}

We will use the following definition of an independence relation, which appears implicitly in \cite[4.8]{ss-tame-jsl}.

\begin{defin}
  Let $K$ be an AEC with amalgamation and let $\lambda \ge \LS (K)$ be such that $\K$ is $\lambda$-tame and stable in $\lambda$. For $M \lea N$ with $M$ $\lambda^+$-saturated, we say that $p \in \gS (N)$ \emph{does not $\lambda$-fork over $M$} if there exists $M_0 \in \K_\lambda$ such that $M_0 \lea M$ and $p$ does not $\lambda$-split over $M_0$ (that is \cite[I.3.2]{sh394}, whenever $N_\ell$, $\ell = 1,2$, are of size $\lambda$ such that $M_0 \lea N_\ell \lea N$ and $f: N_1 \cong_{M_0} N_2$, we have that $f(p \rest N_1) = p \rest N_2$). We write $a \nf_M^{\lambdanf} N$ to say that $\gtp (a / N)$ does not $\lambda$-fork over $M$ and will apply the definition of the properties Definition \ref{loc-card-def} and Fact \ref{coheir-props-fact} to it.
\end{defin}

\begin{fact}[\S4 and \S5 in \cite{ss-tame-jsl}]\label{lambda-forking-props}
  Let $K$ be an AEC with amalgamation and let $\lambda \ge \LS (K)$ be such that $\K$ is $\lambda$-tame, stable in $\lambda$, and has no maximal models in $\lambda$.

  Then $\lambda$-nonforking satisfies invariance, monotonicity, transitivity (i.e.\ if $M_1 \lea M_2 \lea M_3$ are such that $M_1$ and $M_2$ are $\lambda^+$-saturated, $p \in \gS (M_3)$, $p$ does not $\lambda$-fork over $M_2$, $p \rest M_2$ does not $\lambda$-fork over $M_1$, then $p$ does not $\lambda$-fork over $M_1$), and uniqueness. Moreover $\slc{1} (\nf^{\lambdanf} \rest \Ksatp{\lambda^+}) = \lambda^{++}$.
\end{fact}

We recall the definition of superstability from \cite[10.1]{indep-aec-apal} using local character of nonsplitting. Note that it coincides with the first-order definition (see \cite[10.9]{indep-aec-apal}) and is equivalent to the definition implicit in \cite{gvv-mlq, ss-tame-jsl} and explicit in \cite[7.12]{grossberg2002}.

\begin{defin}[Superstability]\label{ss-def}
  Let $K$ be an AEC.

  \begin{enumerate}
    \item For $M, N \in K$, say $N$ is \emph{universal over $M$} if and only if $M \lea N$ and whenever we have $M' \gea M$ such that $\|M'\| = \|M\|$, then there exists $f: M' \xrightarrow[M]{} N$.
    \item $K$ is $\lambda$-superstable if:
      \begin{enumerate}
      \item $\LS (K) \le \lambda$ and $\K_\lambda \neq \emptyset$.
      \item $\K_\lambda$ has amalgamation, joint embedding, and no maximal models.
      \item $\K$ is stable in $\lambda$.
      \item $\K$ has no long splitting chains in $\lambda$: for any limit $\delta < \lambda^+$ and increasing continuous $\seq{M_i : i \le \delta}$ in $\K_\lambda$ with $M_{i + 1}$ universal over $M_i$ for all $i < \delta$, and any $p \in \gS (M_\delta)$, there exists $i < \delta$ such that $p$ does not $\lambda$-split over $M_i$.
      \end{enumerate}
  \end{enumerate}
\end{defin}

Note that superstability implies local character of $\lambda$-forking, and superstability transfers up assuming tameness:

\begin{fact}\label{ss-fact}
  Let $\K$ be an AEC with amalgamation that is $\lambda$-tame and $\lambda$-superstable.

  \begin{enumerate}
  \item \cite[4.11]{ss-tame-jsl} $\clc{1} (\nf^{\lambdanf} \rest \Ksatp{\lambda^+}) = \aleph_0$.
  \item $\K$ is $\lambda'$-superstable for all $\lambda' \ge \lambda$.
  \end{enumerate}
\end{fact}

The next result imitates \cite[5.6]{bgkv-apal}:

\begin{lem}\label{nesp-transl}
  Let $K$ be an AEC with amalgamation and let $\lambda \ge \LS (K)$ be such that $\K$ is $\lambda$-tame, stable in $\lambda$, and has no maximal models in $\lambda$. Let $\kappa \le \lambda^+$.

  Let $M \lea N$ be given with $M$ $\lambda^+$-saturated. Let $p \in \gS (N)$. If $p$ does not $\lambda$-fork over $M$, then $p$ $\kappa$-explicitly does not split over $A$.
\end{lem}
\begin{proof}
  By definition of $\lambda$-nonforking, there exists $M_0 \lea M$ of size $\lambda$ such that $p$ does not $\lambda$-split over $M_0$. We will show that $p$ explicitly does not $\kappa$-split over $M_0$ which is enough by base monotonicity of explicit $\kappa$-nonsplitting.

  Work inside a monster model $\sea$ and write $p = \gtp (c / N)$. Let $\bb, \bb' \in \fct{<\kappa}{|N|}$ be such that $\gtp (b / M_0) = \gtp (\bb' / M_0)$. Let $f$ be an automorphism of $\sea$ fixing $M_0$ such that $f (\bb) = \bb'$. By invariance, $f (p)$ does not $\lambda$-split over $M_0$. Now using uniqueness of $\lambda$-splitting (see \cite[I.4.12]{vandierennomax}), $f (p \rest M_0\bb) = p \rest M_0 \bb'$. The result follows.
\end{proof}

The next technical lemma captures the essence of our translation:

\begin{lem}\label{aec-technical}
  Let $K$ be a $(<\kappa)$-tame AEC with amalgamation and no maximal models. Let $\chi_0$ be such that:

  \begin{enumerate}
    \item $\chi_0 \ge 2^{< (\kappa + \LS (K)^+)}$.
    \item $K$ does not have the $(<\kappa)$-order property of length $\chi_0^+$.
  \end{enumerate}

  Set $\chi := \left(2^{2^{\chi_0}}\right)^+$. Let $\lambda$ be such that:

  \begin{enumerate}
    \item $\lambda > \chi^+$.
    \item $K$ is stable in $\mu$ for unboundedly many $\mu < \lambda$.
  \end{enumerate}

  Let $\theta := \clc{1} (\nf^{\chinf} \rest \Ksatp{\chi^+})$. Then:
  
  \begin{enumerate}
  \item If $\seq{M_i : i < \delta}$ is an increasing chain of $\lambda$-saturated models and $\cf{\delta} \ge \theta$, then $\bigcup_{i < \delta} M_i$ is $\lambda$-saturated.
  \item If $M \in K$ is such that for any $q \in \gS (M)$ there exists $\seq{M_i : i < \delta}$ strictly increasing so that:
      \begin{enumerate}
        \item $\delta \ge \lambda$ and $\cf{\delta} \ge \theta$.
        \item $M = \bigcup_{i < \delta} M_i$.
        \item For all $i < \delta$, $M_i$ is $\chi^+$-saturated and $M_{i + 1}$ realizes $q \rest M_i$.
      \end{enumerate}

      Then $M$ is $\lambda$-saturated.
  \end{enumerate}
\end{lem}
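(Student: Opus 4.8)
The plan is to translate the statement into the syntactic framework of Section~\ref{averages-sec} via the Galois Morleyization and then invoke Theorems~\ref{union-sat-av} and~\ref{more-on-chain}. First I would set $\bigL := L(\bigKp{\kappa})$ and work inside a sufficiently large $\mathcal{N} \in K$, obtained by amalgamation so that it extends every model under consideration and realizes every type we need to realize, taking $\mathcal{S}$ to be the set of universes of the $K$-substructures of $\mathcal{N}$. By Remark~\ref{bigk-size} and the hypothesis $\chi_0 \ge 2^{< (\kappa + \LS(K)^+)}$ we have $(|\bigL| + 2)^{<\kappa} \le \chi_0$, so the standing hypotheses of the averages section hold: closure under chains and the Löwenheim--Skolem axiom (Hypothesis~\ref{model-hyp}) follow from the AEC axioms; Hypothesis~\ref{sat-hyp} follows from Remark~\ref{sat-rmk}.(3) together with the equivalence of syntactic and Galois saturation under tameness; the failure of the $(<\kappa)$-order property of length $\chi_0^+$ for $K$ yields, via the order-property correspondence, that $\mathcal{N}$ has no syntactic order property of length $\chi_0^+$; and $\chi = (2^{2^{\chi_0}})^+$ is exactly the value fixed in Hypothesis~\ref{midway-hyp}. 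Finally, stability of $K$ in $\mu$ translates to stability of $\mathcal{N}$ in $\mu$, so the cardinal-arithmetic clauses of both target theorems are met.

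The splitting-like notion furnishing the local-character condition of both theorems will be $\kappa$-explicit nonsplitting (equivalently, $\isnesp{\kappa}(K)$), and I would first check it is splitting-like in the sense of Definition~\ref{splitting-like-def}. That it extends nonsplitting is immediate, since $\gtp(\bb/A) = \gtp(\bb'/A)$ implies $\tp_{q\bigL_{\kappa,\kappa}}(\bb/A) = \tp_{q\bigL_{\kappa,\kappa}}(\bb'/A)$ by the tameness-free direction of the Galois Morleyization theorem. Weak uniqueness is proved exactly as Lemma~\ref{ns-uq}, using saturation to pull witnesses into $M$ and transferring formulas along equal Galois types over $A$. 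For monotonicity in the base, suppose $R(p,A)$ with $A \subseteq A'$ and take $\bb, \bb'$ with $\gtp(\bb/A') = \gtp(\bb'/A')$; for any parameter $\bar{a}'$ from $A'$, a $\lea$-map witnessing this equality of Galois types fixes $\bar{a}'$ (as $\bar{a}' \in A'$) and hence witnesses $\gtp(\bb\bar{a}'/A) = \gtp(\bb'\bar{a}'/A)$, so applying $R(p,A)$ to the longer tuples $\bb\bar{a}'$, $\bb'\bar{a}'$ yields the required equality $\tp_{q\bigL_{\kappa,\kappa}}(\bc\bb/A') = \tp_{q\bigL_{\kappa,\kappa}}(\bc\bb'/A')$.

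It remains to produce, for any $q \in \gS(M_\delta)$ (respectively $q \in \gS(M)$), a small base inside one of the $M_i$'s, which is where I would use $\is$. Since $\cf{\delta} \ge \clc{1}(\is)$ and the union of the $M_i$ is an admissible (possibly non-$K$) union in the sense of Definition~\ref{loc-card-def}, chain local character gives $i < \delta$ such that $q$ does not $\is$-fork over $M_i$; note each $M_i$ is $\chi^+$-saturated (hence $\chi_0$-saturated of size $> \chi_0$), so lies in the underlying class $\Ksatp{\chi_0}$ of $\is$. Applying $\slc{1}(\is) = \chi_0^+$ to $q \rest M_i$ produces $M^* \lea M_i$ in $\Ksatp{\chi_0}$ with $\|M^*\| \le \chi_0$ over which $q \rest M_i$ does not fork, and transitivity gives that $q$ does not $\is$-fork over $M^*$. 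Then Lemma~\ref{nesp-transl}, whose hypotheses (base monotonicity, transitivity, uniqueness of $\is_{\chi_0}$, the local character value $\chi_0^+$, and $\kappa \le \chi_0^+$, the last since $\chi_0 \ge 2^{<(\kappa + \LS(K)^+)} \ge \kappa$) all hold, shows that $q$ does not $\isnesp{\kappa}(K)$-fork over $M^*$, i.e.\ $q$ $\kappa$-explicitly does not split over $A := |M^*| \subseteq |M_i|$ with $|A| \le \chi_0$. This is precisely condition~(\ref{lc-hyp}) of Theorem~\ref{union-sat-av}, so part~(1) follows from that theorem; and in the setting of part~(2) it supplies the missing clause~(d) of condition~(\ref{q-cond}) of Theorem~\ref{more-on-chain}, so part~(2) follows from that theorem.

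I expect the main obstacle to be the verification that $\kappa$-explicit nonsplitting is splitting-like, and in particular base monotonicity: the conclusion of $\kappa$-explicit nonsplitting is about syntactic types over the base, so enlarging the base genuinely strengthens the conclusion, and it is the parameter-absorption argument above (rather than a naive restriction) that makes it go through without assuming any homogeneity of $\mathcal{N}$. A secondary point to get right is the bookkeeping that keeps all models involved inside the underlying class $\Ksatp{\chi_0}$ while combining the chain and set local-character cardinals through transitivity.
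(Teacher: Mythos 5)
Your proposal is correct and follows essentially the same route as the paper's proof: Galois Morleyization of a rich extension $N \gea M_\delta$, verification of Hypotheses \ref{model-hyp}, \ref{sat-hyp}, and \ref{midway-hyp}, taking $\kappa$-explicit nonsplitting as the splitting-like notion $R$, and then combining $\clc{1}(\is)$, $\slc{1}(\is) = \chi_0^+$, transitivity, and Lemma \ref{nesp-transl} to verify condition (\ref{lc-hyp}) of Theorem \ref{union-sat-av} (resp.\ clause (d) of Theorem \ref{more-on-chain} for part (2)). The only difference is that you spell out the base-monotonicity check for $R$ via parameter absorption, a step the paper dismisses as ``easy to check,'' and your argument for it is correct.
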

\begin{proof}
  We prove the first statement. The proof of the second is analogous but uses Theorem \ref{more-on-chain} instead of Theorem \ref{union-sat-av}. Set $M_\delta := \bigcup_{i < \delta} M_i$. Let $N \gea M_\delta$ be such that $N$ realizes all types in $\gS^{<\kappa} (M_\delta)$. We check that $M_\delta$ is $\lambda$-saturated in $N$. Let $\bigK := \bigKp{\kappa}$ be the $(<\kappa)$-Galois Morleyization of $K$. Let $\mathcal{N} := \bigN$. By $(<\kappa)$-tameness, it is enough to show that $\widehat{M_\delta}$ is (syntactically) $\lambda$-saturated in $\mathcal{N}$. Work inside $\mathcal{N}$ in the language of $\bigK$. We also let $\mathcal{S} := \{|M| \mid M \lea N\}$. Note that $\mathcal{S}$ satisfies Hypothesis \ref{model-hyp}.

  First observe that Hypothesis \ref{midway-hyp} holds as (Remark \ref{bigk-size}) $|L (\bigK)| \le 2^{<(\kappa + \LS (K)^+)}$, so $\chi_0$ has all the required properties. Also, Hypothesis \ref{sat-hyp} holds by \cite[II.1.14]{shelahaecbook}. Note that $\K$ is stable in $\chi$ by Fact \ref{more-shelah-facts}(\ref{stab-1}). By hypothesis, $\lambda > \chi^+$. We want to use Theorem \ref{union-sat-av}, and it remains to check that (\ref{lc-hyp}) there holds.

  For $A \subseteq B$ and $p \in \Ss^{<\infty} (B)$, define the relation $R (p, A)$ to hold if and only if $p = \tp (\bc / B)$ and $\gtp (\bc / B; N)$ $\kappa$-explicitly does not split over $A$. Note that this is well-defined by Remark \ref{syn-invariance}. We want to check that this is a splitting-like notion (Definition \ref{splitting-like-def}). By definition of $\kappa$-explicit nonsplitting, if $p \in \Ss^{<\infty} (B)$ does not split over $A \subseteq B$, then $R (p, A)$. Also, it is easy to check that $R$ satisfies the monotonicity axiom. It remains to check the weak uniqueness axiom. So let $M$ be $\mu := \left(|A| + (|L (\bigK)| + 2)^{<\kappa}\right)^+$-saturated, $A \subseteq |M| \subseteq B$, and for $\ell = 1,2$, $q_\ell \in \Ss^{<\infty} (B)$, $R (q_\ell, A)$ and $q_1 \rest M = q_2 \rest M$. Note that $M$ is also $\mu$-saturated in the Galois sense (by tameness and Remark \ref{lc-hyp}). Thus we can imitate the proof of Lemma \ref{ns-uq}, using Galois saturation instead of syntactic saturation to get $\bb'$ satisfying $\gtp (\bb' / A) = \gtp (\bb / A)$ (instead of just $\tp (\bb' / A) = \tp (\bb' / A)$ as there). The definition of $\kappa$-explicit nonsplitting then makes the proof go through.

  Now let $q \in \gS (M_\delta)$. By definition of $\theta$, there exists $i < \delta$ such that $q$ does not $\chi$-fork over $M_i$. Now by set local character, there exists $M \lea M_i$ of size $\chi^+$ such that $q \rest M_i$ does not $\chi$-fork over $M$. By transitivity, $q$ does not $\chi$-fork over $M$. By Lemma \ref{nesp-transl}, working syntactically inside $\mathcal{N}$, $q$ does not $R$-split over $M$. Thus (\ref{lc-hyp}) holds. Therefore $M_\delta$ is $\lambda$-saturated, as desired.
\end{proof}

We obtain the following result on chains of saturated models in stable AECs:

\begin{thm}\label{aec-average-stable}
  Let $K$ be a $(<\kappa)$-tame AEC with amalgamation, $\kappa \ge \LS (K)^+$. If $K$ is stable, then there exists $\chi_0 \le \lambda_0 < \hanfs{\kappa}$ (see Notation \ref{set-thy-notation}) satisfying the following property:

  If $\lambda \ge \lambda_0$ is such that $\mu^{\chi_0} < \lambda$ for all $\mu < \lambda$ (or just that $K$ is stable in $\mu$ for unboundedly many $\mu < \lambda$), then whenever  $\seq{M_i : i < \delta}$ is an increasing chain of $\lambda$-saturated models with $\cf{\delta} \ge \lambda_0$, we have that $\bigcup_{i < \delta} M_i$ is $\lambda$-saturated.
\end{thm}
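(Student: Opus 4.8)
The plan is to reduce everything to Lemma \ref{aec-technical}.(1), whose hypotheses I will arrange by a careful choice of the parameters $\chi_0$, $\chi$, $\lambda_0$ and of an auxiliary independence relation. Since $K$ is stable and $(<\kappa)$-tame with amalgamation, Fact \ref{op-facts}.(2) applies (stability gives condition (2)(a)), and it yields two pieces of data below the Hanf bound $\hanf{\kappa}$ (recall $\hanf{\kappa + \LS (K)} = \hanf{\kappa}$ since $\kappa \ge \LS (K)$): a length $\chi_1 < \hanf{\kappa}$ at which the $(<\kappa)$-order property fails, and a pair $\mu^\ast \le \lambda_0' < \hanf{\kappa}$ witnessing the stability spectrum, i.e.\ $K$ is stable in every $\mu \ge \lambda_0'$ with $\mu = \mu^{<\mu^\ast}$.

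First I would fix a cardinal $\lambda_1$ in which $K$ is stable, with $\lambda_1 \ge \max(\kappa, \lambda_0', \mu^\ast, \chi_1, 2^{<(\kappa + \LS (K)^+)})$ and $\lambda_1 < \hanf{\kappa}$; such a $\lambda_1$ exists by the stability spectrum (take a fixed point of $\mu \mapsto \mu^{<\mu^\ast}$ above $\lambda_0'$). Setting $\chi_0' := \lambda_1^+$, I note that $K$ is $\lambda_1$-tame (as $\lambda_1 \ge \kappa$ and $K$ is $(<\kappa)$-tame) and stable in $\lambda_1$, so Fact \ref{ns-weakly-good} produces a $(\le 1, \ge \chi_0')$-independence relation $\is = (\Ksatp{\chi_0'}, \nf)$ with base monotonicity, transitivity, uniqueness on $\is_{\chi_0'}$, and $\slc{1} (\is) = \lambda_1^{++} = (\chi_0')^+$. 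I then put $\chi := \left(2^{2^{\chi_0'}}\right)^+$ and \emph{define the theorem's parameters} by $\chi_0 := (\chi_0')^+$ and $\lambda_0 := \chi^{++}$. A routine $\beth$-computation gives $\chi_0 < \chi < \lambda_0 < \hanf{\kappa}$, so $\chi_0 \le \lambda_0 < \hanf{\kappa}$ as required, and $\chi_0'$ meets the two hypotheses on $\chi_0$ in Lemma \ref{aec-technical}: it dominates $2^{<(\kappa + \LS (K)^+)}$, and $K$ lacks the $(<\kappa)$-order property of length $(\chi_0')^+ > \chi_1$.

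Now let $\lambda \ge \lambda_0$ and a chain $\seq{M_i : i < \delta}$ of $\lambda$-saturated models with $\cf{\delta} \ge \chi_0$ be given. Since $\lambda \ge \lambda_0 = \chi^{++}$ we have $\lambda > \chi^+$, and since $\clc{1} (\is) \le \slc{1} (\is) = (\chi_0')^+ = \chi_0 \le \cf{\delta}$, the cofinality condition of Lemma \ref{aec-technical}.(1) holds. It remains to feed in ``$K$ is stable in $\mu$ for unboundedly many $\mu < \lambda$''. If that is the hypothesis, there is nothing to do. If instead $\mu^{<\chi_0} < \lambda$ for all $\mu < \lambda$, I would convert: $\chi_0$ is a (double) successor, hence regular, and $\chi_0 > \mu^\ast, \lambda_0'$, so for each large $\nu < \lambda$ the cardinal $\theta := \nu^{<\chi_0}$ satisfies $\theta = \theta^{<\chi_0} = \theta^{<\mu^\ast}$ and $\lambda > \theta \ge \nu \ge \lambda_0'$; by the stability spectrum $K$ is stable in $\theta$, giving stable cardinals cofinally below $\lambda$. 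With all hypotheses verified, Lemma \ref{aec-technical}.(1) yields that $\bigcup_{i < \delta} M_i$ is $\lambda$-saturated.

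The conceptual content is entirely in Lemma \ref{aec-technical} and its supporting facts; the real labor here is the cardinal bookkeeping. The main obstacle is to keep every chosen cardinal ($\lambda_1, \chi_0', \chi, \chi_0, \lambda_0$) simultaneously below $\hanf{\kappa}$ while meeting all constraints — in particular locating a stable cardinal $\lambda_1 < \hanf{\kappa}$ large enough to dominate the order-property and stability-spectrum thresholds and to serve as the predecessor $\chi_0' = \lambda_1^+$ of the underlying class of $\is$ — and to translate correctly between the cardinal-arithmetic form ($\mu^{<\chi_0} < \lambda$) and the stability form of the hypothesis on $\lambda$, which is exactly where the regularity of $\chi_0$ and the inequality $\mu^\ast \le \chi_0'$ are used.
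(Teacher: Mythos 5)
Your proposal is correct and follows essentially the same route as the paper's own proof: both use Fact \ref{op-facts} to extract an order-property threshold and the stability spectrum below $\hanf{\kappa}$, pick a stable cardinal under $\hanf{\kappa}$ dominating all thresholds (possible since $\hanf{\kappa}$ is a strong limit), apply Fact \ref{ns-weakly-good} at that cardinal, and finish via Lemma \ref{aec-technical}, converting the hypothesis $\mu^{<\chi_0} < \lambda$ into stability in unboundedly many cardinals below $\lambda$. The only difference is bookkeeping, and it is arguably in your favor: you set the theorem's $\chi_0$ to be $(\chi_0')^+ = \slc{1} (\is)$, one successor above the parameter fed into Lemma \ref{aec-technical}, so that $\clc{1} (\is) \le \slc{1} (\is) = \chi_0 \le \cf{\delta}$ follows from Proposition \ref{barleq-prop}.(1) (the underlying class $\Ksatp{\chi_0'}$ being a $\chi_0'$-AEC), whereas the paper takes the theorem's $\chi_0$ equal to the lemma's parameter $\chi_{00}^+$ and thus implicitly needs the slightly stronger bound $\clc{1} (\is) \le \chi_{00}^+$, which the facts quoted in the paper only give up to one more successor.
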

\begin{proof}
  Using Fact \ref{op-facts}, pick $\chi_0 \le \mu_0 < \hanfs{\kappa}$ such that:

  \begin{enumerate}
    \item $\chi_{0}^+ \ge 2^{<(\kappa + \LS (K)^+)} + \kappa^+$.
    \item $K$ is stable in any $\mu \ge \mu_0$ with $\mu = \mu^{\chi_{0}}$.
    \item $K$ does not have the $(<\kappa)$-order property of length $\chi_{0}^+$.
  \end{enumerate}

  Now set $\lambda_0 := (2^{2^{\chi_0}})^{+3}$ and apply Lemma \ref{aec-technical}.
\end{proof}

The statement becomes much nicer in superstable AECs:

\begin{thm}\label{aec-average-superstable}
  Let $K$ be a $(<\kappa)$-tame AEC with amalgamation, $\LS (K)^+ \le \kappa$. Let $\mu \ge \LS (\K)$ be a cardinal with $\mu^+ \ge \kappa$ and assume that $\K$ is $\mu$-superstable. Then there exists $\lambda_0 < \hanfs{\kappa} + \mu^{++}$ with $\lambda_0 > \mu$ such that that for any $\lambda \ge \lambda_0$:
  \begin{enumerate}
    \item $\Ksatp{\lambda}$ is an AEC with $\LS (\Ksat) = \lambda$.
    \item\label{aec-average-superstable-2} If $M \in \Ksatp{\lambda_0}$ is such that for any $q \in \gS (M)$ there exists $\seq{M_i : i < \lambda}$ a resolution of $M$ in $\Ksatp{\lambda_0}$ such that  $q \rest M_i$ is realized in $M_{i + 1}$ for all $i < \lambda$, then $M \in \Ksatp{\lambda}$.
  \end{enumerate}
\end{thm}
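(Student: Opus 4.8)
The plan is to reduce both clauses to Lemma~\ref{aec-technical}, reusing the parameter choices from the proof of Theorem~\ref{aec-average-stable} and exploiting the one feature special to the superstable case: the local character cardinal for chains collapses to $\aleph_0$, which removes every cofinality restriction. Since $K$ is $\mu$-$\ssp$ it is a $\mu$-tame (hence $(<\kappa)$-tame for $\kappa := \mu^+$) AEC with amalgamation, and by Fact~\ref{ss-stable} it is stable in every cardinal $\ge \mu$. Following the proof of Theorem~\ref{aec-average-stable} with $\kappa = \mu^+$, I would use Fact~\ref{op-facts} to fix $\chi_{00} \le \mu_0 < \hanf{\mu^+}$ with $\chi_{00}^+ \ge 2^{<(\kappa + \LS(K)^+)} + \kappa^+$, with $K$ omitting the $(<\kappa)$-order property of length $\chi_{00}^+$, and with $K$ stable in $\chi_{00}$ and in all $\mu' \ge \mu_0$ satisfying $\mu' = \mu'^{\chi_{00}}$; then set $\chi_0 := \chi_{00}^+$, $\chi := (2^{2^{\chi_0}})^+$ and $\lambda_0 := \mu_0^+ + (2^{2^{\chi_0}})^{+3}$. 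Because $\hanf{\mu^+} = \beth_{(2^{\mu^+})^+}$ has a limit index and is therefore closed under $\nu \mapsto 2^\nu$ and under successors, each of these cardinals, and hence $\lambda_0$, stays below $\hanf{\mu^+}$. As $\chi_{00} \ge \kappa > \mu$, $K$ is $\chi_{00}$-superstable, so Fact~\ref{ns-weakly-good} (with its $\lambda$ equal to $\chi_{00}$) produces a $(\le 1, \ge \chi_0)$-independence relation $\is = (\Ksatp{\chi_0}, \nf)$ with base monotonicity, transitivity, uniqueness for $\is_{\chi_0}$, $\slc{1}(\is) = \chi_0^+$, and crucially $\clc{1}(\is) = \aleph_0$. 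Thus $\is$ satisfies all the hypotheses of Lemma~\ref{aec-technical}, and every $\lambda \ge \lambda_0$ satisfies $\lambda > \chi^+$ while $K$ is stable in unboundedly many (indeed all) cardinals below $\lambda$.

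\textbf{Clause (2).} Fix $\lambda \ge \lambda_0$ and $M$ as hypothesized. This is a direct instance of Lemma~\ref{aec-technical}(2) with $\delta := \lambda$: the resolution $\seq{M_i : i < \lambda}$ is the given one, $\cf{\lambda} \ge \aleph_0 = \clc{1}(\is)$ holds automatically, each $M_i \in \Ksatp{\lambda_0}$ is $\chi^+$-saturated since $\lambda_0 > \chi^+$, and $M_{i+1}$ realizes $q \rest M_i$ by assumption (after discarding repetitions we may take the chain strictly increasing). Hence $M \in \Ksatp{\lambda}$.

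\textbf{Clause (1).} For $\lambda \ge \lambda_0$, Lemma~\ref{aec-technical}(1) (equivalently Corollary~\ref{cor-ss-aec}) together with $\clc{1}(\is) = \aleph_0$ shows that the union of \emph{any} increasing chain of $\lambda$-saturated models is $\lambda$-saturated, the cofinality clause being vacuous. By Remark~\ref{sat-rmk}(2), $\Ksatp{\lambda}$ is a $\lambda$-AEC with $\LS(\Ksatp{\lambda}) = \LS(K)^{<\lambda}$; the closure under chains just obtained upgrades its Tarski--Vaught axioms from cofinality $\ge \lambda$ to all cofinalities, so $\Ksatp{\lambda}$ is a genuine AEC. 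To conclude that $\LS(\Ksatp{\lambda}) = \lambda$, I would use stability in $\lambda$ (Fact~\ref{ss-stable}) and imitate \cite[Theorem III.3.12]{shelahfobook}: inside any $N \in \Ksatp{\lambda}$ and over a prescribed set of size $\le \lambda$, build a continuous increasing chain of $\lea$-submodels of size $\lambda$ realizing sufficiently many types, so that by the chain result its union is a $\lambda$-saturated $\lea$-submodel of $N$ of size exactly $\lambda$.

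\textbf{Main obstacle.} Nearly everything is an application of the results already assembled; the delicate points are (a) verifying that the parameter cascade $\chi_{00} \to \chi_0 \to \chi \to \lambda_0$ stays below $\hanf{\mu^+}$, which rests on closure of the Hanf function under exponentiation, and (b) the Löwenheim--Skolem computation of clause (1), where stability in $\lambda$ and the newly-proved closure under chains must be interleaved carefully so that the approximating models have size exactly $\lambda$ while the union is forced to be $\lambda$-saturated.
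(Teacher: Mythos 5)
Your proposal is correct and follows essentially the same route as the paper: both reduce the theorem to Lemma \ref{aec-technical}, with the parameters $\chi_{00}, \chi_0, \chi, \lambda_0$ taken from the proof of Theorem \ref{aec-average-stable}, the independence relation supplied by Fact \ref{ns-weakly-good}, superstability providing $\clc{1}(\is) = \aleph_0$ so that the cofinality restriction becomes vacuous, and the Löwenheim--Skolem computation done by imitating Shelah's first-order argument. The one divergence is in your favor: the paper invokes Fact \ref{ns-weakly-good} with its $\lambda$ equal to $\mu$, which yields a relation on $\Ksatp{\mu^+}$ with $\slc{1} = \mu^{++}$ and hence does not literally match the hypotheses of Lemma \ref{aec-technical} (whose $\chi_0$ must be at least $2^{<(\kappa + \LS(K)^+)}$), whereas your instantiation at $\lambda = \chi_{00}$, justified by the upward transfer of superstability in Fact \ref{ss-stable}, produces a relation on $\Ksatp{\chi_0}$ with $\slc{1}(\is) = \chi_0^+$, $\is_{\chi_0}$-uniqueness, and $\clc{1}(\is) = \aleph_0$ exactly as the lemma requires.
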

\begin{proof} \
  \begin{enumerate}
    \item We first show that any increasing union of $\lambda$-saturated models is saturated. Let $\lambda_{00} < \hanfs{\kappa}$ be as given by the proof of Theorem \ref{aec-average-stable} and let $\lambda_0 := \lambda_{00} + \mu^+$. By Fact \ref{ss-fact}, $\clc{1} (\nf^{\nfp{\lambda_0}} \rest \Ksatp{\lambda_0}) = \aleph_0$. Now apply Lemma \ref{aec-technical} (note that by Fact \ref{ss-fact}, $K$ is stable in any $\mu' \ge \mu$). To see that $\LS (\Ksatp{\lambda}) = \lambda$, imitate the proof of \cite[Theorem III.3.12]{shelahfobook}.
    \item Similar: use the second conclusion of Lemma \ref{aec-technical}.
  \end{enumerate}
\end{proof}


\section{On superstability in AECs}\label{ss-def-sec}

In the introduction to \cite{shelahaecbook}, Shelah points out the importance of finding a definition of superstability for AECs. He also remarks (p.~19) that superstability in AECs suffers from ``schizophrenia'': definitions that are equivalent in the first-order case might not be equivalent in AECs. In this section, we point out that Definition \ref{ss-def} implies several other candidate definitions of superstability. Recall from Fact \ref{ss-fact} that Definition \ref{ss-def} implies that the class is stable on a tail of cardinals. We will focus on five other definitions:

\begin{enumerate}
  \item For every high-enough $\lambda$, the union of any increasing chain of $\lambda$-saturated models is $\lambda$-saturated. This is the focus of this paper and is equivalent to first-order superstability by \cite[Theorem 13]{agchains}.
  \item The existence of a saturated model of size $\lambda$ for every high-enough $\lambda$. In first-order, this is an equivalent definition of superstability by the saturation spectrum theorem (Fact \ref{sat-spectrum}).
  \item The existence of a superlimit model of size $\lambda$ for every high-enough $\lambda$. This is the definition of superstability listed by Shelah in \cite[N.2.4]{shelahaecbook}. Recall that a model $M \in K_\lambda$ is \emph{superlimit} if it is universal, has an isomorphic proper extension in $K_\lambda$, and whenever $\seq{M_i  : i < \delta}$ is increasing in $K_\lambda$, $\delta < \lambda^+$, and $M_i \cong M$ for all $i < \delta$, then $\bigcup_{i < \delta} M_i \cong M$.
  \item The existence of a good $\lambda$-frame on a subclass of saturated models (e.g.\ for every high-enough $\lambda$). Recall that a good frame is essentially a forking-like notion for types of length one (see \cite[II.2.1]{shelahaecbook} for the formal definition). Good frames are \emph{the} central notion in \cite{shelahaecbook} and are described by Shelah as a ``bare bone'' definition of superstability. 
  \item The uniqueness of limit models of size $\lambda$ for every high-enough $\lambda$: Recall that a model $M$ is \emph{$(\lambda, \delta)$-limit over $M_0$} if $M_0 \lea M$ are in $K_\lambda$, $\delta < \lambda^+$ is a limit ordinal and there exists $\seq{M_i : i \le \delta}$ increasing continuous such that $M_\delta = M$ and $i < \delta$ implies $M_i \ltu M_{i + 1}$ (recall Definition \ref{ss-def}). We say $K_\lambda$ has \emph{uniqueness of limit models} if for any $M_0 \in K_\lambda$, any limit $\delta_1, \delta_2 < \lambda^+$, any $M_\ell$ which are $(\lambda, \delta_\ell)$-limit over $M_0$ are isomorphic over $M_0$. Uniqueness of limit models is central in \cite{sh394, shvi635, vandierennomax, nomaxerrata} and is further examined in \cite{gvv-mlq} (Theorem 6.1 there proves that the condition is equivalent to first-order superstability). These papers all prove the uniqueness under a categoricity (or no Vaughtian pair) assumption. In \cite[II.4.8]{shelahaecbook}, uniqueness of limit models is proven from a good frame (see also \cite[9.2]{ext-frame-jml} for a detailed writeup). This is used in \cite{bg-v11-toappear} to get eventual uniqueness of limit models from categoricity, but the authors have to make an extra assumption (the extension property for coheir). 
\end{enumerate}

Note that some easy implications between these definitions are already known (see for example \cite[2.3.12]{drueckthesis}). We now show that assuming amalgamation and tameness, if $K$ is superstable, then all five of these conditions hold. This gives an eventual version of \cite[Conjecture 4.2.5]{drueckthesis}. It also shows how to build a good frame without relying on categoricity (as opposed to all previous constructions, see \cite[II.3.7]{shelahaecbook}, \cite[7.3]{ss-tame-jsl}, or \cite[10.16]{indep-aec-apal}).

\begin{thm}\label{ss-def-implication}
  If $K$ is a $\mu$-tame, $\mu$-superstable AEC with amalgamation, then there exists $\lambda_0 < \hanf{\mu}$ such that for all $\lambda \ge \lambda_0$:

  \begin{enumerate}
    \item\label{ss-def-1} The union of any increasing chain of $\lambda$-saturated models in $K$ is $\lambda$-saturated.
    \item\label{ss-def-2} $K$ has a saturated model of size $\lambda$.
    \item\label{ss-def-3} $K$ has a superlimit model of size $\lambda$.
    \item\label{ss-def-4} There exists a type-full good $\lambda$-frame with underlying class $\Ksatp{\lambda}_\lambda$.
    \item\label{ss-def-5} $K_\lambda$ has uniqueness of limit models.
  \end{enumerate}
\end{thm}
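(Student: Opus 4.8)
The plan is to fix once and for all the threshold $\lambda_0 < \hanf{\mu^+}$ provided by Theorem \ref{aec-average-superstable} and to derive the five conditions in order, each leaning on the fact (from that theorem) that $\Ksatp{\lambda}$ is an AEC with $\LS (\Ksatp{\lambda}) = \lambda$. Condition (\ref{ss-def-1}) is then immediate: an AEC is closed under unions of $\lea$-increasing chains, which is exactly the assertion that a union of $\lambda$-saturated models is $\lambda$-saturated. For (\ref{ss-def-2}), I would first record that by Fact \ref{ss-stable} the class $K_{\ge \mu}$ has amalgamation, joint embedding, no maximal models, and is stable in every cardinal $\ge \mu$; building a $\lea$-increasing continuous chain of length $\lambda^+$ in which each successor realizes all Galois types over its predecessor (possible by stability in $\lambda^+$, keeping all sizes $\le \lambda^+$) yields a $\lambda^+$-saturated, hence $\lambda$-saturated, model, so $\Ksatp{\lambda} \neq \emptyset$. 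Applying the Löwenheim--Skolem axiom of the AEC $\Ksatp{\lambda}$ then produces a member of size exactly $\lambda$, i.e.\ a saturated model of size $\lambda$.

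For (\ref{ss-def-3}), I claim the saturated model $M$ of size $\lambda$ from (\ref{ss-def-2}) is superlimit. It is universal in $K_\lambda$ because saturated models are universal, and it has a proper isomorphic extension since $K_\lambda$ has no maximal models and any proper extension can be further extended to a saturated model of size $\lambda$ (unique up to isomorphism). The decisive clause is the chain condition: if $\seq{M_i : i < \delta}$ is $\lea$-increasing in $K_\lambda$ with $\delta < \lambda^+$ and each $M_i \cong M$, then $\bigcup_{i < \delta} M_i$ has size $\lambda$ and, by (\ref{ss-def-1}), is $\lambda$-saturated, hence a saturated model of size $\lambda$ and so isomorphic to $M$. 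Thus (\ref{ss-def-1}) feeds directly into (\ref{ss-def-3}).

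Condition (\ref{ss-def-4}) is the technical heart. The idea is to take the $(\le 1)$-independence relation $\is$ supplied by Fact \ref{ns-weakly-good} (which, under superstability, has base monotonicity, transitivity, uniqueness over the relevant models, and local character $\clc{1} (\is) = \aleph_0$), restrict its underlying class to $\Ksatp{\lambda}_\lambda$, and verify the type-full good $\lambda$-frame axioms for the resulting frame. Here (\ref{ss-def-1}) is precisely what guarantees that $\Ksatp{\lambda}_\lambda$ is an AEC in $\lambda$, i.e.\ closed under unions of chains of length $< \lambda^+$, which is one of the frame axioms; superstability supplies the local-character axiom; and the uniqueness clause of Fact \ref{ns-weakly-good}, read over saturated models, supplies frame-uniqueness. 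I expect the symmetry and extension axioms to be the main obstacle, since these are not handed to us by Fact \ref{ns-weakly-good} and must be extracted from the independence calculus; this is essentially the good-frame construction of \cite{indep-aec-v4}, now made unconditional by (\ref{ss-def-1}).

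Finally, for (\ref{ss-def-5}) I would reduce uniqueness of limit models to the statement that every limit model is saturated. A $(\lambda, \delta)$-limit over $M_0$ with $\cf{\delta} = \lambda$ is immediately $\lambda$-saturated, since any subset of size $<\lambda$ lands in a single $M_i$ and the next model is universal over it; the content lies in the case $\cf{\delta} < \lambda$, where I would invoke the good $\lambda$-frame from (\ref{ss-def-4}) together with (\ref{ss-def-1}) and the implication ``good frame $\Rightarrow$ uniqueness of limit models'' of \cite[Lemma II.4.8]{shelahaecbook} (see also \cite[Theorem 9.2]{ext-frame-jml}). Once all limit models over $M_0$ are known to be saturated of size $\lambda$ containing $M_0$, a back-and-forth using amalgamation and saturation shows they are isomorphic over $M_0$. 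The difficulty I anticipate here is bridging between the frame's underlying class of saturated models and limit models over an arbitrary, possibly non-saturated, base $M_0 \in K_\lambda$; this is exactly where the reduction ``limit models are saturated'', powered by the union theorem (\ref{ss-def-1}), does the essential work.
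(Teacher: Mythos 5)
Your treatment of items (\ref{ss-def-1})--(\ref{ss-def-4}) essentially matches the paper's: the paper also fixes $\lambda_0$ via Theorem \ref{aec-average-superstable}, gets (\ref{ss-def-1}) and (\ref{ss-def-2}) from $\Ksat$ being an AEC with $\LS (\Ksat) = \lambda$, proves (\ref{ss-def-3}) exactly as you do, and obtains (\ref{ss-def-4}) by directly citing \cite[Theorem 10.8.2c]{indep-aec-v4}, i.e.\ the good-frame construction you gesture at, so there is no need to re-verify the frame axioms by hand.

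The genuine gap is in (\ref{ss-def-5}), in your last step: ``once all limit models over $M_0$ are known to be saturated \dots a back-and-forth using amalgamation and saturation shows they are isomorphic over $M_0$.'' That back-and-forth does not exist: $M_0$ has size $\lambda$, while $\lambda$-saturation only controls types over sets of size strictly less than $\lambda$, so knowing both limits are saturated yields an abstract isomorphism $M_1 \cong M_2$ but gives no way to make it fix $M_0$ pointwise. Producing the isomorphism \emph{over the base} for limits of different cofinalities is precisely the hard content of uniqueness of limit models (this is why \cite{gvv-v3} and \cite[Lemma II.4.8]{shelahaecbook} need towers or nonforking machinery rather than a saturation back-and-forth). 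Relatedly, your appeal to \cite[Lemma II.4.8]{shelahaecbook} is misplaced: you invoke it to show that small-cofinality limit models are saturated, but that follows more simply from (\ref{ss-def-1}) (build a $\ltu$-increasing continuous chain of saturated models of the same length over the same base --- (\ref{ss-def-1}) keeps limit stages saturated --- and use the standard same-cofinality back-and-forth along the two chains); what II.4.8 is actually needed for is the over-the-base isomorphism. The paper's route is: pick $M_0' \gea M_0$ saturated; since the first link of each limit chain is universal over $M_0$, each $M_\ell$ is also a $(\lambda, \delta_\ell)$-limit over a copy of $M_0'$, so after renaming one may assume $M_0$ itself is saturated; then build $\ltu$-increasing continuous chains $\seq{M_i' : i \le \delta_\ell}$ inside $\Ksatp{\lambda}_\lambda$ (possible, and staying inside the saturated class, by (\ref{ss-def-1})), identify $M_\ell \cong_{M_0} M_{\delta_\ell}'$ by the same-cofinality back-and-forth, and finally apply uniqueness of limit models \emph{inside} $\Ksatp{\lambda}_\lambda$, which is exactly what the good frame of (\ref{ss-def-4}) together with II.4.8 provides. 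In short, the frame's nonforking, not saturation, powers the isomorphism over the base, and your final step must be replaced by that argument.
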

\begin{proof}
  Note that by Fact \ref{ss-fact}, $K_{\ge \mu}$ has no maximal models, joint embedding, and is stable in every cardinal. Let $\lambda_0 < \hanf{\mu}$ be as given by Theorem \ref{aec-average-superstable} and let $\lambda \ge \lambda_0$. Then $\Ksat$ is an AEC with $\LS (\Ksat) = \lambda$. Thus (\ref{ss-def-1}) and (\ref{ss-def-2}) hold. If $M$ is the saturated model of size $\lambda$, then it is easy to check that $M$ is superlimit: it is universal as $K_{\ge \mu}$ has joint embedding, it has a saturated proper extension of size $\lambda$ since $\LS (\Ksat) = \lambda$, and any increasing chain of saturated models in $K_\lambda$ of length less than $\lambda^+$ has a saturated union. Thus (\ref{ss-def-3}) holds. To see (\ref{ss-def-4}), use \cite[10.8(2c)]{indep-aec-apal}. 

  We are now ready to prove (\ref{ss-def-5}). As observed above, a good frame implies uniqueness of limit models. Thus $\Ksat_\lambda$ has uniqueness of limit models. It follows that $K_\lambda$ has uniqueness of limit models: Let $M_\ell$ be $(\lambda, \delta_\ell)$-limit over $M_0$, $\ell = 1,2$. Pick $M_0' \gea M_0$ in $\Ksat_\lambda$. By universality, $M_\ell$ is also $(\lambda, \delta_\ell)$-limit over some copy of $M_0'$, so after some renaming we can assume without loss of generality that $M_0 = M_0'$. For $\ell = 1,2$, build $\seq{M_i' : i \le \delta_\ell}$ increasing continuous such that for all $i < \delta_\ell$, $M_i' \in \Ksatp{\lambda}_\lambda$ and $M_i' \ltu M_{i + 1}'$. This is easy to do and by a back and forth argument, $M_\ell \cong_{M_0} M_{\delta_\ell}'$. By uniqueness of limit models in $\Ksat$, $M_{\delta_1}' \cong_{M_0} M_{\delta_2}'$. Composing the isomorphisms, we obtain that $M_1 \cong_{M_0} M_2$.
\end{proof}

\bibliographystyle{amsalpha}
\bibliography{union-saturated}

\end{document}